\newtheorem{theorem}{Theorem}[section] 
\newtheorem{proposition}[theorem]{Proposition}
\newtheorem{lemma}[theorem]{Lemma}
\newtheorem{corollary}[theorem]{Corollary}
\theoremstyle{definition}
\newtheorem{definition}[theorem]{Definition}
\newtheorem{notation}[theorem]{Notation}
\theoremstyle{remark}
\newtheorem{remark}[theorem]{Remark}
\numberwithin{equation}{section}
\newcounter{myenum}
\renewcommand\thetheorem{\@arabic\c@section.\@arabic\c@theorem}
\newcounter{subtheorem}
\renewcommand\thesubtheorem{\thetheorem.\@arabic\c@subtheorem}
\newcommand{\cal}{\mathcal}
\newcommand{\bb}{\mathbb}
\newcommand{\comment}[1]{}
\newcommand{\noi}{\noindent}
\newcommand{\xym}{\xymatrix}
\newcommand{\into}{\hookrightarrow}
\newcommand{\PP}{\bb{P}}
\newcommand{\AAA}{\bb{A}}
\newcommand{\A}{\mathbb{A}^1}
\DeclareMathOperator{\pic}{Pic}
\DeclareMathOperator{\Hom}{Hom}
\DeclareMathOperator{\spec}{Spec}
\DeclareMathOperator{\id}{id}
\DeclareMathOperator{\Sm}{{\bf{Sm}}}
\DeclareMathOperator{\Sch}{{\bf{Sch}}}
\newcommand{\oline}{\overline}
\DeclareMathOperator{\coker}{coker}
\begin{document}

\title{Cohomology of presheaves with oriented weak transfers}
\author{Joseph Ross}
\address{University of Southern California Mathematics Department, 3620 South Vermont Avenue, Los Angeles, California}
\urladdr{\url{http://www-bcf.usc.edu/~josephr/}}
\email{josephr@usc.edu}

%

\keywords{motives, presheaves with transfers, $K$-theory, correspondences}

\date{\today}

\maketitle
\begin{abstract} Over a field of characteristic zero, we establish the homotopy invariance of the Nisnevich cohomology of homotopy invariant presheaves with oriented weak transfers, and the agreement of Zariski and Nisnevich cohomology for such presheaves.  This generalizes a foundational result in Voevodsky's theory of motives.  The main idea is to find explicit smooth representatives of the correspondences 
which provide the input for Voevodsky's cohomological architecture. 
\end{abstract}

\tableofcontents

\section{Introduction}
The goal of the theory of motives is the construction of a universal cohomology theory for algebraic varieties.  While an abelian category of mixed motives has not yet been constructed, Voevodsky has defined a triangulated category of effective motivic complexes over a field $k$, denoted $DM^{eff}_-(k)$, which has the properties one would expect of the derived category of motives \cite{TRICAT}.  Moreover, this category has proved useful in its own right, for example in Voevodsky's celebrated proof of the Milnor conjecture.

This approach succeeds in large part by transforming questions about the geometric category
$DM^{eff}_-(k)$ into questions about a category of a more homological nature.  
First, $DM^{eff}_-(k)$ is a subcategory of $D^-(Sh_{Nis}(SmCor(k)))$, a category of complexes of sheaves.  
Now the crucial ingredient is Voevodsky's functor $\textbf{R}C : D^-(Sh_{Nis}(SmCor(k))) \to DM^{eff}_-(k)$ which is left adjoint to the inclusion and identifies  $DM^{eff}_-(k)$ with the localization of $D^-(Sh_{Nis}(SmCor(k)))$ at a collection of morphisms expressing the invertibility of the affine line.
The construction of $\textbf{R}C$ itself depends critically on the cohomological theory of presheaves with transfers, in particular the result that (over a perfect field $k$), the Zariski cohomology and Nisnevich cohomology of a homotopy invariant presheaf with transfers coincide, and this cohomology is itself homotopy invariant \cite{CTPST}.  The main result of this paper generalizes Voevodsky's cohomological result to a class of presheaves with a weaker form of transfer structure.

\begin{theorem}[\ref{PSOWT}, \ref{cohomology is homotopy invariant}, \ref{Zar = Nis}]
Let $k$ be a field of characteristic zero, and let $\cal{F}$ be a homotopy invariant presheaf of abelian groups on $\Sm / k$ with oriented weak transfers for affine varieties.  Then the Nisnevich cohomology presheaves $H^n_{Nis}(-, \cal{F}_{Nis})$ are homotopy invariant, and Zariski cohomology coincides with Nisnevich cohomology for $\cal{F}$.
\end{theorem}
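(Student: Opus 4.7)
The plan is to follow Voevodsky's cohomological architecture from \cite{CTPST} line by line, replacing each invocation of the finite correspondence formalism by an appeal to an \emph{explicit} smooth affine correspondence to which the oriented weak transfer structure on $\cal{F}$ can be applied. The point is that Voevodsky's proof ultimately depends on only a short list of correspondences (associated with standard triples, distinguished squares, and the relative Picard group of a relative curve), so it suffices to produce smooth affine representatives for exactly these.

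First I would invoke Theorem \ref{PSOWT} to guarantee that $\cal{F}_{Nis}$ is again a homotopy invariant presheaf with oriented weak transfers for affine varieties; this reduces both assertions to statements about $\cal{F}_{Nis}$ rather than $\cal{F}$. Next, following the skeleton of \cite{CTPST}, I would reduce the homotopy invariance of $H^n_{Nis}(-, \cal{F}_{Nis})$ and the agreement $H^n_{Zar} = H^n_{Nis}$ to the claim that the Cousin complex of $\cal{F}_{Nis}$ on a smooth $k$-variety is a flasque resolution in both topologies. The analysis of the Cousin complex then reduces to a pair of local statements about the cohomological contractions $(\cal{F}_{Nis})_{-n}$ at the Henselization of a smooth variety at a point: an injectivity-on-the-generic-fiber property, and the compatibility of these stalks with their Zariski counterparts.

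The heart of the argument, and where the hypothesis of oriented weak transfers is actually used, will be the standard-triple lemma: given a standard triple $(X, X_\infty, Z)$, produce a correspondence inducing, on the level of $\cal{F}_{Nis}$, the map needed to split off $Z$-supported cohomology. In the classical setting this correspondence is an element of a relative Picard group, not a morphism of schemes. My task will be to construct, within the smooth affine category, an honest finite surjective map (or at worst a span of such maps) between smooth affine varieties whose associated weak transfer realizes the same operation on $\cal{F}$. The characteristic zero hypothesis enters here through Bertini-type genericity and the ability to resolve or smooth the auxiliary varieties produced in the construction.

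The main obstacle will be precisely this construction of smooth affine representatives. Voevodsky's correspondences are produced by divisorial and relative-Picard arguments that do not respect smoothness or affineness a priori, so I expect to spend most of the work arranging the geometry so that after suitable Nisnevich-local shrinking and suitable choices of generic hyperplanes or branched covers, the relevant correspondence is represented by a finite map between smooth affine varieties equipped with the orientation data demanded by the weak transfer structure. Once such representatives are in hand, the remainder of Voevodsky's cohomological machinery can be transported essentially verbatim.
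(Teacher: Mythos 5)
You have correctly identified the paper's guiding idea: replace Voevodsky's cycle-theoretic correspondences by explicit smooth affine representatives to which the oriented weak transfers apply, and pay for smoothness with a Bertini theorem, whence the characteristic-zero hypothesis. That much matches the paper's stated strategy in Subsection~\ref{main idea}.

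However, there are two concrete problems with the logical scaffolding. First, you ``invoke Theorem~\ref{PSOWT} to guarantee that $\cal{F}_{Nis}$ is again a homotopy invariant presheaf with oriented weak transfers.'' But \ref{PSOWT} is the \emph{definition} of the transfer structure, not a theorem, and the claim that $\cal{F}_{Nis}$ inherits both homotopy invariance and the transfer structure is one of the serious theorems that has to be \emph{proved} (Lemma~\ref{TRICAT 3.1.5} supplies the transfers on $\cal{F}_{Nis}$ for any $\cal{F}$, but homotopy invariance of $\cal{F}_{Nis}$ only follows after Proposition~\ref{V 4.25, 4.26} and the identification $\cal{F}_{Zar}=\cal{F}_{Nis}$ of Theorem~\ref{MVW 22.2}, which in turn depend on Nisnevich excision). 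So the proposal is circular at its first step.

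Second, your reduction ``to the claim that the Cousin complex of $\cal{F}_{Nis}$ is a flasque resolution in both topologies'' inverts the logical order of the paper: the Gersten resolution (Theorem~\ref{Gersten}) is derived at the end, \emph{from} homotopy invariance of cohomology, Zariski--Nisnevich comparison, and semilocal injectivity, not the other way around. It is conceivable that a Bloch--Ogus--Gabber style argument could be made to run in the other direction, but then the hard content is precisely to establish Gersten-type exactness, and the proposal gives no route for doing so. What the paper actually supplies, and what your sketch omits, are the intermediate pillars on which the whole thing rests: the geometric presentation lemma controlling singularities at $0$ and $1$ (Lemma~\ref{geom pres}); semilocal injectivity (Corollary~\ref{semilocal inj}); the Mayer--Vietoris sequence for open subschemes of $\bb{A}^1$ via the explicit sections in Propositions~\ref{A1 field injectivity1} and~\ref{Zariski homotopies} (Theorem~\ref{open affine line MV}); the local analogue over a local base (Proposition~\ref{A1 local}); and, above all, the Nisnevich excision theorem (Theorem~\ref{general Nis invariance}) proved via the relative-curve compactification of Lemma~\ref{general Nis geom presentation} and the sections of Proposition~\ref{Nis corr}. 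Without the Nisnevich excision step there is no way to compare the Zariski and Nisnevich topologies in the first place, since the transfer structure alone lives on presheaves; this is the genuinely new difficulty here relative to the approaches of Morel or D\'eglise, where the Nisnevich topology is built in from the start.

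So: right guiding philosophy, right observation about Bertini and characteristic zero, but the decomposition into subproblems is both circular (assuming structure on $\cal{F}_{Nis}$) and missing the concrete intermediate excision and Mayer--Vietoris theorems that carry the actual weight of the proof.
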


Our transfers are very close to the weak transfers considered by Panin-Yagunov \cite{PanYagRigid}, Yagunov \cite{YagRigid}, and Hornbostel-Yagunov \cite{HornYag}.  Motivation for our definition comes from oriented $T$-spectra, but our definition and results are independent of the theory of $T$-spectra.  See Definition \ref{PSOWT} for details.

The reader will see that we rely quite heavily on Voevodsky's architecture.  Our strategy is simply to make explicit the geometric input ($\A$-families of correspondences) of the cohomological theory, and to investigate ``how nice" these correspondences can be made.  In short, we find that all of the necessary constructions can be realized as finite morphisms between smooth affine schemes with trivial sheaf of K\"ahler differentials.  See Subsection \ref{main idea} for a more detailed description of the main idea.  Along the way, we prove results about presheaves with (oriented) weak transfers which may be of independent interest: vanishing on semilocal schemes is detected at the generic point (Corollary \ref{semilocal inj}), the Mayer-Vietoris sequence for open subschemes of the affine line (Theorem \ref{open affine line MV}), Nisnevich excision (Theorem \ref{general Nis invariance}), and the Gersten resolution (Theorem \ref{Gersten}).  At some places we need the field $k$ to be infinite, but the more serious reason for the restriction on the characteristic of $k$ is our use of a Bertini theorem.

The basic example of a presheaf with transfers is a presheaf of cycles \cite{SV}; the basic non-example is algebraic $K$-theory.  (Examples are discussed in further detail in Subsection \ref{examples}.)
Adaptations of Voevodsky's argument to a wider class of theories, especially with the goal of comparing algebraic cycles and $K$-theory, have appeared before.
Mark Walker extended much of Voevodsky's theory to $K_0$-presheaves, a context which includes algebraic $K$-theory, and proved that two definitions of motivic cohomology (one cycle-theoretic, the other $K$-theoretic) are equivalent.
In their construction of the motivic spectral sequence, Friedlander and Suslin used the notion of pseudo-pretheory (a generalization of Voevodsky's notion of pretheory) to obtain fibrations of $K$-theory spectra with supports  over semilocal schemes from such fibrations initially defined over fields \cite{FS}.
Combining Voevodsky's arguments with known results about Witt groups, Panin extended the cohomological results (homotopy invariance and the coincidence of Zariski and Nisnevich cohomology) to the Nisnevich sheafification of the Witt groups 
\cite{PanWNis}.  It is interesting to note that Witt theory is not orientable.
More recently,  Heller, {\O}stv{\ae}r, and Voineagu developed an equivariant version of presheaves with transfers \cite{HOV}.   Other recent generalizations of Voevodsky's construction of motives, namely reciprocity sheaves and presheaves with traces, are discussed briefly in Subsection \ref{context}.

We felt there was some intrinsic interest in isolating and making explicit the geometric aspects of \cite{CTPST}.  Additionally, we imagine that the geometric constructions presented here might prove useful to future adaptations of \cite{CTPST} (and hence \cite{TRICAT}).

\textbf{Acknowledgments.} The results of this paper were obtained in 2010-11 while the author was a wissenschaftlicher Mitarbeiter at the Universit\"at Duisburg-Essen.  The author wishes to thank Marc Levine for the suggestion to investigate whether the geometric constructions of Voevodsky could be adapted to the smooth setting, and for many helpful discussions.  The author also wishes to thank Aravind Asok, Jeremiah Heller, and Anastasia Stavrova for their interest and encouragement.

\section{Presheaves with (oriented weak) transfers}

\subsection{Definition and basic properties}
For a field $k$, we denote by $\textbf{Sm} / k$ the category of smooth separated $k$-schemes of finite type.

\begin{definition} \label{PSOWT} Let $A$ be an additive category, and $\cal{F} :  {(\textbf{Sm} / k)}^{op} \to A$ a functor.  We assume $\cal{F}$ is additive in the sense that $\cal{F}(X_1 \coprod X_2) \cong \cal{F}(X_1) \oplus \cal{F}(X_2)$.  We say the presheaf $\cal{F}$ is \textit{homotopy invariant} if the canonical map ${pr_1}^* : \cal{F}(X) \to \cal{F}(X \times \A)$ is an isomorphism for all $X \in \Sm /k$.  We say $\cal{F}$ has \textit{weak transfers} if for any $X, Y \in \Sm /k$, and any closed embedding (of $Y$-schemes) $X \into Y \times \bb{A}^n$ such that $f :X \to Y$ is finite, flat, and generically \'{e}tale, and so that the normal bundle $N_X (Y \times \mathbb{A}^n)$ is trivial (and trivialized via $\psi$), we are given maps $f_\ast^\psi: \cal{F}(X)  \to \cal{F}(Y)$ satisfying the following properties.

\begin{enumerate}
\item The $f_\ast$'s are compatible with disjoint unions: if $X =X_1 \coprod X_2$ and $f_i : X_i \to Y$ denotes the induced morphism, then the diagram:
$$\xym{ \cal{F}(X) \ar[d]_-\cong \ar[r]^-{f_\ast} & \cal{F}(Y) \\ \cal{F}(X_1) \oplus \cal{F}(X_2) \ar[ru]_{ {f_1}_{\ast} , {f_2}_{\ast} } }$$
commutes.  We have suppressed the $\psi$'s since there is a canonical isomorphism $N_X(Y \times \bb{A}^n) = N_{X_1}(Y \times \bb{A}^n) \oplus N_{X_2}(Y \times \bb{A}^n)$.

\item The $f_\ast$'s are compatible with sections $s : Y \to X$ which are isomorphisms onto connected components of $X$.  In the notation of the previous property, supposing $s$ is an isomorphism onto $X_1$, then we require ${f_1}_\ast = s^\ast$ (for any embedding and trivialization).

\item The $f_\ast^\psi$'s are functorial for embeddings $g: Y' \into Y$  of principal smooth divisors such that $X' := X \times_Y Y' \in \Sm /k$.  That is, given such a $g$, the diagram:
$$\xym{ \cal{F}(X)  \ar[r]^-{{g'}^*} \ar[d]_-{{f}^\psi_\ast} & \cal{F}(X') \ar[d]^-{{f'}^{\psi'}_{\ast}} \\ \cal{F}(Y) \ar[r]^-{g^*} & \cal{F}(Y') }$$
commutes.  Corollary \ref{prin divisor} implies this diagram makes sense and defines $\psi'$.

\item The $f_\ast^\psi$'s are functorial for smooth morphisms $g : Y' \to Y$.

\item The $f_*^\psi$'s are compatible with the addition of irrelevant summands: suppose $i =(f,\beta) : X \into Y \times \bb{A}^n$ is an $N$-trivial embedding, with the normal bundle trivialized via $\psi$.  Let $(i, 0) =(f,0,\beta) : X \into Y \times \A \times \bb{A}^n$.  Then, using the notation of Lemma \ref{N trivial A1 stable}, there is a canonical identification $N^\vee_i \oplus \cal{O} = N^{\vee}_{(i,0)}$.  Hence $\psi$ induces a trivialization $(\psi,0)$ of $N_{(i,0)}$.  Then the requirement is that $f_*^\psi = f_*^{(\psi,0)} : \cal{F}(X) \to \cal{F}(Y)$.

\end{enumerate}

If $A$ is the category $\bf{Ab}$ of abelian groups, then we require all of the weak transfers $f_*$ to be homomorphisms of abelian groups.  We say $\cal{F}$ has \textit{oriented} weak transfers if every map $f_*^\psi$ is independent of the trivialization $\psi$.  We say $\cal{F}$ has \textit{weak transfers for affine varieties} if we are given weak transfers as described above whenever $X$ and $Y$ are affine. 
A \textit{morphism} of presheaves with some kind of transfer structure is a morphism of presheaves compatible with all transfer maps.
\end{definition}

\begin{remark} Our notion of weak transfers is very close to the notion of weak transfers studied by Panin-Yagunov \cite{PanYagRigid} and Yagunov \cite{YagRigid}.  Our condition (1) is exactly the additivity condition, and our conditions (3) and (4) identify the particular types of transversal base change needed in the proof.  Our condition (2) is a stronger form of the normalization condition of \cite[Property 1.7]{PanYagRigid}, \cite[Proposition 3.3]{YagRigid}.  Actually, we need condition (2) only for certain $X \in \Sm / k$ (e.g., open subschemes of the affine line over a local scheme) which appear in various constructions.  
Our condition (5) is not used by these authors; it is motivated by the weak transfers in $T$-spectra, and it is used in the proof of ``independence of embedding" below.
Rigidity theorems for presheaves with weak transfers are established in \cite{PanYagRigid}, \cite{YagRigid}, and \cite{HornYag}. \end{remark}

\textbf{Basic properties.} Suppose $A$ is an abelian category.  Then the category of homotopy invariant presheaves with ((oriented) weak) transfers (for affine varieties) with values in $A$ is abelian; and the inclusion of this category into the category of $A$-valued presheaves with ((oriented) weak)  transfers (for affine varieties) is exact.  Any subpresheaf with ((oriented) weak)  transfers (for affine varieties) of a homotopy invariant presheaf with the same transfers is homotopy invariant.  Any presheaf with transfer extension of homotopy invariant presheaves with the same transfer structure is homotopy invariant.

\subsection{Consequences of orientation} Under the hypothesis of homotopy invariance, if the weak transfers are independent of the normal bundle trivialization, then they are independent of the $N$-trivial embedding.  This justifies the omission of the map $X \to \bb{A}^n$ in the notation for the weak transfer.  For clarity in the proof, we will decorate the weak transfer with further notation for the map $X \to \bb{A}^n$.

\begin{lemma}[independence of embedding] \label{ind of embedding} Suppose $\cal{F}$ is a homotopy invariant presheaf on $\Sm / k$ with oriented weak transfers, and let $f : X \to Y$ be a finite, generically \'etale morphism in $\Sm/k$.  Suppose $\Omega^1_{X/k}$ and $\Omega^1_{Y/k}$ are trivial.  Then the map $f_* : \cal{F}(X) \to \cal{F}(Y)$ is independent of the choice of $N$-trivial embedding. \end{lemma}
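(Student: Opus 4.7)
The plan is to connect the two given embeddings by an $\A$-family of $N$-trivial embeddings, apply property (3) at the fibers over $t=0$ and $t=1$, and use homotopy invariance to equate the resulting transfers; orientation enters through property (5), which we invoke to absorb ancillary zero coordinates.

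Given $N$-trivial embeddings $i_\epsilon = (f,\beta_\epsilon): X \into Y \times \AAA^{n_\epsilon}$ for $\epsilon \in \{0,1\}$, I would consider the morphism $F := f \times \id : X \times \A \to Y \times \A$ together with the map
\[
  J : X \times \A \to Y \times \A \times \AAA^{n_0} \times \AAA^{n_1}, \qquad (x,t) \mapsto \bigl(f(x),\, t,\, (1-t)\beta_0(x),\, t\beta_1(x)\bigr).
\]
Three points need verification: $J$ is a closed embedding (for any specialization of $t$, at least one of $1-t$ and $t$ is a unit, so either the pair $(f,\beta_0)$ or the pair $(f,\beta_1)$ separates any two points of $X$ lying over the same point of $Y$); $F$ inherits finiteness, flatness, and generic \'etaleness from $f$; and the normal bundle of $J$ is free on the last $n_0+n_1$ coordinate functions. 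These together make $J$ an $N$-trivial embedding of $F$.

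Applying property (3) to the principal smooth divisors $Y \times \{\epsilon\} \into Y \times \A$ (cut out by $t$ and $1-t$ respectively), whose base changes $X \times \A \times_{Y \times \A} (Y \times \{\epsilon\}) \cong X$ are smooth, yields commutative squares
\[
  g_\epsilon^* \circ F_* = (j_\epsilon)_* \circ (g_\epsilon')^*,
\]
where $j_0 = (f,\beta_0,0)$ and $j_1 = (f,0,\beta_1)$ are the induced embeddings $X \into Y \times \AAA^{n_0+n_1}$. Since $\cal{F}$ is homotopy invariant, $g_\epsilon^*$ and $(g_\epsilon')^*$ are each inverse to ${pr_1}^*$ and so independent of $\epsilon$; hence $(j_0)_* = (j_1)_*$. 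Iterated application of property (5) gives $(j_1)_* = (i_1)_*$ directly, and $(j_0)_* = (i_0)_*$ follows by one further interpolation of the same form (now between $(f,\beta_0,0)$ and $(f,0,\beta_0)$ inside $Y \times \AAA^{n_0+n_1}$) combined with property (5). Chaining the identifications produces $(i_0)_* = (i_1)_*$.

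The step I expect to be the main obstacle is checking that $J$ is genuinely a closed embedding throughout the family. The simpler-looking interpolation $(1-t)\beta_0 + t\beta_1$ landing in a single copy of $\AAA^n$ can fail to separate points of $X$ at intermediate values of $t$ (for instance when a non-trivial fiber action of $f$ is exchanged by $\beta_0$ and $\beta_1$), which is exactly why the passage to the doubled ambient space $\AAA^{n_0} \times \AAA^{n_1}$ is needed, and then why the zero-padded embeddings $j_0,j_1$ must be reconciled with $i_0,i_1$ afterwards via property (5) and orientation.
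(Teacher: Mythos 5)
Your overall strategy matches the paper's: interpolate between the two embeddings over $\A$, restrict to the fibers at $0$ and $1$ via property (3), use homotopy invariance to equate the two restriction maps $\cal{F}(X\times\A)\to\cal{F}(X)$, and then strip off the irrelevant zero summands via property (5). The closed-embedding check you highlight as the ``main obstacle'' is fine and is the same observation the paper makes implicitly.

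The actual gap is elsewhere: you assert that ``the normal bundle of $J$ is free on the last $n_0+n_1$ coordinate functions,'' and hence that $J$ is itself an $N$-trivial embedding. That claim is not justified and is not true in general. From the conormal sequence
\[
0 \to N^\vee_J \to J^*\Omega^1_{Y\times\A\times\AAA^{n_0}\times\AAA^{n_1}/k} \to \Omega^1_{X\times\A/k} \to 0
\]
one only gets that $N^\vee_J$ is \emph{stably} free (the middle and right terms are trivial because $\Omega^1_{X/k}$ and $\Omega^1_{Y/k}$ are, so the sequence splits). Stably free does not imply free, and $N$-triviality of $(f,\beta_0)$ and of $(f,\beta_1)$ at the endpoints of the family says nothing about a global trivialization over $\A\times X$. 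The differentials of the last $n_0+n_1$ coordinates do not even lie in the conormal bundle (they do not vanish on $J(X\times\A)$), so they cannot form a frame for it. The paper avoids this by taking the product of $\iota$ with a \emph{constant} map $\A\times X\to\AAA^{\dim Y+1}$, which by the proof of Lemma~\ref{N trivial no kahler} turns the merely stably-trivial normal bundle into a genuinely trivial one; the extra constant coordinates are then removed at $t=0$ and $t=1$ by the same iterated use of property (5) you already invoke. You should insert that padding step, or give an actual argument that $N_J$ is trivial (which would require something beyond the hypotheses). The remaining steps of your argument then go through as in the paper.
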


\begin{proof} Let $(f, \alpha) : X \into Y \times \bb{A}^n$ and $(f, \beta) : X \into Y \times \bb{A}^m$ be $N$-trivial embeddings.  Consider the closed immersion $\iota: \A \times X \to \A \times Y \times \bb{A}^n \times \bb{A}^m$ defined by $\iota (t,x) = (t,f(x),t\alpha, (1-t)\beta)$.  Since both $\A \times X$ and $\A \times Y$ have trivial sheaf of K\"ahler differentials, Lemma \ref{N trivial no kahler} implies that the product of $\iota$ with a constant morphism $\A \times X \to \bb{A}^{\dim Y + 1}$ is an $N$-trivial embedding.  We denote the product morphism by $(\iota, c)$.

Homotopy invariance implies the maps $i_0^*, i_1^* : \cal{F}(\A \times X) \to \cal{F}(X)$ are both inverse to the isomorphism induced by the projection, hence are equal.  Now the compatibility of the weak transfers with the inclusions $0 \times Y, 1 \times Y \into \A \times Y$ implies $(1 \times f, \iota)_* = (f, \iota |_{0 \times X})_* = (f, \iota |_{1 \times X})_*$.

We have $\iota(0,x) = (0, f(x), 0, \beta, c)$.  Since the weak transfers are compatible with the addition of irrelevant summands, we have $(f, \iota |_{0 \times X})_* = (f, \beta)_*$.  Similarly, since $\iota(1,x) = (1, f(x), \alpha, 0, c)$ we have $(f, \iota |_{1 \times X})_* = (f, \alpha)_*$.
\end{proof}

Next we have the analogue of \cite[Prop.~3.11]{CTPST}.

\begin{lemma}[factorization through rational equivalence] Let $X,S$  be smooth $k$-schemes, and suppose $f :X \to \A \times S$ is a finite, generically \'etale morphism which admits an $N$-trivial embedding.  Furthermore suppose the base change $X_i := X \times_{\A \times S} i \times S$ is smooth for $i=0,1 \in \A (k)$.  Let $g_0 : X_0 \into X$ and $f_0 : X_0 \to S$ denote the induced morphisms.

\noi Let $\cal{F}$ be a homotopy invariant presheaf on $\Sm / k$ with oriented weak transfers.

\noi Then we have ${f_0}_{\ast}  \circ  {g_0}^{\ast} = {f_1}_{\ast}  \circ  {g_1}^{\ast} : \cal{F}(X) \to \cal{F}(S)$. \end{lemma}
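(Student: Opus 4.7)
The plan is to reduce the statement to the functoriality of the weak transfers along principal smooth divisors (property (3) of Definition \ref{PSOWT}) combined with homotopy invariance applied to $\cal{F}(\A \times S)$.

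First I would set up pushforward of $f$ itself. Since $f$ is finite between smooth varieties it is automatically flat (by miracle flatness), it is generically \'etale by hypothesis, and by assumption it admits an $N$-trivial embedding; so one has a weak transfer $f_\ast : \cal{F}(X) \to \cal{F}(\A \times S)$, which by orientation is independent of the embedding (Lemma \ref{ind of embedding}). Next, consider the two closed embeddings $i_0, i_1 : S \into \A \times S$. Each is a principal smooth divisor, cut out by $t$ respectively $t-1$ on $\A$, and by hypothesis the fiber products $X_i = X \times_{\A \times S} i_i(S)$ are smooth. Thus property (3) applies and yields, for each $i \in \{0,1\}$, a commutative square
$$\xym{ \cal{F}(X) \ar[r]^-{g_i^*} \ar[d]_-{f_\ast} & \cal{F}(X_i) \ar[d]^-{{f_i}_\ast} \\ \cal{F}(\A \times S) \ar[r]^-{i_i^*} & \cal{F}(S). }$$
(Corollary \ref{prin divisor} guarantees the induced normal bundle trivializations on the pullback embeddings make sense, so the relevant transfers $(f_i)_\ast$ are defined.)

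Second, I would invoke homotopy invariance. The composition $\pr_2^* : \cal{F}(S) \to \cal{F}(\A \times S)$ is an isomorphism, and $i_0^* \circ \pr_2^* = \id = i_1^* \circ \pr_2^*$, so $i_0^* = i_1^*$ on $\cal{F}(\A \times S)$. Combining this equality with the two commutative squares above gives
$${f_0}_\ast \circ g_0^* \;=\; i_0^* \circ f_\ast \;=\; i_1^* \circ f_\ast \;=\; {f_1}_\ast \circ g_1^*,$$
which is the desired identity.

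There is really no main obstacle: the argument is just the standard "specialize at $0$ vs at $1$" trick, packaged so that property (3) of oriented weak transfers plays the role of proper base change. The only point that needs a brief verification is that the hypotheses of property (3) are in force for the principal divisors $\{t=0\}$ and $\{t=1\}$ in $\A \times S$; this is immediate from the smoothness of $X_0, X_1$ that is built into the statement.
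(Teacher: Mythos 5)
Your proof is correct and takes essentially the same approach as the paper: apply property (3) of Definition \ref{PSOWT} to the transverse squares at $0 \times S$ and $1 \times S$, then use homotopy invariance to identify $i_0^\ast = i_1^\ast$. The paper's version is more terse, but the content is identical.
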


\begin{proof} The compatibility with the transverse squares determined by the embeddings $0 \times S, 1 \times S \into \A \times S$ implies that $i_0^{\ast}  \circ  f_{\ast} =   {f_0}_{\ast}  \circ  {g_0}^{\ast} :  \cal{F}(X) \to \cal{F}(S)$ (and similarly for the fiber at $1 \times S$).  Since $i_0^* = i_1^*$, the result follows. \end{proof}

The analogue of \cite[Prop.~3.12]{CTPST} is immediate because our transfers are defined on the groups $\cal{F}(X)$ rather than as homomorphisms from a group of cycles.  Thus the requirement that $\cal{F}$ be a presheaf gives the following result.

\begin{lemma} Let $f : X \to Y$ be a finite, generically \'etale morphism in $\Sm / k$ which admits an $N$-trivial embedding.  Suppose $i: X \to C$ is a closed immersion into a smooth $Y$-curve $C$, and suppose $j : C \subset C'$ is open immersion of smooth $Y$-curves.

\noi Let $\cal{F}$ be a homotopy invariant presheaf on $\Sm / k$ with oriented weak transfers.

\noi Then we have $f_* \circ {(j \circ i)}^* = f_* \circ i^* \circ j^* : \cal{F}(C') \to \cal{F}(Y)$. \end{lemma}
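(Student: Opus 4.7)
The plan is very short: the identity in the statement is essentially a tautology given that $\cal{F}$ is a contravariant functor on $\Sm/k$. Since $j : C \subset C'$ and $i: X \into C$ are both morphisms in $\Sm/k$, the functoriality of $\cal{F}$ gives $(j \circ i)^* = i^* \circ j^* : \cal{F}(C') \to \cal{F}(X)$. I would then compose on the left with the weak transfer $f_* : \cal{F}(X) \to \cal{F}(Y)$ (which is well-defined by hypothesis, since $f$ is finite, generically \'etale, and admits an $N$-trivial embedding, and is independent of the embedding by Lemma \ref{ind of embedding}) to obtain the desired equality $f_* \circ (j\circ i)^* = f_* \circ i^* \circ j^*$.

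The only thing to check is that every arrow appearing makes sense. The curves $C$ and $C'$ are smooth over $Y$ and hence smooth over $k$, so $\cal{F}(C)$ and $\cal{F}(C')$ are defined and $j^*$ is defined. The closed immersion $i$ is a morphism in $\Sm/k$, so $i^*$ and $(j\circ i)^*$ are defined. The transfer $f_*$ depends only on $f$ as a morphism in $\Sm/k$ (not on $C$, $C'$, or the auxiliary embeddings), so it appears identically on both sides.

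There is essentially no obstacle: as the author remarks, the reason the analogue of \cite[Prop.~3.12]{CTPST} is immediate in this framework is that our transfers act directly on presheaf values $\cal{F}(X)$ rather than on groups of cycles which must be separately shown to be compatible with pullback along open immersions. In Voevodsky's cycle-theoretic setup, an intermediate verification is needed because cycle pullback and transfer are a priori two different geometric operations; here, by contrast, $i^*$ and $j^*$ are just the presheaf structure maps, and their composability is built into the definition of a presheaf.
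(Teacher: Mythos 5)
Your proof is correct and is exactly the paper's own argument: the paper gives the lemma no separate proof, noting in the preceding sentence that it is immediate from functoriality of the presheaf $\cal{F}$ (i.e., $(j\circ i)^* = i^*\circ j^*$) since the transfers act directly on the groups $\cal{F}(X)$. Nothing to add.
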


\subsection{Examples} \label{examples}
Presheaves with transfers in the sense of Voevodsky \cite{CTPST} are presheaves with oriented weak transfers.  More generally, the $K_0$-presheaves considered by Mark Walker \cite{WalkerThesis} have oriented weak transfers.  (One can ignore the embeddings and normal bundles.  Given a finite flat morphism $f : Y \to X$, the transpose of the graph of $f$ determines a class in $K_0(X, Y)$, hence a morphism $f_* :\cal{F}(Y) \to \cal{F}(X)$.   See \cite[Lemma 5.6]{WalkerThesis}.)
To get a feeling for the differences among these, suppose
$\cal{Z} = \cal{Z}_1 \cup \cal{Z}_2$ is a $B$-relative zero-cycle in a smooth curve $C \to B$ with $B \in \Sm / k$.
If $\cal{F}$ is a presheaf with transfers (or a pretheory), then
\begin{equation} \label{transfer eqn} \phi_{\cal{Z}} = \phi_{\cal{Z}_1} + \phi_{\cal{Z}_2} : \cal{F}(C) \to \cal{F}(B). \end{equation}
If $\cal{F}$ is a $K_0$-presheaf (or a pseudo-pretheory), then (\ref{transfer eqn}) holds if the ideal sheaf of $\cal{Z}_1$ is trivial upon restriction to $\cal{Z}_2$ (or vice versa), but in general (\ref{transfer eqn}) may fail to hold.  If $\cal{F}$ is a presheaf with oriented weak transfers, then (\ref{transfer eqn}) holds provided $\cal{Z}_1$ and $\cal{Z}_2$ are themselves smooth (in particular, multiplicity-free), the morphisms $\cal{Z}_i \to B$ admit $N$-trivial embeddings, and $\cal{Z}_1 \cap \cal{Z}_2 = \emptyset$.  Without these conditions, one or both sides of (\ref{transfer eqn}) may not be defined.  In short, in our setting there are fewer transfer morphisms, and it is more difficult to verify relations among them.

Further examples come from $T$-spectra: the $0$-space of a $T$-spectrum $E$ (or its homotopy (pre)sheaves) has weak transfers. 
See \cite{PanYagRigid} or \cite[\S 9]{MLChow} for details on the construction.
The condition (5) in Definition \ref{PSOWT} roughly corresponds to passing to the $0$-space.
That the weak transfers in $T$-spectra satisfy condition (5) boils down to a lemma of Spitzweck \cite[Lemma 3.5]{MSrelns} which asserts there is a canonical isomorphism $T \wedge X / T \wedge (X - Z) = Th (\cal{O}_X) / Th(\cal{O}_{X - Z}) \cong Th (N_Z X \oplus \cal{O}_Z)$ in $\cal{H}_\bullet(k)$.  Here  $\cal{H}_\bullet(k)$ is the homotopy category of the Morel-Voevodsky category of pointed simplicial presheaves on $\Sm / k$ with the $\A$-Nisnevich model structure \cite[Thm.~2.3.2, Defn.~3.2.1]{MV}.

If the $T$-spectrum $E$ is oriented, then a vector bundle automorphism of a vector bundle $V$ over $X \in \Sm / k$ induces the identity map on $E (Th(V))$ \cite[Defn.~3.1.1]{Panin}.  In particular the choice of trivialization of the normal bundle does not influence the weak transfer map, so such a spectrum has oriented weak transfers.  Following Yagunov's observation \cite[p.~30]{YagRigid}, we point out that we only use independence of trivialization for normal bundles arising in our constructions, which is a bit different from the full strength of the orientation.

Additionally, the weak transfers are inherited by various ``support" constructions on $T$-spectra $E$ (possibly after passing to homotopy presheaves), in particular the spectra $E^Q$ and $E^{(q)}$ considered in \cite{CTS}, which are geometric models for the slice tower of $E$.  Note that, in characteristic zero, Levine has shown the higher slices of a (not necessarily orientable) $S^1$-spectrum $E$ have filtrations whose subquotients are complexes of homotopy invariant presheaves with transfers.  For the zero slice, one has a similar result after taking homotopy sheaves of the loop space; see \cite{ST}.
In fact our original motivation for proving homotopy invariance of cohomology was to develop localization machinery applicable to the presheaves of spectra $E^Q, E^{(q)}$ (maybe assuming $E$ oriented) and thereby extend to all quasi-projective varieties the main result of \cite{CTS}.  We encountered difficulties in applying the Friedlander-Lawson moving lemma (as in \cite{BCC}) to a general cohomology theory $E$.

\subsection{Context} \label{context} Nisnevich sheaves of abelian groups with homotopy invariant cohomology are called \textit{strictly $\A$-invariant} by Morel \cite[Defn.~7]{MorelBook}.
A large supply of homotopy invariant cohomology is provided by the following result of Morel:
the homotopy sheaves (in degree $\geq 2$) of a pointed motivic space (i.e., object of $\cal{H}_\bullet(k)$) are strictly $\bb{A}^1$-invariant, and in degree $1$ are strongly $\bb{A}^1$-invariant \cite[Cor.~5.2]{MorelBook}.
Homotopy modules are the motivic analogues of stable homotopy groups \cite[p.~520]{Deglise}.
A theorem of D{\'e}glise identifies the homotopy modules with transfers as those, among homotopy modules, which are orientable \cite{Deglise}.  Roughly speaking, this says homotopy invariance of cohomology and orientability together imply the existence of Voevodsky transfers.  Roughly speaking, our result says (assuming homotopy invariance for the presheaf itself) orientability and weak transfers together imply the homotopy invariance of cohomology.  There is an important conceptual difference between these results and ours: in the more structural approaches of Morel and D{\'e}glise, the Nisnevich topology is built in from the beginning, whereas we work with presheaves, so that proving Nisnevich excision is one of the main challenges.

Incidentally, the result of D{\'e}glise also suggests that the Nisnevich sheafification of a homotopy invariant presheaf with oriented weak transfers has Voevodsky transfers.  Additional evidence is the following pair of results on $K_0$-presheaves: the Zariski separation of a homotopy invariant $K_0$-presheaf is again a homotopy invariant $K_0$-presheaf \cite[Prop.~5.20]{WalkerThesis}, and a Zariski separated $K_0$-presheaf is a pretheory \cite[Thm.~6.23]{WalkerThesis}.

Finally, we also wish to mention two recent expansions of Voevodsky's theory in other directions.
The theory of reciprocity sheaves aims to extend constructions in the style of Voevodsky to capture non-homotopy invariant phenomenona \cite{KRec}, \cite{IR}, \cite{KSY}.  
Kelly has defined presheaves with traces and the $\ell dh$ topology as a framework for motives in positive characteristic, using alterations of singular varieties as a replacement for resolution of singularities \cite{Kelly}.
By contrast, our methods rely heavily on the hypothesis of homotopy invariance, and our goal is to work entirely with smooth correspondences.

\section{Presheaves with weak transfers on semilocal schemes}

\subsection{Geometric Presentation Lemma}

The following lemma enhances the presentation lemma of Panin-Zainoulline \cite[3.5]{PanZBO} (which itself enhances the presentation lemma of Panin-Ojanguren \cite[Sect.~10]{PO}) by controlling the singularities of the presentation at $0$ and $1$.  
 
\begin{lemma}  \label{geom pres} Let $k$ be a perfect infinite field.  Let $R$ be a semilocal essentially smooth $k$-algebra, and $A$ an essentially smooth $k$-algebra which is finite over $R[t]$.  Suppose given an $R$-augmentation $\epsilon : A \to R$ and that $A$ is $R$-smooth at every prime containing $I:=\ker \epsilon$.  Finally suppose given $f \in A$ such that $A / f A$ is $R$-finite.  Then there exists $s \in A$ such that:
\begin{enumerate}
\item $A$ is $R[s]$-finite;
\item $A / sA = A / I \times A / J$ for some ideal $J \subset A$, with $A/J$ essentially smooth and $R \to A/J$ generically \'{e}tale;
\item $J + fA = A$; and
\item $(s-1)A + fA = A$, with $A / (s-1)A$ essentially smooth and $R \cong R[s] / (s-1) \to A / (s-1)A$ generically \'{e}tale.
\end{enumerate}
\end{lemma}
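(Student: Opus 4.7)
My plan is to deduce the lemma from the Panin--Zainoulline presentation lemma \cite[3.5]{PanZBO} by a rescaling argument. First apply Panin--Zainoulline to obtain a preliminary element $s_0 \in A$ satisfying (1), (2), (3): that is, $A$ is $R[s_0]$-finite, $A/s_0 A = A/I \times A/J$ with $A/J$ essentially smooth and generically \'etale over $R$, and $J + fA = A$. The enhancement sought here is condition (4), which controls the fiber over $s = 1$. My proposal is to set $s := c^{-1} s_0$ for a scalar $c \in k^\times$ chosen generically.

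Since $c^{-1}$ is a unit of $R$, and hence of $A$, the ideal $sA$ coincides with $s_0 A$ and $R[s] = R[s_0]$, so conditions (1), (2), (3) for $s$ follow immediately from their counterparts for $s_0$. What remains is (4): since $(s-1)A = (s_0 - c)A$, this translates to requiring that $V(s_0 - c) \subset \spec A$ be essentially smooth over $k$, generically \'etale over $R$, and disjoint from $V(f)$. Each of these holds for generic $c$. Disjointness holds because the image of $V(f) \to \A_R$ via $s_0$ is a closed subscheme of $\A_R$ finite over the semilocal $R$ (as $A/fA$ is $R$-finite), and hence meets the section $s_0 = c$ for only finitely many values of $c \pmod{\mathfrak{m}}$ at each closed point $\mathfrak{m}$ of $R$. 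Smoothness follows by a Bertini-type argument applied to the pencil $\{V(s_0 - c)\}_{c \in k}$ on the smooth $k$-scheme $\spec A$, whose base locus is empty since distinct values of $c$ give disjoint vanishing loci; perfectness and infiniteness of $k$ then give a smooth generic member. Generic \'etaleness reduces to the analogous property of the finite map $R[s_0] \to A$, which holds because both are essentially smooth $k$-algebras of the same Krull dimension. The infiniteness of $k$ allows a common generic $c$ to be chosen.

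The main obstacle I foresee is ensuring that the Panin--Zainoulline output $s_0$ already realizes (1)--(3) in the exact form stated above (particularly the generic \'etaleness of $R \to A/J$); if their statement falls short of this, a small preliminary refinement of $s_0$ using a similar generic scaling or perturbation should close the gap. A secondary technical point is that $R$ is only semilocal, so the ``bad loci'' for $c$ must be controlled simultaneously over all closed points of $R$ and their residue fields; but since each such locus is finite at each closed point and $k$ is infinite, the required $c \in k^\times$ exists in the complement.
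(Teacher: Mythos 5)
Your proposal has a structural flaw that the rescaling device cannot overcome. The paper states explicitly that this lemma \emph{enhances} Panin--Zainoulline \cite[3.5]{PanZBO} ``by controlling the singularities of the presentation at $0$ and $1$''; that is, the essential smoothness of $A/J$ in item (2) and the whole of item (4) are precisely what is \emph{not} provided by the cited lemma. But your map $s := c^{-1}s_0$ leaves the ideal $s_0 A = sA$ (and hence $J$) completely unchanged, so if the Panin--Zainoulline output $s_0$ has $A/J$ singular, no choice of $c$ repairs it. You flag this as a ``main obstacle,'' but the suggested repair --- ``a small preliminary refinement of $s_0$ using a similar generic scaling or perturbation'' --- runs into the constraint that any replacement of $s_0$ must still lie in $I = \ker\epsilon$ (else the decomposition $A/sA = A/I \times A/J$ is lost), so adding a generic constant or rescaling are both off the table for fixing the fiber at $0$. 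The paper instead re-enters the Panin--Ojanguren construction, writing $s = \alpha - rt^N$ with $\alpha \in I$, and varies the \emph{coefficient} $r \in k^\times$ --- a pencil of functions, not a pencil of level sets --- together with a local factorization of $I$ to make a Bertini argument available at both $s=0$ and $s=1$.

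There are also two technical missteps even in the part of your argument addressing (4). First, your Bertini claim for the pencil of level sets $\{V(s_0 - c)\}_{c}$ is not valid over a general perfect infinite field: in characteristic $p>0$, the generic fiber of a regular function on a smooth variety can be everywhere singular (e.g.\ $s_0 = x^p$ on $\mathbb{A}^2$), and perfectness of $k$ does not rescue generic smoothness of fibers. The paper's pencil $\{\alpha - rt^N\}_r$ is a genuine linear system whose members are crafted (following \cite{PO}) to avoid this phenomenon; it is not the family of level sets of a single function. Second, your assertion that generic \'etaleness of $R \to A/(s_0 - c)A$ ``holds because both are essentially smooth $k$-algebras of the same Krull dimension'' is false in positive characteristic: the Frobenius $k[y] \to k[x]$, $y \mapsto x^p$, is a finite map between smooth $k$-algebras of the same dimension which is nowhere \'etale. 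Generic \'etaleness requires a separability argument; the paper gets it from purity of the branch locus together with the assumed $R$-smoothness along $V(I)$, neither of which appears in your sketch.

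In short: the approach is not a shortcut to the paper's proof but an attempt to bypass the genuinely new content, and the Bertini and \'etaleness steps for (4) need replacement by arguments that survive positive characteristic.
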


\begin{proof} One finds an element $\alpha \in I \subset A$ with suitable vanishing properties.  The main point is that for almost all $r \in k^\times$, the element $s := \alpha - rt^N$ (here $N$ is large compared to the degrees of the coefficients $p_i(t)$ of an equation expressing the integral dependence of $\alpha$ on $R[t]$; see the proof in \cite{PO} for further explanation of the notation) satisfies the conclusions.  Then for almost all $r \in k^\times$, the element $s$ will have the required smoothness and \'etaleness properties.

The ideal $I$ is locally principal since it is a section to smooth morphism of relative dimension 1\comment{CHECK QUILLEN reference SGA/EGA}.  Locally on $\spec A$, we can write $\alpha = a \alpha^\prime, t = a^l t^\prime$, with $(a) = I$ and $\alpha^\prime, t^\prime \notin I$.  By factoring out a suitable power of $a$ we obtain a local defining equation for the residual part $\spec (A/ J)$ of $V(s)$.  By Bertini's theorem, $V(s)$ is regular for most choices of $r \in k^\times$.  Since there is a finite cover of $\spec A$ on which $I$ is principal, we can find an $r$ which works uniformly.  Similarly, most choices of $r$ will produce a regular fiber at $s=1$.  The purity of the branch locus, together with the assumed $R$-smoothness along $A/I$, implies the resulting finite extensions are generically \'etale.
\end{proof}

\subsection{Semilocal vanishing is detected generically}

For a scheme $X$ and a finite set $S \subset X$, let ${\cal{O}}_{X,S}$ denote the semilocalization of $X$ at $S$.  For an irreducible variety $X$, let $\eta_X \in X$ denote its generic point.  
Since $\eta_X$, for example, is not generally of finite type over $k$, we define $\cal{F}(\eta_X)$ as the limit of the values of $\cal{F}$ on all (finite type) open subschemes of $X$.  For any open $V \subset X$, let $j_V :V \to X$ denote the open immersion.  In this section we generalize results from \cite[Sect.~4]{CTPST}.

\begin{theorem} \label{semilocal efface} Let $k$ be a perfect infinite field.  Suppose $\cal{F}$ is a homotopy invariant presheaf of abelian groups on ${\bf{Sm}} /k$ with weak transfers for affine varieties.  Let $X$ be an irreducible smooth affine $k$-scheme, $\dim X = d$, such that $\Omega^1_{X/k} \cong \cal{O}^d_X$.  Let $S \subset X$ be a finite set, and $Z \hookrightarrow X$ a closed subset.  Then there exists a neighborhood $U$ of $S$ and a map $a: \cal{F}(X \setminus Z) \to \cal{F}(U)$ such that the following diagram commutes.
$$\xym{ \cal{F}(X) \ar[r]^-{j_{X \setminus Z}^*} \ar[d]_-{j_U^*} & \cal{F}( X \setminus Z) \ar[ld]^-a \\ \cal{F}(U) }$$
 \end{theorem}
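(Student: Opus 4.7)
The plan mirrors Voevodsky's argument in \cite[\S 4]{CTPST}: use Lemma \ref{geom pres} to produce a ``standard triple'' for $(X, S, Z)$, and then assemble $a$ from the weak transfer structure via the factorization through rational equivalence proven above. I would first set up the geometry. After shrinking $X$ to a smaller affine neighborhood of $S$, a Noether-normalization argument, available because $X$ is smooth affine with $\Omega^1_{X/k}$ trivial and $k$ is infinite, produces a smooth projection $\pi : X \to V$ of relative dimension one near $S$, a finite morphism $p : X \to \bb{A}^1 \times V$ lifting $\pi$, and a function $f$ on $X$ whose vanishing locus contains $Z$ and such that $\cal{O}(X)/f\cal{O}(X)$ is finite over $\cal{O}(V)$. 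Applying Lemma \ref{geom pres} to $R = \cal{O}_{V, \pi(S)}$, $A = R \otimes_{\cal{O}(V)} \cal{O}(X)$ with the augmentation determined by $S$, and spreading the resulting element $s$ out to a Zariski-open $V' \subset V$ containing $\pi(S)$, I obtain a finite morphism $p_s : U \to \bb{A}^1 \times V'$ with $U := \pi^{-1}(V')$ a Zariski neighborhood of $S$ in $X$, whose fiber over $\{0\} \times V'$ decomposes as $S_0 \sqcup Z_0$, where $\tilde\sigma : V' \xrightarrow{\sim} S_0$ is a section through $S$ and $Z_0$ is disjoint from $Z$, and whose fiber $U_1$ over $\{1\} \times V'$ is smooth and disjoint from $Z$; the morphisms $Z_0 \to V'$ and $U_1 \to V'$ are finite and generically \'etale.

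I would then assemble $a$. The hypothesis $\Omega^1_{X/k} \cong \cal{O}_X^d$, combined with Lemma \ref{ind of embedding}, provides the $N$-trivial embeddings needed to make the transfers $(q_1)_\ast : \cal{F}(U_1) \to \cal{F}(V')$ and $(q_{0,Z_0})_\ast : \cal{F}(Z_0) \to \cal{F}(V')$ unambiguous. Applying the factorization through rational equivalence to $p_s$, splitting the fiber over $0$ via the additivity axiom (1), and identifying the $S_0$-contribution with $\tilde\sigma^\ast$ via the section axiom (2), yields the identity
\[ \tilde\sigma^\ast = (q_1)_\ast \circ g_1^\ast - (q_{0,Z_0})_\ast \circ g_{0,Z_0}^\ast \quad \text{on} \quad \cal{F}(U), \]
where $g_1 : U_1 \hookrightarrow U$ and $g_{0,Z_0} : Z_0 \hookrightarrow U$ are the inclusions. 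Since $U_1$ and $Z_0$ are disjoint from $Z$, the right-hand side factors through $\cal{F}(U \setminus Z)$, producing an effacing morphism $\tilde a : \cal{F}(U \setminus Z) \to \cal{F}(V')$. The candidate $a : \cal{F}(X \setminus Z) \to \cal{F}(U)$ is the composition of the restriction $\cal{F}(X \setminus Z) \to \cal{F}(U \setminus Z)$, the effacement $\tilde a$, and the pullback $\pi^\ast : \cal{F}(V') \to \cal{F}(U)$.

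The main obstacle will be verifying the commutativity of the diagram. The construction above naturally computes $a \circ j_{X \setminus Z}^\ast = \pi^\ast \circ \tilde\sigma^\ast \circ j_U^\ast = (j_U \circ \tilde\sigma \circ \pi)^\ast$ on $\cal{F}(X)$, whereas we want $j_U^\ast$. The composition $\tilde\sigma \circ \pi : U \to U$ is the retraction of $U$ onto $\tilde\sigma(V') \subset U$ along $\pi$ and does not literally equal $\id_U$, so these two maps $\cal{F}(X) \to \cal{F}(U)$ differ a priori. Bridging this gap is the heart of the argument: the resolution should involve either further shrinking $U$ so that $\pi : U \to V'$ becomes an $\bb{A}^1$-bundle over a smaller base, in which case homotopy invariance makes $\pi^\ast$ an isomorphism and thereby forces $\pi^\ast \circ \tilde\sigma^\ast = \id$, or upgrading the single correspondence $\{(U_1), (Z_0)\}$ to an $\bb{A}^1$-parametrized family of correspondences $\Gamma \subset U \times \bb{A}^1 \times X$, finite over $U \times \bb{A}^1$, whose fiber at $\tau = 1$ literally contains the diagonal $\Delta_U$ and whose $\tau = 0$ fiber is the disjoint union above. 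The Bertini step in Lemma \ref{geom pres} is where the characteristic-zero hypothesis enters; the triviality of $\Omega^1_{X/k}$ is used throughout to construct $N$-trivial embeddings for the finite morphisms that arise.
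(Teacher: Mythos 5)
Your geometric setup (Quillen normalization, Lemma \ref{geom pres}) matches the paper's, and you have correctly located the critical obstacle: the identity you prove lands in $\cal{F}(V')$ rather than $\cal{F}(U)$, and the composite $\pi^\ast \circ \tilde\sigma^\ast$ is not the identity. But the proposal does not close that gap, and neither of your two suggested repairs is viable as stated. Your option (a) fails because shrinking cannot make a relative curve $\pi : U \to V'$ into an $\bb{A}^1$-bundle (its fibers are open curves, not affine lines), so homotopy invariance never applies to $\pi^\ast$. Your option (b) is the right idea, but left unimplemented; and implementing it is the entire point of the theorem's proof. The missing step is the base change $-\times_{\bb{A}^{d-1}} U$: instead of viewing $p: X \to \bb{A}^1 \times \bb{A}^{d-1}$ as a relative curve over the $(d-1)$-dimensional base and then composing with $\pi^\ast$ afterwards, the paper forms $X \times_{\bb{A}^{d-1}} U \to U \times \bb{A}^1$, a relative curve whose base is the sought-after neighborhood $U$ itself. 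Then the diagonal $\Delta : U \to X \times_{\bb{A}^{d-1}} U$ is a genuine section, the fiber of $\pi$ over $U \times 0$ decomposes as $\Delta(U) \sqcup R_0$, and the section axiom gives $\delta_\ast = j_U^\ast$ on the nose, with no need to precompose with $\pi^\ast$ or invoke an $\bb{A}^1$-bundle structure. That base change is the one idea your proposal lacks; without it the morphism $a$ you write down does not satisfy the commutativity you need.

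A secondary issue: you invoke Lemma \ref{ind of embedding} to make the transfers unambiguous, but that lemma requires oriented weak transfers, whereas Theorem \ref{semilocal efface} only assumes weak transfers. The paper's proof sidesteps this by not claiming independence of embedding; it instead fixes one $N$-trivial embedding of $\pi$ (produced via Lemmas \ref{N trivial no kahler}, \ref{N trivial smooth BC}, \ref{N trivial A1 stable}) and uses the induced embeddings and trivializations consistently for $\delta_\ast$, $(r_0)_\ast$, $(f_1)_\ast$. You should do the same; otherwise you are proving a weaker statement than the one claimed.
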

 
 \begin{corollary} \label{semilocal inj} For $k$ and $\cal{F}$ as in Theorem \ref{semilocal efface}, and $U$ a nonempty open subscheme of a smooth semilocal $k$-scheme $S$, the map $\cal{F}(S) \to \cal{F}(U)$ is a split monomorphism.  In particular the map $\cal{F}(\spec ({\cal{O}}_{X,S})) \to \cal{F}(\eta_X)$ is a split monomorphism. \end{corollary}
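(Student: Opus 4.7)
The plan is to deduce both assertions from Theorem \ref{semilocal efface} by a limit argument. For the first, I may assume the smooth semilocal $k$-scheme of the statement is connected and write it as $\Sigma = \spec R$ with $R = {\cal{O}}_{Y, T}$ for some connected smooth affine $k$-scheme $Y$ of finite type and finite set $T \subset Y$; after shrinking $Y$ to a neighborhood of $T$, $\Omega^1_{Y/k}$ is free, so this persists for every affine open $V \subset Y$ containing $T$. Then $\Sigma = \varprojlim_{V \supset T} V$, and by definition $\cal{F}(\Sigma) = \varinjlim_V \cal{F}(V)$. A nonempty open $U \subset \Sigma$ has the form $\Sigma \setminus V(I)$; extending generators of $I$ to a sufficiently large $V_0 \supset T$ yields a closed subscheme $\tilde Z \subset V_0$ with $U = \varprojlim_{V \subset V_0}(V \setminus \tilde Z)$ and $\cal{F}(U) = \varinjlim_V \cal{F}(V \setminus \tilde Z)$. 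For each such $V$, Theorem \ref{semilocal efface} furnishes a neighborhood $W_V \subset V$ of $T$ and a map $a_V : \cal{F}(V \setminus \tilde Z) \to \cal{F}(W_V)$ with $a_V \circ j_{V \setminus \tilde Z}^* = j_{W_V}^*$ on $\cal{F}(V)$; composing with the structural map $\cal{F}(W_V) \to \cal{F}(\Sigma)$ produces $b_V : \cal{F}(V \setminus \tilde Z) \to \cal{F}(\Sigma)$.

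First I would establish injectivity of $\cal{F}(\Sigma) \to \cal{F}(U)$. If $\xi = [\xi_V] \in \cal{F}(\Sigma)$ vanishes in $\cal{F}(U)$, then $\xi_V|_{V' \setminus \tilde Z} = 0$ for some $V' \subset V$ containing $T$, and applying $a_{V'}$ gives $\xi_V|_{W_{V'}} = a_{V'}(0) = 0$, so $\xi = 0$ in $\cal{F}(\Sigma)$. Next I would define a retraction $b : \cal{F}(U) \to \cal{F}(\Sigma)$ on representatives by $b([\beta_V]) := b_V(\beta_V)$. Well-definedness — the equality $b_V(\beta_V) = b_{V'}(\beta_V|_{V' \setminus \tilde Z})$ in $\cal{F}(\Sigma)$ for $V' \subset V$ — reduces via the injectivity just proved to an equality of images in $\cal{F}(U)$, which one unwinds in the colimit using the defining splitting property of $a_V$ on the image of $\cal{F}(V)$. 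Given well-definedness, the retraction identity $b \circ (\cal{F}(\Sigma) \to \cal{F}(U)) = \id$ is immediate: for $\xi = [\xi_V]$, $b(\xi|_U) = b_V(\xi_V|_{V \setminus \tilde Z}) = [j_{W_V}^*(\xi_V)] = [\xi_V|_{W_V}] = \xi$.

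For the ``in particular'' statement, write $\cal{F}(\eta_X) = \varinjlim_U \cal{F}(U)$ over nonempty open $U \subset X$. Each such $U$ meets the semilocal scheme $\spec({\cal{O}}_{X, S})$ in a nonempty open $U''$, and the first part supplies a retraction $r_U : \cal{F}(U'') \to \cal{F}(\spec({\cal{O}}_{X, S}))$. The composites $s_U := r_U \circ (\cal{F}(U) \to \cal{F}(U''))$ assemble (again using the injectivity of the first part to force compatibility as $U$ shrinks) into the desired retraction $\cal{F}(\eta_X) \to \cal{F}(\spec({\cal{O}}_{X, S}))$; the retraction identity holds because for $\xi \in \cal{F}(\spec({\cal{O}}_{X, S}))$ represented by $\xi_V \in \cal{F}(V)$ with $V \supset S$ one has $U'' = \spec({\cal{O}}_{X, S})$, so $r_V$ is the identity on $\cal{F}(\spec({\cal{O}}_{X, S}))$ and $s_V(\xi_V) = \xi$. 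The main obstacle throughout is the compatibility of the splittings $a_V$ of Theorem \ref{semilocal efface} as $V$ shrinks: these depend on choices in the Geometric Presentation Lemma \ref{geom pres} and are not \emph{a priori} compatible under restriction, so the role of first establishing injectivity is precisely to force agreement in $\cal{F}(\Sigma)$ (resp. $\cal{F}(\spec({\cal{O}}_{X, S}))$) from the more accessible agreement in $\cal{F}(U)$ (resp. $\cal{F}(\eta_X)$).
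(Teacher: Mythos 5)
Your injectivity argument is correct, and it is essentially what the paper's one-line proof intends: shrink $X$ so that $\Omega^1_{X/k}$ is free, realize $\cal{F}(S)$ and $\cal{F}(U)$ as the filtered colimits $\varinjlim_V \cal{F}(V)$ and $\varinjlim_V \cal{F}(V\setminus Z)$, and observe that Theorem~\ref{semilocal efface}, applied to each $V$, forces any class of $\cal{F}(S)$ dying in $\cal{F}(U)$ to die in some $\cal{F}(W_V)$.

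However, your argument for the ``split'' part has a genuine gap. For well-definedness of the would-be retraction $b$, you reduce by injectivity to comparing images in $\cal{F}(U)$, namely $[a_V(\beta_V)|_{W_V\setminus Z}]$ versus $[a_{V'}(\beta_{V'})|_{W_{V'}\setminus Z}]$, and then invoke ``the defining splitting property of $a_V$ on the image of $\cal{F}(V)$.'' But that property ($a_V\circ j^* = j^*_{W_V}$) only controls $a_V$ on the image of $j^*\colon \cal{F}(V)\to\cal{F}(V\setminus Z)$, whereas the representative $\beta_V$ in the well-definedness check is an \emph{arbitrary} element of $\cal{F}(V\setminus Z)$. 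For such a $\beta_V$ there is no reason that the restriction $a_V(\beta_V)|_{W_V\setminus Z}$ should agree with $\beta_V|_{W_V\setminus Z}$, so the claimed unwinding in the colimit does not go through; the compatibility problem you correctly identify at the end is not actually resolved by the injectivity step. The same issue afflicts the assembly of the $s_U$ in the ``in particular'' case. For what it is worth: what is used from this corollary downstream (Corollaries~\ref{MVW 11.2}, \ref{V 4.19}, \ref{V 4.20}, Propositions~\ref{V 4.22}, \ref{V 4.24}) is only the monomorphism statement, which your argument does establish; a splitting would require either a canonical choice of $a_V$ compatible under restriction, or, as in Voevodsky's original setting where transfers exist along arbitrary relative zero-cycles, a transfer structure strong enough to produce the retraction intrinsically on the colimit.
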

 \begin{proof} We can find a neighborhood of $S$ such that $\Omega^1_{X/k} \cong \cal{O}^d_X$. \end{proof}

\begin{corollary} \label{MVW 11.2} For $k$ and $\cal{F}$ as in Theorem \ref{semilocal efface}, if $\cal{F}( \spec E) =0$ for all fields $E \supset k$, then $\cal{F}_{Zar} = 0$. \end{corollary}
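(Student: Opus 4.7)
The plan is to deduce this directly from Corollary \ref{semilocal inj} by reducing the vanishing of $\cal{F}_{Zar}$ to the vanishing of its stalks. A Zariski sheaf on the small Zariski site of each $X \in \Sm/k$ is zero if and only if all of its stalks vanish, so it suffices to show that $(\cal{F}_{Zar})_x = 0$ for every $X \in \Sm/k$ and every $x \in X$. By the additivity hypothesis on $\cal{F}$ (and since a smooth scheme is a disjoint union of its irreducible components), I may restrict to the irreducible component of $X$ containing $x$ and therefore assume $X$ is irreducible.

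Next, by the standing convention on values of $\cal{F}$ at non-finite-type schemes, the stalk is
\[
(\cal{F}_{Zar})_x \;=\; \operatorname*{colim}_{x \in U \subset X} \cal{F}(U) \;=\; \cal{F}(\spec \cal{O}_{X,x}).
\]
Since $\Omega^1_{X/k}$ is locally free, I may shrink $X$ to an affine neighborhood of $x$ on which the cotangent bundle is trivial (exactly as in the proof of Corollary \ref{semilocal inj}); this puts me in the situation of Theorem \ref{semilocal efface}. Corollary \ref{semilocal inj} then supplies a split monomorphism
\[
\cal{F}(\spec \cal{O}_{X,x}) \;\hookrightarrow\; \cal{F}(\eta_X).
\]

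Finally, $\cal{F}(\eta_X) = \operatorname*{colim}_{V \subset X} \cal{F}(V) = \cal{F}(\spec k(X))$ where $k(X)$ is the function field of $X$, a finitely generated field extension of $k$. The hypothesis $\cal{F}(\spec E) = 0$ for every field $E \supset k$ therefore forces $\cal{F}(\eta_X) = 0$, and the split injection above yields $\cal{F}(\spec \cal{O}_{X,x}) = 0$. Since this holds for every $x \in X$ and every $X$, all Zariski stalks of $\cal{F}$ vanish, so $\cal{F}_{Zar} = 0$.

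There is essentially no obstacle here: the whole argument is a routine chain of reductions, and the only nontrivial input—the splitting of $\cal{F}(\spec \cal{O}_{X,S}) \to \cal{F}(\eta_X)$—has already been packaged in Corollary \ref{semilocal inj}. The one point requiring minor care is the passage to an affine neighborhood with trivialized cotangent bundle needed to apply Theorem \ref{semilocal efface}, but this is harmless because we are only asserting vanishing of the stalk at $x$.
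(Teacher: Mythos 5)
Your proof is correct and is exactly the argument the paper leaves implicit: shrink to an irreducible affine neighborhood with trivialized cotangent bundle, identify the Zariski stalk with $\cal{F}(\spec \cal{O}_{X,x})$, and invoke the split injection $\cal{F}(\spec \cal{O}_{X,x}) \hookrightarrow \cal{F}(\eta_X) = \cal{F}(\spec k(X)) = 0$ from Corollary \ref{semilocal inj}.
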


\begin{corollary} \label{V 4.19} \label{MVW 22.8} For $k$ and $\cal{F}$ as in Theorem $\ref{semilocal efface}$, and $U$ a nonempty open subscheme of a smooth $k$-scheme $X$, the restriction map $\cal{F}_{Zar}(X) \to \cal{F}_{Zar}(U)$ is a monomorphism. \end{corollary}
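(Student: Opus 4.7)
The plan is to reduce the statement to injectivity of a specialization map at each point of $X$, and then invoke Corollary \ref{semilocal inj}.

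First I would reduce to the case $X$ irreducible. Since $X$ is smooth, its connected components are irreducible, and additivity of $\cal{F}$ together with the Zariski sheaf property of $\cal{F}_{Zar}$ decomposes the restriction map along those components. (This tacitly assumes $U$ meets each component; otherwise the statement is false, so I read ``nonempty'' in that stronger sense.) Write $\eta = \eta_X$ for the generic point of $X$; since $U$ is nonempty open in an irreducible space, $\eta \in U$.

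Now let $\alpha \in \cal{F}_{Zar}(X)$ satisfy $\alpha|_U = 0$. Because $\cal{F}_{Zar}$ is a Zariski sheaf, it suffices to show that the germ $\alpha_x$ vanishes in $(\cal{F}_{Zar})_x$ for every $x \in X$. The case $x \in U$ is immediate. For $x \in X \setminus U$, use that sheafification does not change stalks on the small Zariski site, so $(\cal{F}_{Zar})_x = \cal{F}_x$, which by the extended definition of $\cal{F}$ on non--finite type schemes recorded earlier in the section equals $\cal{F}(\spec \cal{O}_{X,x})$. Similarly $(\cal{F}_{Zar})_\eta = \cal{F}(\eta_X)$, and the specialization map from $x$ to $\eta$ is identified with the restriction map $\cal{F}(\spec \cal{O}_{X,x}) \to \cal{F}(\eta_X)$. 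By Corollary \ref{semilocal inj}, applied to the smooth semilocal scheme $\spec \cal{O}_{X,x}$, this restriction map is a split monomorphism.

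Finally, because $\eta \in U$, the composite $\cal{F}_{Zar}(X) \to (\cal{F}_{Zar})_\eta$ factors through $\cal{F}_{Zar}(U)$, so the vanishing of $\alpha|_U$ forces $\alpha_\eta = 0$. Combined with the split injectivity above, this gives $\alpha_x = 0$, and we conclude $\alpha = 0$. The only substantive input is Corollary \ref{semilocal inj}; everything else is formal bookkeeping with stalks and Zariski sheafification. I do not anticipate any real obstacle beyond pinning down the implicit density hypothesis on $U$.
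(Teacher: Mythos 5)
Your argument is correct, and it is the intended one (the paper leaves this corollary without a written proof, but the surrounding corollaries and the cited analogues \cite[Lem.~22.8]{MVW}, \cite[Cor.~4.19]{CTPST} are proved exactly this way). Reducing to the stalk-level generization maps $(\cal{F}_{Zar})_x = \cal{F}(\spec \cal{O}_{X,x}) \to \cal{F}(\eta_X) = (\cal{F}_{Zar})_\eta$ and quoting Corollary~\ref{semilocal inj} is precisely the right move; the identifications of sheafified stalks with presheaf stalks and then with the values on (pro-open) semilocalizations all check out given the conventions set up at the start of Section~3. You are also right to flag that ``nonempty'' must be read as ``meeting every component'' (equivalently dense, since $X$ is smooth hence its components are irreducible) — the statement is false otherwise, and the analogue in \cite{MVW} is stated for dense opens.
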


\begin{corollary} \label{V 4.20} Let $k$ be a perfect infinite field, and let $\varphi : \cal{F} \to \cal{G}$ be a morphism of homotopy invariant presheaves of abelian groups on $\Sm / k$ with weak transfers for affine varieties.
Suppose that for any field extension $E \supset k$ the morphism $\varphi_E : \cal{F} (\spec E) \to \cal{G} (\spec E)$ is an isomorphism.  Then the morphism of associated sheaves $\varphi_{Zar} : \cal{F}_{Zar} \to \cal{G}_{Zar}$ is an isomorphism.  \end{corollary}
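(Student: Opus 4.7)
My plan is to reduce the statement to Corollary \ref{MVW 11.2} by looking at the kernel and cokernel of $\varphi$. Concretely, I would form $\cal{K} := \ker \varphi$ and $\cal{C} := \coker \varphi$ in the category of presheaves of abelian groups on $\Sm/k$ with weak transfers for affine varieties. The first step is to verify these are homotopy invariant. By the basic properties recorded after Definition \ref{PSOWT}, the category of homotopy invariant presheaves of abelian groups with weak transfers for affine varieties is abelian and its inclusion into all such presheaves is exact, so $\cal{K}$ and $\cal{C}$ inherit both the weak transfer structure and homotopy invariance from $\cal{F}$ and $\cal{G}$.

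Second, because evaluation at a field $E\supset k$ (interpreted as $\varinjlim$ over finite type open subschemes of a model) is exact on abelian-group-valued presheaves, the hypothesis that $\varphi_E$ is an isomorphism for every field extension $E/k$ immediately yields
\[
\cal{K}(\spec E) \;=\; 0 \;=\; \cal{C}(\spec E)
\]
for all such $E$.

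Third, I apply Corollary \ref{MVW 11.2} to each of $\cal{K}$ and $\cal{C}$, which are homotopy invariant presheaves with weak transfers for affine varieties on $\Sm/k$ with $k$ perfect and infinite. This gives $\cal{K}_{Zar}=0$ and $\cal{C}_{Zar}=0$.

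Finally, since Zariski sheafification is exact, it commutes with kernels and cokernels of presheaf maps, so $\ker(\varphi_{Zar}) \cong \cal{K}_{Zar} = 0$ and $\coker(\varphi_{Zar}) \cong \cal{C}_{Zar} = 0$, and hence $\varphi_{Zar}$ is an isomorphism. The only mildly delicate point is the assertion that $\cal{K}$ and $\cal{C}$ are homotopy invariant presheaves with weak transfers for affine varieties; this is exactly the content of the ``Basic properties'' paragraph following Definition \ref{PSOWT}, so no new argument is required. In short, all of the real geometric work has already been done in establishing Theorem \ref{semilocal efface} and its Corollary \ref{MVW 11.2}, and this statement is a formal consequence.
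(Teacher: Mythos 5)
Your argument is correct and matches the paper's proof essentially verbatim: both pass to $\ker\varphi$ and $\coker\varphi$, note they inherit the homotopy invariant weak-transfer structure, observe they vanish on fields, and use exactness of Zariski sheafification to conclude. The only cosmetic difference is that you invoke Corollary~\ref{MVW 11.2} explicitly, whereas the paper cites Theorem~\ref{semilocal efface} directly, but Corollary~\ref{MVW 11.2} is an immediate consequence of that theorem, so the two are the same argument.
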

\begin{proof} The presheaves $\ker \varphi$ and $\coker \varphi$ inherit structures of homotopy invariant presheaves with weak transfers for affine varieties, so Theorem $\ref{semilocal efface}$ applies to them.  Since sheafification is exact it suffices to show the Zariski sheafification of the kernel and cokernel both vanish.  By hypothesis we know they vanish on fields, hence their stalks vanish by Theorem $\ref{semilocal efface}$. \end{proof}

\begin{remark} These results were obtained for $K_0$-presheaves by Mark Walker \cite[5.28, 5.30]{WalkerThesis}.
\end{remark}

The proof of the following result is postponed because the methods are slightly more involved (see $\ref{semilocal MV2}$), in particular the orientation is involved.  
 
 \begin{corollary} Let $k$ be a perfect infinite field.  Suppose $\cal{F}$ is a homotopy invariant presheaf of abelian groups on ${\bf{Sm}} /k$ with oriented weak transfers for affine varieties.
Let $S$ be a semilocal smooth $k$-variety and $S = U_0 \cup V$ a Zariski cover.
Then there exists an open $U \subset U_0$ such that $S=U \cup V$ and the following sequence is exact:
 $$0 \to \cal{F}(X) \to \cal{F}(U) \oplus \cal{F}(V) \to \cal{F}( U \cap V) \to 0 .$$  \end{corollary}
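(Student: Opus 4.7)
The plan is to adapt Voevodsky's proof of semilocal Mayer--Vietoris for presheaves with transfers to our setting of oriented weak transfers, using the geometric presentation lemma (Lemma \ref{geom pres}) to produce a finite correspondence over $\A \times S$ and absorbing all ambiguity in the choice of $N$-trivial embedding via the orientation hypothesis (Lemma \ref{ind of embedding}). Injectivity of the first map of the sequence will be automatic: on each connected (hence irreducible) component of $S$, the generic point lies in the nonempty open $V$, so Corollary \ref{semilocal inj} gives $\cal{F}(S) \hookrightarrow \cal{F}(V)$, and composing with the projection from $\cal{F}(U) \oplus \cal{F}(V)$ then yields the desired injectivity.

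For middle exactness and right surjectivity, I would first realize $S$ as the semilocalization of a smooth affine $X/k$ at a finite set $S'$ and shrink so that $\Omega^1_{X/k}$ is free. Next, extend the Zariski cover $S = U_0 \cup V$ to a cover $X = \tilde U_0 \cup \tilde V$, and set $Z := X \setminus \tilde V$. Applying Lemma \ref{geom pres} to the pair $(X,Z)$ and the augmentation determined by $S'$ will produce $s \in \cal{O}(X)$ realizing $X$ as a finite $\cal{O}(S)[s]$-module such that: the fiber at $s=0$ decomposes as $S \sqcup Y$ with $Y$ smooth, $\cal{O}(S)$-finite, generically \'etale, and disjoint from $Z$; and the fiber at $s=1$ is also smooth, $\cal{O}(S)$-finite, generically \'etale, and disjoint from $Z$. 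Since the cotangent sheaves of source and target are trivial, these fibers will admit $N$-trivial embeddings, and by orientation the associated weak transfers will be well-defined and embedding-independent.

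The factorization-through-rational-equivalence lemma applied to the morphism $X \to \A \times S$ induced by $s$ then yields $(f_0)_* \circ g_0^* = (f_1)_* \circ g_1^*$ on $\cal{F}(X)$. On the $s=0$ side, the canonical $S$-component contributes the identity via property (2) of Definition \ref{PSOWT}, while $Y$ contributes a transfer factoring through $\cal{F}(\tilde V)$; on the $s=1$ side the entire fiber lies in $\tilde V$, so the corresponding term also factors through $\cal{F}(\tilde V)$. Passing to the semilocal limit over neighborhoods of $S'$ contained in $\tilde U_0$, one extracts the desired open $U \subset U_0$ (a neighborhood of those closed points of $S$ that lie in $Z$) together with a splitting map $\cal{F}(V) \to \cal{F}(U)$; feeding in classes on $U$, $V$ and their mismatch on $U \cap V$ will then yield both a section to the difference map $\cal{F}(U) \oplus \cal{F}(V) \to \cal{F}(U \cap V)$ and the required middle exactness.

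The hard part will be controlling smoothness and $N$-triviality simultaneously at $s=0$ and $s=1$: the residual scheme $Y$ at $s=0$ and the full fiber at $s=1$ must each be smooth, $\cal{O}(S)$-finite, generically \'etale, admit $N$-trivial embeddings, and be disjoint from $Z$. Lemma \ref{geom pres} is designed precisely to deliver these properties, using Bertini to guarantee smoothness at both chosen fibers and purity of the branch locus to obtain generic \'etaleness. The orientation will be essential where transfers defined via \emph{a priori} distinct $N$-trivial embeddings are combined into the single splitting producing the Mayer--Vietoris sequence.
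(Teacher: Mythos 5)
The paper proves this corollary only later (as Corollary 4.10, after the Nisnevich excision theorem), and it does so by a quite different route: it chooses a smooth divisor $Z$ through the closed points of $S$ not in $U_0$, sets $U := S - Z$, applies Theorem \ref{general Nis invariance} to the open immersion $j : V \into S$ with $Z_2 = Z$, and then uses the resulting map $\psi : \cal{F}(U \cap V) \to \cal{F}(U)$ together with the $9$-lemma. The essential output of Theorem \ref{general Nis invariance} is that $\oline{\psi}$ is a \emph{two-sided inverse} to the natural map $\cal{F}(U)/\cal{F}(S) \to \cal{F}(U \cap V)/\cal{F}(V)$, i.e.\ properties (2) \emph{and} (3) of that theorem, and this is exactly what feeds the $9$-lemma.

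Your proposal instead tries to reach Mayer--Vietoris directly from the geometric presentation lemma (Lemma \ref{geom pres}), essentially redoing the argument of Theorem \ref{semilocal efface}. That argument produces a factorization of the form $j_U^* = a \circ j_{V}^*$ on $\cal{F}(S)$, i.e.\ a map $a : \cal{F}(V) \to \cal{F}(U)$ splitting the restriction from $\cal{F}(S)$. This is a genuinely one-sided (effaceability) statement. It is weaker than what is needed: for Mayer--Vietoris you must show that the map $\cal{F}(U)/\cal{F}(S) \to \cal{F}(U\cap V)/\cal{F}(V)$ is an isomorphism, and for that you need a map out of $\cal{F}(U \cap V)$ (not merely out of $\cal{F}(V)$) with compatibility \emph{in both directions}. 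Your final sentence, ``feeding in classes on $U$, $V$ and their mismatch on $U \cap V$ will then yield both a section to the difference map \ldots and the required middle exactness,'' is precisely where the argument is unsupported: nothing in the presentation-lemma construction gives you a well-defined candidate for that section, let alone verifies that it inverts the natural map modulo global sections.

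Concretely, the missing ingredient is the third commutation property of $\psi$ in Theorem \ref{general Nis invariance}, whose proof in the paper requires the additional auxiliary sections $t_1$ and $u_1$ built in Proposition \ref{Nis corr} on both the source and the target of the excision morphism, via the relative Picard group / compactification technology of Suslin--Voevodsky (norms of sections along $\oline{f}$ and pullback of sections along $f$). Lemma \ref{geom pres} alone only hands you a single relative curve presentation of $X$ over $\A \times S$ and does not supply the ``matched pair'' of correspondences over $S$ and over $V$ whose compatibility is what makes $\oline{\psi}$ a two-sided inverse. So the step passing from the effaceability relation $(f_0)_* \circ g_0^* = (f_1)_* \circ g_1^*$ to the full Mayer--Vietoris sequence is a real gap, not just a detail to be filled in, and in the paper that gap is exactly what Sections \ref{smooth corr}--\ref{Nis section} are built to close. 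Your treatment of injectivity (via Corollary \ref{semilocal inj}) and your remarks on smoothness, generic \'etaleness, $N$-triviality, and the role of orientation are all sound; the issue is entirely with surjectivity and middle exactness.
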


\begin{proof}[Proof of Theorem $\ref{semilocal efface}$] By Quillen's trick \cite{Q}, we can find a finite surjective morphism $p : X \to \mathbb{A}^d$ such that composing with the projection away from the (say) first factor gives a morphism $X \to \mathbb{A}^{d -1}$ of relative dimension 1, smooth at $S$, and so that $Z$ is finite over $\mathbb{A}^{d-1}$.  For any open $U \subset X$ (which we will assume contains $S$), by applying $- \times_{\mathbb{A}^{d-1}} U$, we obtain the following diagram:
$$\xym{ Z \times_{\mathbb{A}^{d-1}} U \ar[r]^-{\text{cl.imm.}} \ar[rd]_-{\text{finite}} & X \times_{\mathbb{A}^{d-1}} U \ar[d]  \ar[r] & \A \times U \cong \mathbb{A}^d \times_{\mathbb{A}^{d-1}} U \ar[ld] \\ & U \ar@/_1pc/[u]_-\Delta }$$

We have an exact sequence of sheaves $0 \to p^\ast(\Omega^1_{\mathbb{A}^{d-1}/k}) \to \Omega^1_{X/k} \to \Omega^1_{ X / \mathbb{A}^{d-1}} \to 0$, all of which are locally free near $S$.  Since $\Omega^1_{\mathbb{A}^{d-1}/k} \cong {\cal{O}}^{d-1}_{\mathbb{A}^{d-1}}$,  taking determinants shows $\Omega^1_{ X / \mathbb{A}^{d-1}} \cong \cal{O}_X$.  As $X$ is affine, the sequence also gives $\Omega^1_{X/k} \cong {\cal{O}}^d_X$.  (Hence also $\Omega^1_{U/k} \cong {\cal{O}}^d_U$.)
 
By the geometric presentation lemma (Lemma $\ref{geom pres}$), there is a morphism $X \times_{\mathbb{A}^{d-1}} U \to \mathbb{A}^1$ such that the induced morphism $\pi : X \times_{\mathbb{A}^{d-1}} U \to U \times \mathbb{A}^1$ of $U$-schemes has the following properties:

\begin{enumerate}
\item $\pi$ is finite (hence flat, since source and target are regular of the same dimension);
\item $\pi^{-1}(U \times 0) = \Delta(U) \coprod R_0$;
\item $\delta: \Delta(U) \to U$ \'{e}tale;
\item $R_0$ is regular (hence smooth), $R_0 \subset (X \setminus Z) \times_{\bb{A}^{d-1}} U $, and $r_0 : R_0 \to U$ is generically \'{e}tale; and
\item $\pi^{-1}(U \times 1) =: F_1$ is regular (hence smooth), $F_1 \subset (X \setminus Z) \times_{\bb{A}^{d-1}} U$, and $f_1: F_1 \to U$ generically \'{e}tale.
\end{enumerate}

We claim the morphism $\pi$ admits an $N$-trivial embedding.  By Lemma \ref{N trivial A1 stable} it suffices to show $X \times_{\mathbb{A}^{d-1}} U \to U$ admits an $N$-trivial embedding.  Since $U \to \bb{A}^{d-1}$ is smooth, by Lemma \ref{N trivial smooth BC}, it suffices to show $X \to \bb{A}^{d-1}$ admits an $N$-trivial embedding.  This is a consequence of Lemma \ref{N trivial no kahler}.  We choose such an embedding.  The geometric situation is summarized in the following diagram.

$$\xym{ X \times_{\mathbb{A}^{d-1}} U \ar[r]^-i \ar[rd]_-\pi  & X \times_{\mathbb{A}^{d-1}} U \times \A \ar[d]^-{{pr}_{23}} \\
& U \times \A }$$

Hence we have a commutative diagram in which the paired arrows are inverse isomorphisms, and the unlabeled arrows pointing down are induced by the base change $0 \into \A$:

$$\xym{
\cal{F}(X \times \A) \ar[r]^-{p^*_{13}} \ar@<+1ex>[d] & \cal{F}(X \times_{\mathbb{A}^{d-1}} U \times \A) \ar[r]^-{i^*} \ar@<+1ex>[d] & \cal{F}(X \times_{\mathbb{A}^{d-1}} U) \ar[d] \ar[r]^-{\pi_\ast} & \cal{F}(U \times \A) \ar@<+1ex>[d]  \\
\cal{F}(X) \ar[u]^-{{p^X_1}^*} \ar[r]_-{p^*_1} & \cal{F}(X \times_{\mathbb{A}^{d-1}} U) \ar[r]_-{i_0^*} \ar[u]^-{p^*_{12}}& \cal{F}(\Delta) \oplus \cal{F}(R_0)  \ar[r]_-{ \delta_\ast + {r_0}_\ast } & \cal{F}(U) \ar[u]^-{{p^U_1}^**} }$$

We omit the choices of $N$-trivial embedding and trivializations in the notation, with the understanding that
the embeddings are induced from an $N$-trivial embedding of $\pi$, and whichever trivialization is used to define $\pi_*$ is also used to define $\delta_*, {r_0}_*, {f_1}_*$.  Note that $\delta_\ast \circ {pr}_{\cal{F}(\Delta)} (i_0^*) \circ p^*_1 = j_U^* : \cal{F}(X) \to \cal{F}(U)$.

We can consider the similar diagram induced by $1 \into \A$, replacing $\cal{F}(\Delta) \oplus \cal{F}(R_0)$ with $\cal{F}(F_1)$.  Then we conclude $(\delta_\ast + {r_0}_\ast) \circ i_0^* \circ p^*_1 = {f_1}_\ast \circ i_1^* \circ p^*_1 : \cal{F}(X) \to \cal{F}(U)$, as both are equal to ${p^U_1}^** \circ \pi_\ast \circ i^* \circ p^*_{13} \circ {p^X_1}^*$.

Since $R_0$ and $F_1$ avoid $Z$, we have a commutative diagram (for $C = R_0$ or $F_1$, $c= r_0, f_1$):
$$\xym{
\cal{F}(X) \ar[r]^-{p^*_1} \ar[d]_-{j_{X \setminus Z}^*} & \cal{F}(X \times_{\mathbb{A}^{d-1}} U) \ar[r]^-{\cal{F}({i_C})} \ar[d]_-{(j_{X \setminus Z} \times 1)^*} & \cal{F}(C) \ar[r]^{c_\ast} & \cal{F}(U) \\
\cal{F}(X \setminus Z) \ar[r]_-{p^*_1} & \cal{F}(X \setminus Z \times_{\mathbb{A}^{d-1}} U) \ar[ru]_-{i_C^*} }$$

We conclude the map $({f_1}_\ast \circ i_1^* \circ p^*_1) - ({r_0}_\ast \circ {pr}_{\cal{F}(R_0)} (i_0^*) \circ p^*_1 : \cal{F}(X) \to \cal{F}(U)$ factors through $j_{X \setminus Z}^*$.  By our previous calculations, this map is exactly $j_U^*$.

Observe we only used transfers along the finite generically \'{e}tale morphism $\pi : X \times_{\bb{A}^{d-1}} U \to U \times \A$ and its base changes along $0, 1 \into \A$, which are again generically \'{e}tale by purity of the branch locus.  \end{proof}

\section{Constructing smooth correspondences} \label{smooth corr}
\subsection{The main idea} \label{main idea}

In this section we analyze the geometric constructions used by Voevodsky \cite{CTPST}.  A presheaf $\cal{F}$ has the structure of a \textit{pretheory} if given any smooth relative curve $X \to S$ with $X,S \in \textbf{Sm} / k$ and any relative zero-cycle $\cal{Z}$ on $X$, there is a transfer (wrong-way) homomorphism $\phi_\cal{Z} : \cal{F}(X) \to \cal{F}(S)$.  The collection of transfer homomorphisms is required to satisfy some natural conditions.  This is a generalization of the more widely used notion of presheaf with transfers; presheaves with transfers are automatically pretheories of homological type.

Let $c_0(X/S)$ denote the group of relative zero-cycles on $X$ over $S$, and $h_0(X/S)$ the quotient of $c_0(X/S)$ by the relation of rational equivalence.  If the pretheory $\cal{F}$ is in addition homotopy invariant, then the homomorphism $\phi_- : c_0 (X/S) \to \Hom (\cal{F}(X) , \cal{F}(S) )$ factors through $h_0(X/S)$.

The main tool used to construct elements in $h_0(X/S)$ and to show relations among them is the following theorem of Suslin-Voevodsky \cite[3.1]{SV}: suppose $S$ is affine and $X \to S$ as above is quasi-affine and admits a compactification $\oline{X} \to S$ to a proper morphism of relative dimension 1, such that  $\oline{X}$ is normal and $X_\infty:= \oline{X} - X$ admits an $S$-affine neighborhood.  Then $h_0(X/S)$ is isomorphic to the relative Picard group $\pic (\oline{X}, X_\infty)$, the group of isomorphism classes of pairs $(L, t)$, where $L$ is a line bundle on $\oline{X}$ and $t : L |_{X_\infty} \cong \cal{O}_{X_\infty}$ is a trivialization of the restriction.  The map $\pic (\oline{X}, X_\infty) \to h_0(X/S)$ is given by lifting the trivialization to a rational section, then taking the zero scheme.  The ambiguity in the choice of lift corresponds exactly to $\A$-homotopy.

To ensure the zero schemes are \textit{effective} relative divisors, we will need to lift elements of the relative Picard group to global regular sections.  We will also need to ensure the zero schemes of these sections are smooth and multiplicity-free; this is accomplished by applying a Bertini theorem and we are thus restricted to characteristic zero.  The explicit $\A$-homotopies constructed in Proposition $\ref{Zariski homotopies}$ give the Mayer-Vietoris sequence for open subschemes of $\A$ (Theorem $\ref{open affine line MV}$).  In the next section similar constructions will be used to show that whether a local section of the presheaf $\cal{F}$ extends through a closed subset is invariant under Nisnevich covers (Theorem $\ref{general Nis invariance}$).

In the end the results of this section are restricted to the case where $k$ is of characteristic zero, though several intermediate results do not require this assumption.

\subsection{Mayer-Vietoris for open subschemes of the affine line}
We establish the Mayer-Vietoris sequence for open subschemes of the affine line.
As a consequence we compute the cohomology of open subschemes of $\A$ with coefficients in homotopy invariant presheaves with oriented weak transfers.

\begin{proposition} \label{A1 field injectivity1} Let $k$ be a field.  Let $V \subset U \subset \bb{A}^1_k$ be nonempty open subschemes of the affine line over $k$.  On $U \times \bb{P}^1$ there exist a line bundle $L$ and global sections $s_0, s_1 \in H^0(U \times \bb{P}^1, L)$ with the following properties:
\begin{enumerate}
\item the zero scheme $Z(s_0)$ is the disjoint union $\Delta_U \coprod R_0$;
\item $R_0$ is smooth, $U$-finite, and contained in $U \times V$;
\item the zero scheme $Z(s_1)$ is smooth, $U$-finite, and contained in $U \times V$;
\item $s_0 |_{U \times (\bb{P}^1 - U)} = s_1 |_{U \times (\bb{P}^1 - U)}$; and
\item the intersection $Z(s_0) \cap Z(s_1)$ is transverse.
\end{enumerate}
\end{proposition}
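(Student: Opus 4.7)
The plan is to take $L = p_2^{\ast} \cal{O}_{\bb{P}^1}(N)$ for $N \gg 0$ and exhibit $s_0, s_1$ as polynomials in $t$ in the standard affine chart of $\bb{P}^1$. After an affine change of coordinate I may assume $\infty \notin U$, so $U = \A \setminus D$ and $V = U \setminus D'$ for finite closed subschemes $D \subset \A$ and $D' \subset U$. A section of $L$ then corresponds to a polynomial $F(t,u) \in \cal{O}(U)[t]$ of degree $\leq N$ in $t$, whose leading coefficient $a_N$ records the value at $\infty$. I would parametrize $s_0$ and $s_1$ by affine subspaces of this coefficient space, cut out by the linear condition (4) (once the boundary values at $\infty$ and at each $p \in D$ are prescribed, say equal to $1$), while securing the remaining geometric conditions (smoothness, $U$-finiteness, containment in $U \times V$, transversality) by a Bertini-type argument in characteristic zero.

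First construct $s_1$. Require $s_1$ monic of degree $N$ in $t$ with $s_1(p,u) = 1$ for every $p \in D$; this carves out an affine system $W_1$ which is base-point-free on $U \times \bb{P}^1$: the imposed values are nonzero on $U \times (\{\infty\} \cup D)$, while over any point of $U \times (\A \setminus D)$ any prescribed value can be achieved by varying the free coefficients. The incidence correspondence $\{(F,x) : F(x) = 0\} \subset W_1 \times (U \times \bb{P}^1)$ is therefore smooth and dominates $W_1$, so generic smoothness (valid in characteristic zero) produces a dense open of $W_1$ consisting of $F$'s with smooth $U$-finite vanishing locus. The further open condition $F(q,u) \in \cal{O}(U)^{\times}$ for each $q \in D'$ (enforcing $Z(F) \subset U \times V$) is nonempty---e.g., an $F$ with $F(q,u) = 1$ satisfies it---so a generic $F \in W_1$ serves as $s_1$.

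For $s_0$, write $s_0 = (t - u)\,g(t,u)$ with $g$ monic of degree $N-1$. The leading coefficient of $s_0$ is then automatically $1$, matching that of $s_1$ at $\infty$; matching at $t = p$ for $p \in D$ becomes the linear constraint $g(p,u) = (p - u)^{-1}$, which is a well-defined unit in $\cal{O}(U)$ since $p \notin U$. The resulting affine system $W_0$ again has empty base locus, and the same Bertini argument produces a generic $g$ with $Z(g)$ smooth and $U$-finite. The open conditions $g(u,u) \ne 0$ (so $Z(g) \cap \Delta_U = \emptyset$, whence $Z(s_0) = \Delta_U \coprod R_0$ with $R_0 := Z(g)$) and $g(q,u) \ne 0$ for each $q \in D'$ (so $R_0 \subset U \times V$) then secure (1) and (2); smoothness of $\Delta_U \cong U$ is automatic.

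Finally, for transversality (5), after fixing a good $s_0$ I restrict the family $W_1$ to the smooth curve $Z(s_0) \subset U \times \A$; base-point-freeness persists on the curve, and a second application of Bertini provides a generic $s_1 \in W_1$ whose restriction to $Z(s_0)$ is reduced, i.e., cuts out a transverse zero-dimensional intersection. Since $k$ is infinite (being of characteristic zero, as in the running hypothesis of Section \ref{smooth corr}), one can select $s_0, s_1$ in the intersection of all the requisite nonempty Zariski opens. The main obstacle is this Bertini step in the non-projective, relative setting: one must verify empty base loci on $U \times \bb{P}^1$ (for (1)--(3)) and on $Z(s_0)$ (for (5)), so that generic smoothness of the incidence correspondence applies. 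In characteristic zero this is routine, and is precisely where the proof is restricted to $\charct(k) = 0$, cf.\ Subsection \ref{main idea}.
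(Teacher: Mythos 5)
Your argument follows the Bertini/general-position strategy the paper sketches in Remark~\ref{first bertini} \emph{after} the proposition, not the paper's actual proof of Proposition~\ref{A1 field injectivity1}. The paper's own proof is fully explicit: with $F$, $G$ the forms defining $\bb{A}^1 - U$ and $U-V$, it takes $s_0 := s_\Delta(s_\Delta FG + X_0^N)$ and then builds $s_1$ directly from a B\'ezout identity $a(x)f(x) + b(x)g(x) = 1$, setting $s_1(Q) := B(X)G(X)s_\Delta X_0^r + F(X)Q(X)$ for a suitable auxiliary polynomial $Q$, and checks all five properties by elementary computation (smoothness of $Z(s_1(Q))$ is verified by showing it is a graph; transversality reduces to $fq$ and $f^2q - b$ having no multiple roots). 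That argument does not invoke generic smoothness at any point.

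This matters because the proposition is stated for an \emph{arbitrary} field $k$, and it is used in that generality for Corollary~\ref{A1 field injectivity} (also stated for any field). Your proof, relying on generic smoothness of the incidence correspondence, is valid only over fields of characteristic zero, as you yourself flag. But your closing remark that this "is precisely where the proof is restricted to $\charct(k) = 0$" misattributes where the characteristic-zero hypothesis first enters the paper: it enters only in Proposition~\ref{Zariski homotopies} and Theorem~\ref{open affine line MV}, where the more elaborate section constructions genuinely need Bertini. For Proposition~\ref{A1 field injectivity1} itself, the paper deliberately avoids the characteristic restriction by constructing $s_0$, $s_1$ explicitly. As written, your proposal establishes a strictly weaker statement than the one claimed.

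Two smaller points. First, your parameter spaces $W_0$, $W_1$ of coefficient tuples $(a_0(u),\dots,a_N(u))$ are affine spaces over $\cal{O}(U)$, hence infinite-dimensional over $k$; to apply the standard Bertini theorem via an incidence correspondence you should first cut down to a finite-dimensional linear subsystem (e.g.\ bound the $u$-degrees of the coefficients) and check base-point-freeness persists; the paper's Remark~\ref{first bertini} avoids this by working with a single pencil $\PP_{s_0, \widetilde{s}}$. Second, your decomposition $s_0 = (t-u)\,g(t,u)$ with $g$ monic and $g(p,u) = (p-u)^{-1}$ is a clean reformulation of the paper's $s_0 = s_\Delta \cdot s_{R_0}$ and is perfectly sound; if you want a proof valid over all fields, the natural fix is to combine your setup with the paper's explicit choice of $s_1$ via B\'ezout, rather than Bertini.
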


\begin{proof} We use the standard coordinates $X_0, X_1$ on $\bb{P}^1$, so that $\infty \into \bb{P}^1$ is defined by the vanishing of $X_0$, and $X_1 \over X_0$ is the canonical coordinate on $\bb{A}^1_{X_0 \neq 0} \subset \bb{P}^1$.  Let $u$ denote the canonical coordinate on $U$.  Let $F$ denote a homogeneous form generating the ideal of $\bb{A}^1 - U \into \bb{P}^1$ with the reduced scheme structure, i.e., $F \in \Gamma(\bb{P}^1, \cal{O}_{\bb{P}^1}(\A - U))$.  Let $G$ denote a form generating the ideal of $U -V \into \bb{P}^1$ with the reduced structure, and let $s_\Delta = X_1 - uX_0$ denote the canonical section of the diagonal bundle $\cal{O}(\Delta_U)$ on $U \times \bb{P}^1$.  By rescaling we may assume $F(X_0, X_1) = X_1^{\deg F} + \ldots$ and $G(X_0, X_1) = X_1^{\deg G} + \ldots$.  We let $f,g$ denote the dehomogenizations of $F,G$.

Let $N = \deg(FG) +1$.  The zero scheme of the section $s_\Delta F(X)G(X) + X_0^N \in H^0(U \times \bb{P}^1, \cal{O}(N))$ has the following properties: it is disjoint from $\Delta_U$; it is disjoint from $U \times (\bb{P}^1 - V)$; it is the graph of a function $V \to \bb{A}^1$, hence it is smooth; and it is $U$-finite.

We set $s_0 := s_\Delta (s_\Delta FG + X_0^N) \in H^0(U \times \bb{P}^1, L)$.  Here $L = \cal{O}(\Delta_U) \otimes \cal{O}(N) \cong \cal{O}(N+1)$.  With this choice of $s_0$, properties (1) and (2) are satisfied.  We write $s_{R_0} := s_\Delta FG + X_0^N$.

Now we find sections $s_1 \in H^0(U \times \bb{P}^1, L)$ satisfying (3)-(5).  The method used here is explicit but does not readily generalize.  There exist $a(x), b(x) \in k[x]$ such that $a(x) f(x) + b(x) g(x) =1$ and with $\deg (b) < \deg(f)$ and $\deg(a) < \deg(g)$.  Homogenizing we have (for some $r>0$): $$A(X) F(X) s_\Delta + B(X) G(X) s_\Delta = s_\Delta \cdot X_0^{N-r}.$$

Now for any $q(x)$ of degree $\deg(g) +2$ such that $f(x)q(x) = x^N + \ldots$ (i.e., the leading coefficient is $1$), the section
$$s_1(Q) := B(X) G(X) s_\Delta X_0^r + F(X) Q(X) $$
has the following properties:
\begin{itemize}
\item along $X_0 = 0$, we have $s_1(Q) = X_1^N (= s_0 |_{U \times \infty})$;
\item along $F=0$, we have $s_1(Q) = s_\Delta X_0^N (= s_0 |_{U \times (\bb{A}^1 - U)} )$; and
\item along $G=0$, we have $s_1(Q) = F(X) Q(X)$.
\end{itemize}

The first two properties imply $s_1(Q) = s_0$ along $U \times (\bb{P}^1 -U)$.  The third implies $s_1(Q)$ generates along $U \times (U-V)$ for $q$ generic.  Now we show that having fixed $B,G,F$, we can choose $Q$ appropriately.

The first item shows $Z(s_1(Q) ) \cap (U \times \infty) = \emptyset$ for any $Q$.  Choosing $Q$ so that $Z(Q)$ is disjoint from $Z(GB)$, we see that $Z(s_1(Q)) \cap (U \times Z(GB)) \subset U \times (Z(F) \cap Z(GB))$.  But the relation $af+bg =1$ implies that $Z(F) \cap Z(GB) = \emptyset$, hence $Z(s_1(Q)) \cap (U \times Z(GB)) = \emptyset$ and $Z(s_1(Q))$ is the graph of the function $u = { f(x) q(x) \over b(x) g(x) } + x$.  Hence for general $Q(X)$, the zero scheme $Z(s_1(Q))$ is smooth and disjoint from $U \times Z(G) = U \times  (U-V)$.

We need to check $Q$ can be chosen so the intersections $Z(s_1(Q)) \cap \Delta_U$ and $Z(s_1(Q)) \cap R_0$, i.e., 
$$\{ u = {f(x) q(x) \over b(x) g(x)} +x \} \cap \{ u=x \} \text{, and} $$
$$\{ u = {f(x) q(x) \over b(x) g(x)} +x \} \cap \{ u= { 1 \over f(x) g(x)} +x \},$$
are transverse.  For general $q$ the polynomials $fq$ and $f^2q -b$ have no multiple roots.  (The zero scheme of $F$ is reduced.)
\end{proof}

\begin{remark} \label{first bertini} A more conceptual proof (valid only in characteristic zero) using methods which generalize goes as follows.  Let $s_0$ be as in the proof.  Consider the sheaf sequence:
$$0 \to \cal{I}_{U \times (\bb{P}^1 - V)} \otimes L \to L \to L |_{U \times (\bb{P}^1 - V)} \to 0 .$$

The bundle $\cal{I}_{U \times (\bb{P}^1 - V)} \otimes L$ induces the bundle $\cal{O}(1)$ on every fiber, hence its $R^1{pr_1}_*$ vanishes.  Since $U$ is affine, this implies the vanishing of $H^1$.  Therefore the sheaf sequence remains exact after applying $H^0(U \times \bb{P}^1, - )$.  (This also holds if we replace $\cal{I}_{U \times (\bb{P}^1 - V)}$ by $\cal{I}_{U \times (\bb{P}^1 - U)} $.)  The Picard group of $U \times (U-V)$ is trivial.  Then for any generator $g \in H^0(U \times (U-V), L |_{U \times (U-V)})$, the element $(s_0 |_{U \times (\bb{P}^1 -U)}, g) \in H^0(U \times (\bb{P}^1 - V), L |_{U \times (\bb{P}^1 - V)} )$ lifts to a section $\widetilde{s} \in H^0(U \times \bb{P}^1, L)$.

Choose such a generator $g$ and a lift $\widetilde{s}$.  Consider the pencil $\PP_{s_0, \widetilde{s}}$ determined by $s_0$ and $\widetilde{s}$.   First we observe this pencil does not contain $\Delta_U$ as a fixed component: $Z(\widetilde{s}) \not \supset \Delta_U$ since $\widetilde{s}$ does not vanish along $U \times (U -V)$, while $s_\Delta |_{U \times (U-V)}$ vanishes along $(U-V) \times (U-V) \neq \emptyset$.  If the pencil $\PP_{s_0, \widetilde{s}}$ contains $R_0$ as a fixed component, then writing $\widetilde{s} = s_1 \cdot s_{R_0}$, the pair $s_0 = s_\Delta, s_1$ for $L = \cal{O}(\Delta_U)$ satisfies the conclusion of the proposition.  So we may assume the pencil contains neither $\Delta_U$ nor $R_0$ as a fixed component.  Since we have the decomposition $Z(s_0) = \Delta_U \coprod R_0$ into irreducible components, this implies the pencil does not contain any fixed component.

Every element $s_t \in \PP_{s_0, \widetilde{s}}$ of the pencil has the property that $s_t |_{U \times (\bb{P}^1 - U)} = s_0 |_{U \times (\bb{P}^1 - U)}$.  Since the element $\widetilde{s} \in \PP_{s_0, \widetilde{s}}$ generates along $U \times (U-V)$, the same is true of the general element of the pencil.  

Over a field of characteristic zero, the general element of a finite dimensional linear system on smooth quasi-projective variety is smooth away from the base locus of the linear system \cite[Cor.~10.9, Rmk.~10.9.2]{Hart}.  However $Z(s_0)$ is smooth everywhere, hence the general element of $\PP_{s_0, \widetilde{s}}$ is smooth everywhere, i.e., even along the base locus.  Note that it is enough to have a Bertini theorem over algebraically closed fields: if $k \into \oline{k}$ is an infinite subfield, the dense open locus of suitable sections in $\bb{P}^1$ contains many $k$-points essentially because $\bb{P}^1$ is a rational variety.  For details see \cite[Prop.~2.8]{HDbertini}.

Finally we explain how $\widetilde{s}$ can be modified to guarantee the intersection $Z(s_0) \cap Z(\widetilde{s})$ is transverse.  Let $e \in H^0 (\cal{I}_{U \times (\PP^1  - V)} \otimes L)$ be a section whose zero scheme is ``horizontal," i.e., independent of $u \in U$.  Then $Z(s_0) \cap Z(e)$ is transverse since the components of $Z(s_0)$ are graphs of functions $V \to U$.  Hence for a general $\lambda \in k$, the intersection $Z(s_0) \cap Z(\widetilde{s} + \lambda e)$ is also transverse.  Clearly $\widetilde{s}$ and $\widetilde{s} + \lambda e$ agree on $U \times (\PP^1 - V)$ for any $\lambda$, hence the arguments of the previous paragraphs apply, and $s_1 = \widetilde{s} + \lambda e$ satisfies the conclusion of the proposition.
\end{remark}

\begin{notation} \label{transfer mor} Let $Z,X,X' \in \Sm / k$, and suppose $Z \into X \times X'$ is a closed subscheme such that $Z \to X$ is finite flat and admits an $N$-trivial embedding.  Then we define $t_Z : \cal{F}(X') \to \cal{F}(X)$ to be the composition $\cal{F}(X') \to \cal{F}(Z) \to \cal{F}(X)$, where the first map is induced by the morphism $Z \into X \times X' \to X'$ and the second is the weak transfer along $Z \to X$. \end{notation}

\begin{corollary} \label{A1 field injectivity} Let $k$ be a field.  Let $\cal{F}$ be a homotopy invariant presheaf on $\Sm /k$ with weak transfers for affine varieties.  Let $V \subset U \subset \bb{A}^1_k$ be nonempty open subschemes of $\bb{A}_k^1$.  Then the restriction map $\cal{F}(U) \to \cal{F}(V)$ is injective. \end{corollary}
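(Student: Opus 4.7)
The plan is to use the sections produced by Proposition~\ref{A1 field injectivity1} to set up a rational equivalence between $Z(s_0) = \Delta_U \sqcup R_0$ and $Z(s_1)$, then apply the factorization through rational equivalence lemma to reduce everything to transfers whose domains lie over $V$. So I fix $\alpha \in \cal{F}(U)$ with $\alpha|_V = 0$ and aim to conclude $\alpha = 0$.

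First I would take $s_0, s_1 \in H^0(U \times \PP^1, L)$ from Proposition~\ref{A1 field injectivity1} and form the section $\sigma := t s_0 + (1-t) s_1$ of $\mathrm{pr}^* L$ on $\AAA^1 \times U \times \PP^1$, letting $Y := Z(\sigma)$. Because $s_0 = s_1$ on $U \times (\PP^1 \setminus U)$ and this common value is nonzero (both $Z(s_0), Z(s_1)$ lie in $U \times V$), the scheme $Y$ lies in the affine open $\AAA^1 \times U \times U$. The projection $f : Y \to \AAA^1 \times U$ is then proper with zero-dimensional fibers, hence finite, and flat by miracle flatness once I verify $Y$ is smooth. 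The fibers $Y_0 = Z(s_1)$ and $Y_1 = Z(s_0) = \Delta_U \sqcup R_0$ are smooth by the proposition, and the transversality clause there forces $Y$ itself to be smooth: at a point where both $s_0, s_1$ vanish, they are local parameters in the fiber direction, and in these coordinates $\sigma = t x_1 + (1-t) x_2$ has Jacobian $(x_1 - x_2, t, 1 - t)$, whose last two entries cannot both vanish. Hence $f$ is finite, generically \'etale, with both base changes to $0, 1 \in \AAA^1$ smooth.

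Next I would check that $Y \hookrightarrow \AAA^1 \times U \times \AAA^1$ (using $U \subset \AAA^1$) is an $N$-trivial embedding: the conormal bundle is $L|_Y$, and since the explicit construction in Proposition~\ref{A1 field injectivity1} gives $L \cong \cal{O}(N+1)$, its restriction to any affine open of $U \times \PP^1$ avoiding $\infty$ -- in particular to $U \times U$ -- is trivial, so $L|_Y$ is trivial. Now applying the factorization through rational equivalence lemma to $f$ and $\beta := \pi^* \alpha$, where $\pi : Y \to U$ is the projection to the last factor, yields $f_{0*} g_0^* \beta = f_{1*} g_1^* \beta$ in $\cal{F}(U)$. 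The composition $Y_0 \hookrightarrow \AAA^1 \times U \times U \to U$ via $\pi$ factors through $V$ since $Y_0 \subset U \times V$, so $g_0^* \beta = 0$ and the left side vanishes. On the right, the contribution from $R_0 \subset U \times V$ vanishes for the same reason; on $\Delta_U \subset Y_1$, the map $\pi|_{\Delta_U}$ is an isomorphism onto $U$ whose inverse is a section of $f_1|_{\Delta_U}$, so property~(2) of Definition~\ref{PSOWT} identifies $(f_1|_{\Delta_U})_* g_1^* \pi^* \alpha$ with $\alpha$. Combining, $\alpha = 0$.

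The main technical obstacle is verifying the smoothness of $Y$ globally, especially along the transverse intersection $Z(s_0) \cap Z(s_1)$, and confirming that the conormal $L|_Y$ is trivial; once these two geometric facts are in hand, the argument is a formal unwinding of the factorization lemma together with the additivity and section-compatibility axioms for weak transfers. Note that only transfers along finite generically \'etale morphisms of smooth affine schemes enter, as required by the hypotheses on $\cal{F}$.
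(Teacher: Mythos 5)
Your argument is essentially the paper's own proof, with the same total space $Z(t\cdot s_0 + (1-t)\cdot s_1) \subset \A_t\times U\times \PP^1$, the same trivialization of the conormal bundle by restricting away from $\infty$, and the same use of additivity (Definition~\ref{PSOWT}(1)), section-compatibility (Definition~\ref{PSOWT}(2)), and the supports of $Z(s_1), R_0$ landing in $U\times V$. Two small remarks. First, you invoke the factorization through rational equivalence lemma, whose hypothesis as stated requires \emph{oriented} weak transfers, whereas the corollary only assumes weak transfers; this is harmless because the lemma's proof does not use orientation (it only uses homotopy invariance and compatibility with the transverse squares $i\times U\hookrightarrow\A_t\times U$), but you should note this if you cite the lemma verbatim, or (as the paper does) simply re-derive the relation $t_{\Delta_U}+t_{R_0}=t_{Z(s_0)}=t_{Z(s_1)}$ directly. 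Second, the conormal bundle of $Y=Z(\sigma)$ for $\sigma\in H^0(\mathrm{pr}^*L)$ is $\mathrm{pr}^*L^{-1}|_Y$, not $\mathrm{pr}^*L|_Y$; this is a sign typo and does not affect the conclusion, since the restriction of either to the affine chart $\A_t\times U\times U$ is trivial. Everything else (smoothness via transversality, miracle flatness, the bookkeeping showing $\alpha=0$) matches.
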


\begin{proof} The conditions guarantee the total space $\cal{Z} := Z(t \cdot s_0 + (1-t) \cdot s_1) \into \bb{A}^1_t \times U \times \bb{P}^1$ is $k$-smooth and $(\A_t \times U)$-finite.  The morphism $\cal{Z} \to \A_t \times U$ admits an $N$-trivial embedding.  Indeed the section $d(t \cdot s_0 + (1-t) \cdot s_1) \in H^0 (\cal{Z}, \Omega^1_{ \A_t \times U \times \PP^1} /k  |_\cal{Z} )$ trivializes the conormal bundle of the embedding $\cal{Z} \into \A_t \times U \times \PP^1$, which is canonically isomorphic to the conormal bundle of the embedding $\cal{Z} \into \A_t \times U \times \A_{X_0 \neq 0}$.  Hence the homotopy invariance gives the relation:
$$t_{\Delta_{U}} + t_{R_0} = t_{Z(s_0)} = t_{Z(s_1)} : \cal{F}(U) \to \cal{F}(U).$$
Since the correspondences $R_0$ and $Z(s_1)$ factor as $U \to V  \xrightarrow{j} U$, we get:
$$(t_{Z(s_1)} - t_{R_0} ) \circ j^* = t_{\Delta_{U}} = \id : \cal{F}(U) \to \cal{F}(U),$$
and so $j^*$ must be injective.   \end{proof}

\begin{proposition} \label{Zariski homotopies} Let $k$ be a field of characteristic $0$.  Let $ U \subset \bb{A}^1_k$ be an open subscheme, and $U = U_1 \cup U_2$ a Zariski cover.  Write $U_\infty := \bb{P}^1 - U$ and $Z_i := U - U_i$ for the pairwise disjoint reduced closed subschemes of $\bb{P}^1$.  On $U \times \bb{P}^1$ there exist:
\begin{enumerate}
\item a line bundle $M$; and
\item sections $\gamma \in H^0(U \times \bb{P}^1, M)$ and $s, s_1, s_2 \in H^0(U \times \bb{P}^1, \cal{O}(\Delta_U) \otimes M)$;
\end{enumerate}
such that the following equalities hold:
\begin{enumerate}
\item $s_1 = s_\Delta \cdot \gamma$ on $U \times (U_\infty \coprod Z_1)$;
\item $s_2 = s_\Delta \cdot \gamma$ on $U \times (U_\infty \coprod Z_2)$; and
\item $s_1 \cdot s_2 = s_\Delta  \cdot \gamma \cdot s$ on $U \times (U_\infty \coprod Z_1 \coprod Z_2)$.
\end{enumerate}

The sections also satisfy:
\begin{enumerate}
\item $\gamma$ generates along $\Delta_U$ and along $U \times (U_\infty \coprod Z_1 \coprod Z_2)$;
\item $s_1$ generates along $U \times Z_2$;
\item $s_2$ generates along $U \times Z_1$;
\item $Z(\gamma), Z(s_1), Z(s_2), Z(s)$ are $k$-smooth and $U$-finite; and
\item for $i=1,2$, the intersections $Z(s_i) \cap (\Delta_{U_{12}} \coprod Z(\gamma) )$ and $Z(s_i) \cap Z(s)$ are transverse in $U_{12} \times \bb{P}^1$.
\end{enumerate}

Finally there is an open subscheme $U_0 \subset U_{12}$ such that, letting ${(-)}_0$ denote the restriction, we have that ${Z(s_1)}_0 \cap {Z(s_2)}_0 = \emptyset$ and ${Z(s)}_0 \cap {(\Delta_{U} \coprod Z(\gamma))}_0 = \emptyset$.
\end{proposition}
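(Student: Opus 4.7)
The plan is to construct the four sections in the order $\gamma,s_1,s_2,s$. At each stage we prescribe the restriction to a closed subscheme of $U\times\PP^1$, lift to a global section via vanishing of the obstructing $H^1$, and then replace the chosen lift by a generic element of the resulting coset, applying Bertini's theorem in characteristic zero to force smoothness and transversality. Throughout take $M=\pi_2^*\cal{O}_{\PP^1}(N)$ for $N\gg 0$; since $\Sigma:=U_\infty\coprod Z_1\coprod Z_2$ is a finite reduced set of closed points of $\PP^1$ and $U$ is affine, the Leray spectral sequence reduces the vanishing of all ideal-twisted $H^1$ groups we will need to showing fiberwise positive degree of $\cal{O}(\Delta_U)\otimes M\otimes \cal{I}_{U\times \Sigma}$, and the relevant linear systems are base-point-free outside their forced loci.

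For $\gamma$: the conditions that $\gamma$ generate on $\Delta_U$ and on $U\times \Sigma$, that $Z(\gamma)$ be smooth (Bertini), and that $\gamma$ not vanish on any vertical fiber $\{u\}\times\PP^1$ (giving $U$-finiteness) each cut out a nonempty Zariski open in $H^0(U\times\PP^1,M)$; any section in the intersection works. For $s_1$ (and symmetrically $s_2$): since $U_\infty\coprod Z_1$ and $Z_2$ are disjoint, we prescribe $s_1|_{U\times (U_\infty\coprod Z_1)}=s_\Delta\gamma|_{U\times (U_\infty\coprod Z_1)}$ and $s_1|_{U\times Z_2}$ equal to a chosen nowhere-vanishing generator, lift globally, and perturb to a generic element of the coset. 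The forced base locus is $\Delta_{Z_1}\subset U\times\PP^1$, a finite set of points where the prescription $s_\Delta\gamma$ vanishes; at each $(z,z)\in\Delta_{Z_1}$ the local form of $s_\Delta\gamma$ in coordinates $(x,y)$ is $(y-x)\cdot(\mathrm{unit})$, whose zero scheme is already smooth at $(z,z)$, so smoothness of $Z(s_1)$ there is automatic, while Bertini supplies smoothness off the base locus.

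For $s$: the identity $s_\Delta\cdot\gamma\cdot s=s_1 s_2$ on $U\times \Sigma$ translates, by the pairwise disjointness of $U_\infty,Z_1,Z_2$, into the three prescriptions $s|_{U\times U_\infty}=s_\Delta\gamma|_{U\times U_\infty}$, $s|_{U\times Z_1}=s_2|_{U\times Z_1}$, $s|_{U\times Z_2}=s_1|_{U\times Z_2}$. Each is nowhere-vanishing (the first since $\Delta_U$ misses $U\times U_\infty$ and $\gamma$ generates there; the latter two by the generator properties of $s_1,s_2$), so the coset of global lifts has empty forced base locus, and Bertini directly produces a generic $s$ with $Z(s)$ smooth and $U$-finite and with the intersections $Z(s_i)\cap(\Delta_{U_{12}}\coprod Z(\gamma))$ and $Z(s_i)\cap Z(s)$ transverse over $U_{12}$. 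To identify $U_0$: by the above, $Z(s_1)\cap Z(s_2)$ is disjoint from $U\times \Sigma$ (since $s_1,s_2$ both generate on $U\times U_\infty$, $s_2$ generates on $U\times Z_1$, and $s_1$ on $U\times Z_2$), hence lies in $U\times U_{12}$; transversality over $U_{12}$ makes it $0$-dimensional, so its image in $U_{12}$ is finite. The same argument gives a finite image in $U$ for $Z(s)\cap(\Delta_U\coprod Z(\gamma))$. Define $U_0$ to be the complement in $U_{12}$ of the union of these two finite subsets.

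The main technical point is Bertini-smoothness of $Z(s_1)$ and $Z(s_2)$ along the forced base loci $\Delta_{Z_1}$, $\Delta_{Z_2}$, where Bertini does not apply directly. In each case the prescription on the base locus is a product of a transverse diagonal section $s_\Delta$ with a nowhere-vanishing factor, and the explicit local model above yields automatic smoothness.
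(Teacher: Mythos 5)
Your proof follows the same overall strategy as the paper's (prescribe restrictions on $U\times(U_\infty\coprod Z_1\coprod Z_2)$ and $\Delta_U$, lift using vanishing of the obstructing $H^1$, then genericize via Bertini in characteristic zero), and it arrives at the right constructions. The genuinely different step is your treatment of smoothness of $Z(s_i)$ along the forced base locus $\Delta_{Z_i}$: you observe that the restriction of $s_i$ to the horizontal lines $U\times Z_i$ is prescribed to be $s_\Delta\cdot\gamma$, whose restriction to $\{y=z\}$ has a simple zero at $(z,z)$, so $\partial s_i/\partial x(z,z)\neq 0$ for \emph{every} lift, making smoothness there automatic. The paper instead runs a pencil argument (as in its Remark on the conceptual Bertini proof) using that $Z(s_\Delta\gamma)=\Delta_U\coprod Z(\gamma)$ is a smooth member of the linear system. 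Your argument is more direct and needs no auxiliary smooth member; this is a clean simplification worth keeping.

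Two points of exposition should be tightened, though neither is a fundamental gap. First, for $\gamma$ the claim that ``generates on $\Delta_U$'' cuts out a nonempty Zariski open in the full (infinite-dimensional) $H^0(U\times\PP^1,M)$ is not right as stated: a generic section of $M$ will have zeros on the non-proper curve $\Delta_U$. You should phrase it the same way you do for $s_1,s_2,s$: prescribe $\gamma|_{\Delta_U\coprod U\times\Sigma}$ to be a generator (possible since $\Delta_U\cong U$ and $U\times\Sigma$ have trivial Picard group), lift using $H^1$-vanishing, and genericize within the coset; the generator prescription on $U\times\Sigma$ then automatically rules out vertical components, giving $U$-finiteness for free. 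Second, the transversality of $Z(s_i)\cap(\Delta_{U_{12}}\coprod Z(\gamma))$ is a property of $s_i$, not of $s$, and should be secured at the stage of choosing $s_i$ (note $Z_i\cap U_{12}=\emptyset$, so over $U_{12}$ the coset for $s_i$ has empty forced base locus and ordinary Bertini applies); similarly, ``transversality over $U_{12}$ makes $Z(s_1)\cap Z(s_2)$ $0$-dimensional'' should read ``absence of a common component'' -- no transversality of $Z(s_1)\cap Z(s_2)$ is asserted or needed, only properness of the intersection, which holds for independent generic choices and gives the finite set whose complement is $U_0$.
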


\begin{remark} The conditions imply that $s_1$ generates along $U_1 \times (U_\infty \coprod Z_1)$, that $s_2$ generates along $U_2 \times (U_\infty \coprod Z_2)$, and that $s$ generates along $U \times (U_\infty \coprod Z_1 \coprod Z_2)$; and moreover that $s_2 = s$ on $U \times Z_1$ and $s_1 = s$ on $U \times Z_2$.
\end{remark}

\begin{remark} For the application (Theorem $\ref{open affine line MV}$), it would suffice to show the following weaker version of (5): for $i=1,2$, the intersections $Z(s_i) \cap Z(s)$ are transverse in $U_{12} \times \bb{P}^1$, possibly after ignoring irreducible components common to $Z(s_i)$ and $Z(s)$. \end{remark}

\begin{proof} Let $F$ be a homogeneous form defining $U_\infty$, i.e., a global section of $\cal{O}_{\bb{P}^1}(U_\infty)$; and let $G_1,G_2$ be forms defining $Z_1, Z_2$.  Let $u$ be the coordinate on $U$ and $X_0,X_1$ the coordinates on $\bb{P}^1$.  Then we set $\gamma := s_\Delta F(X) G_1(X) G_2(X) + {X_0}^n$, and $M = p_2^* (\cal{O}(n))$.  The section $\gamma$ generates along $\Delta_U$ and along $U \times (U_\infty \coprod Z_1 \coprod Z_2)$ as required.

Both $U \times Z_1$ and $U \times Z_2$ have trivial Picard group.  We choose also sufficiently general generating sections $g_1$ of $M(\Delta_U) := M \otimes \cal{O}(\Delta_U)$ along $U \times Z_2$ and $g_2$ along $U \times Z_1$.  Now we explain how to find $s_1$ as desired; $s_2$ is gotten by switching the indices.

The bundle $\cal{I}_{U \times (U_\infty  \coprod Z_1 \coprod Z_2)} \otimes M(\Delta_U)$ induces $\cal{O}(2)$ on every fiber of the first projection, and $U$ is affine, so the sheaf sequence:
$$0 \to \cal{I}_{U \times (U_\infty  \coprod Z_1 \coprod Z_2)} \otimes M(\Delta_U) \to M(\Delta_U) \to M(\Delta_U) |_{U \times (U_\infty  \coprod Z_1 \coprod Z_2)} \to 0$$
remains exact after applying $H^0(U \times \bb{P}^1, -)$.  So there are many global sections inducing $s_\Delta \cdot \gamma$ along $U \times (U_\infty \coprod Z_1)$ and $g_1$ along $U \times Z_2$.  The general such lift is smooth since the zero scheme of $s_\Delta \cdot \gamma$ is smooth and the condition that the section generates along $U \times Z_2$ is more general than the behavior of $s_\Delta \cdot \gamma$ along $U \times Z_2$.  More precisely one repeats the argument in Remark $\ref{first bertini}$ (possibly changing the generating section $g_1$).  This shows $Z(s_1)$ is $k$-smooth and intersects $\Delta \coprod Z(\gamma)$ transversely (in particular $Z(s_1)$ contains neither $\Delta$ nor $Z(\gamma)$ as an irreducible component).  

To produce the section $s$, choose a general global section $s^a$ of $M(\Delta_U)$ such that $s^a$ agrees with $s_\Delta \cdot \gamma$ along $U \times U_\infty$, with $s_2$ on $U \times Z_1$, and with $s_1$ on $U \times Z_2$.  We can find such a $s^a$ with smooth zero scheme since $Z(s_1)$ and $Z(s_2)$ are smooth.  Then choose a general global section $e$ of $\cal{I}_{U \times (U_\infty  \coprod Z_1 \coprod Z_2)} \otimes M(\Delta_U)$ with ``horizontal" zero scheme.

The sections $s_1$ and $s_2$ cannot have horizontal zero schemes, so the pencil determined by $s^a$ and $e$ contains no component of $Z(s_1) \cup Z(s_2)$ as a fixed component.  For $e$ sufficiently generic the pencil $\bb{P} (\lambda s^a + \mu e) \into U_1 \times \bb{P}^1 \times \bb{P}^1_{\lambda, \mu}$ will not have any base-points along $Z(s_1) \cup Z(s_2)$.   Furthermore all elements of the pencil (written as $s^a + {\mu \over \lambda} e$) have the correct behavior along $U \times (U_\infty \coprod Z_1 \coprod Z_2)$, so taking $s = s^a + {\mu_0 \over \lambda_0} e$, we can make the intersections transverse without disturbing the other properties.

To find the open subscheme $U_0 \subset U_{12}$, we simply discard the images of intersection points.  More precisely we set $C:= pr_1 (Z(s_1) \cap Z(s_2)) \cup pr_1 (Z(s) \cap (\Delta_{U_{12}} \coprod Z(\gamma) ) ) \into U_{12}$ and $U_0 := U_{12} - C$.  The subscheme $U_0$ is nonempty since $Z(s_1), Z(s_2)$ and $Z(s), \Delta_{U_{12}} \coprod Z(\gamma)$ have no components in common.   \end{proof}

\begin{theorem} \label{open affine line MV} \label{MVW 22.4} Let $k$ be a field of characteristic $0$.  Let $ U \subset \bb{A}^1_k$ be an open subscheme, and $U = U_1 \cup U_2$ a Zariski cover.  Write $U_{12} := U_1 \cap U_2$.

Let $\cal{F}$ be a homotopy invariant presheaf of abelian groups on $\Sm /k$ with oriented weak transfers for affine varieties.  Then the Mayer-Vietoris sequence:
$$0 \to \cal{F}(U) \xrightarrow{j_1^*, j_2^*} \cal{F}(U_1) \oplus \cal{F}(U_2) \xrightarrow{j_{12/1}^* - j_{12/2}^*}\cal{F}(U_{12}) \to 0 $$
is exact. \end{theorem}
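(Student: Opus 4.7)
The argument breaks into three parts corresponding to exactness at each of the three positions. Left injectivity of $(j_1^*, j_2^*)$ is immediate from Corollary \ref{A1 field injectivity}: since $U$ is irreducible and $U_1,U_2$ are nonempty, $U_{12}$ is a nonempty open subscheme of $\bb{A}^1_k$, so $\cal{F}(U)\to\cal{F}(U_{12})$ factors as the composition of injections $\cal{F}(U)\to\cal{F}(U_1)\to\cal{F}(U_{12})$. Vanishing of the composite $(j_{12/1}^*-j_{12/2}^*)\circ (j_1^*,j_2^*)$ is immediate from functoriality of $\cal{F}$.

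For middle exactness and right surjectivity, the plan is to invoke Proposition \ref{Zariski homotopies} to extract a line bundle $M$, sections $\gamma,s,s_1,s_2$ on $U\times\bb{P}^1$, and the open subscheme $U_0\subset U_{12}$ on which the various zero schemes separate. Because $\cal{F}(U_{12})\to\cal{F}(U_0)$ is injective by another application of Corollary \ref{A1 field injectivity}, it suffices to verify the desired identities after restriction to $U_0$. The generation conditions of Prop \ref{Zariski homotopies} imply the inclusions $Z(s_1)\subset U\times U_2$, $Z(s_2)\subset U\times U_1$, and $Z(s), Z(\gamma)\subset U\times U_{12}$. Base-changing along the smooth open immersion $U_0\hookrightarrow U$ (using Def \ref{PSOWT}(4)) then produces correspondences over $U_0$; their $N$-trivial embeddings are furnished by the Kähler differential of the defining section, exactly as in the proof of Corollary \ref{A1 field injectivity}. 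These yield transfer maps of the form $t_{Z(s_1)|_{U_0}}\colon\cal{F}(U_2)\to\cal{F}(U_0)$, and similarly for the other correspondences.

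The main geometric step is to form three $\A$-pencils on $U_0\times\bb{P}^1$, namely $\{\lambda s_1+\mu s_\Delta\gamma\}$, $\{\lambda s_2+\mu s_\Delta\gamma\}$, and $\{\lambda s_1s_2+\mu s_\Delta\gamma s\}$; the endpoint-agreement on $U_0\times(U_\infty\cup Z_1\cup Z_2)$ is guaranteed by identities (1)--(3) of Prop \ref{Zariski homotopies}. Using the transversality in (5) together with Bertini in characteristic zero (exactly as in Remark \ref{first bertini}), each pencil has smooth $(\A\times U_0)$-finite total space, and the embedding into $\A\times U_0\times\A_{X_0\neq 0}$ is $N$-trivial via the defining section's Kähler differential. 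Applying homotopy invariance as in the proof of Corollary \ref{A1 field injectivity} produces three identities of transfer maps on $U_0$; splitting the divisor $Z(s_\Delta\gamma)=\Delta_U\coprod Z(\gamma)$ by additivity (Def \ref{PSOWT}(1)), using the normalization $t_{\Delta_{U_0}}=\id$ (Def \ref{PSOWT}(2)), and invoking base-change compatibility to identify restrictions of global transfers with transfers of restricted correspondences, one extracts from these three relations the explicit splittings needed to conclude middle exactness and surjectivity of $j_{12/1}^*-j_{12/2}^*$.

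The main obstacle is the careful bookkeeping of the source and target of each transfer map, as the four correspondences live in four different products $U_0\times X'$ (for $X'\in\{U,U_1,U_2,U_{12}\}$) and the three pencil identities must be combined consistently over the correct opens. The orientation hypothesis is essential here via Lemma \ref{ind of embedding}: without it, each pencil would induce its own normal bundle trivialization on the zero schemes, and the transfer maps appearing in different identities could not be identified with one another so as to combine cleanly.
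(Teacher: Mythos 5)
Your high-level plan mirrors the paper's: injectivity from Corollary \ref{A1 field injectivity}, the geometric input from Proposition \ref{Zariski homotopies}, the three pencils $\{\lambda s_1+\mu s_\Delta\gamma\}$, $\{\lambda s_2+\mu s_\Delta\gamma\}$, $\{\lambda s_1s_2+\mu s_\Delta\gamma s\}$, restriction to $U_0$ plus injectivity, splitting $Z(s_\Delta\gamma)=\Delta\coprod Z(\gamma)$ by additivity and normalization, and base-change compatibility for the transfers. The choice of $U_0$ as the locus where transversality/disjointness is actually controlled, with injectivity of $\cal{F}(U_i)\to\cal{F}(U_0)$ used to promote identities, is exactly the paper's device.

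However, the final sentence of your middle paragraph — ``one extracts from these three relations the explicit splittings needed to conclude middle exactness and surjectivity'' — is where the actual content of the theorem lives, and you have not produced it. The paper does not just ``extract splittings''; it defines an explicit contracting homotopy $h^1=(0,B_2)$ and $h^2=(A_1,C-A_2)$ with $B_2 = j_{12/2}\circ(s-\gamma)$, $A_i=\gamma\circ j_i - s_i$, $C=(\gamma-s)\circ j_2$, and then verifies \emph{six} distinct identities (the paper's displayed lines (4.1)--(4.6)). Several of these identities (e.g.\ the relations $j_{12/1}\circ s_2 = j_{12/1}\circ s\circ j_2$ and $j_{12/2}\circ s_1=j_{12/2}\circ s\circ j_1$) are not immediate consequences of the three pencil relations as you have stated them — they require separately tracking which factorization of each zero scheme is used, and in which group $\cal{F}(-)\to\cal{F}(-)$ each identity holds. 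Indeed you record only the factorizations $Z(s_1)\subset U\times U_2$, $Z(s_2)\subset U\times U_1$, whereas the homotopy map $h^2\colon\cal{F}(U_{12})\to\cal{F}(U_1)\oplus\cal{F}(U_2)$ forces you to also use the factorizations $Z(s_1)|_{U_1}\subset U_1\times U_{12}$ and $Z(s_2)|_{U_2}\subset U_2\times U_{12}$, i.e.\ the correspondences $s_i\colon U_i\to U_{12}$ of the paper. Without writing down the actual homotopy and checking the identities, the proof is not complete — you have assembled the ingredients, acknowledged the ``bookkeeping'' obstacle, and then stopped precisely at the obstacle.

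A lesser issue: you attribute the necessity of orientation to Lemma \ref{ind of embedding}. In fact Lemma \ref{ind of embedding} is not invoked here; the paper's Remark \ref{orientation necessary} explains the actual mechanism, namely that when $Z(s_\Delta\gamma)=\Delta\coprod Z(\gamma)$ is split by additivity after discarding the incidence points, the componentwise trivializations of the normal bundles do not glue to the trivialization coming from $d(s_\Delta\gamma)$, so one needs the transfer to be independent of trivialization. Your phrasing (``each pencil would induce its own normal bundle trivialization'') is in the right spirit but does not pinpoint where the two trivializations actually disagree.
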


\begin{proof} We have the open immersions (for $i=1,2$) : $j_i : U_i \to U$ and $j_{12/i} : U_{12} \to U_i$.  We have also $j_{12} : U_{12} \to U$ and $j_0 : U_0 \to U_{12}$.

By Proposition $\ref{Zariski homotopies}$ we obtain smooth correspondences $\gamma: U \to U_{12}, s_1 : U_1 \to U_{12}, s_2 : U_2 \to U_{12},$ and $s : U \to U_{12}$.  These induce morphisms between various groups $\cal{F}(-)$, e.g., we obtain a morphism $t_\gamma : \cal{F}(U_{12}) \to \cal{F}(\gamma) \to \cal{F}(U)$ as described in Notation $\ref{transfer mor}$.

After applying $\cal{F}$, these morphisms are subject to the following relations, where we have omitted $\cal{F}(-)$ to simplify the notation.  (The first equation, e.g., means $t_s \circ j_{12}^* = t_\gamma \circ j_{12}^* + \id : \cal{F}(U) \to \cal{F}(U)$.)

\begin{eqnarray} \label{line1} j_{12} \circ s =  j_{12} \circ \gamma + \id  :U \to U  \\ 
\label{line2} j_{12/1} \circ s_1 =  j_{12/1} \circ \gamma \circ j_1 +  \id_1 :U_1 \to U_1 \\
\label{line3} j_{12/2} \circ s_2  =  j_{12/2} \circ  \gamma \circ j_2 + \id_2 : U_2 \to U_2\\
\label{line4} (s_1 \circ  j_{12/1} + s_2 \circ j_{12/2}) \circ j_0 = (s \circ j_{12} + \gamma \circ j_{12} + \id_{12} ) \circ  j_0 : U_0 \to U_{12} \\
\label{line5} j_{12/1} \circ s_2  = j_{12/1} \circ s \circ j_2 : U_2 \to U_1 \\
\label{line6} j_{12/2} \circ s_1 = j_{12/2} \circ s \circ j_1 : U_1 \to U_2 \\ \end{eqnarray}

Now we define the contracting homotopy, where we have written $j_-$ for $j_-^*$ in the top row to be consistent with the rest of the notation:
$$\xymatrix{
0 \ar[r]  &\cal{F}(U) \ar[rr]^-{j_1, j_2} \ar[d] && \cal{F}(U_1) \oplus \cal{F}(U_2) \ar[rr]^-{j_{12/1} - j_{12/2}} \ar[d]  \ar[dll]^-{0 + B_2} && \cal{F}(U_{12}) \ar[r]  \ar[d]  \ar[dll]^-{A_1, C-A_2} & 0    \\
0 \ar[r]  &\cal{F}(U) \ar[rr] && \cal{F}(U_1) \oplus \cal{F}(U_2) \ar[rr] && \cal{F}(U_{12}) \ar[r] & 0 }$$
where $B_2 = j_{12/2} \circ (s - \gamma), A_i = \gamma \circ j_i - s_i,$ and $C = (\gamma - s ) \circ j_2$.

\medskip
We verify the homotopy relations (using the convention $dh-hd$).
\begin{itemize}
\item $j_2 \circ B_2 + j_1 \circ 0 \stackrel{\text{defn}}{=} j_2 \circ (j_{12/2} \circ (s - \gamma)) = j_{12} \circ (s - \gamma) \stackrel{\ref{line1}}{=} \id$

\medskip

\item $- ( j_{12/1} \circ A_1) \stackrel{\text{defn}}{=} - (j_{12/1} \circ (\gamma \circ j_1 - s_1) ) \stackrel{\ref{line2}}{=} \id_1$

\medskip

\item $B_2 \circ j_1 + j_{12/2} \circ A_1  \stackrel{\text{defn}}{=} j_{12/2} \circ (s - \gamma) \circ j_1 + j_{12/2} \circ (\gamma \circ j_1 - s_1)  \stackrel{\ref{line6}}{=} j_{12/2} \circ s_1 - j_{12/2} \circ \gamma \circ j_1 + j_{12/2} \circ \gamma \circ j_1 -   j_{12/2} \circ s_1 =0$

\medskip

\item $B_2 \circ j_2   +  j_{12/2} \circ (C- A_2) = j_{12/2} \circ (s - \gamma) \circ j_2 + j_{12/2} \circ (\gamma -s ) \circ j_2  + j_{12/2} \circ (s_2 - \gamma \circ j_2) = j_{12/2} \circ (s_2 - \gamma \circ j_2) \stackrel{\ref{line3}}{=} \id_2 $

\medskip

\item $0 \circ j_2 +  j_{12/1}\circ (C- A_2) =  j_{12/1} \circ (\gamma -s ) \circ j_2  + j_{12/1} \circ (s_2 - \gamma \circ j_2) = - j_{12/1} \circ s \circ j_2 + j_{12/1} \circ s_2 \stackrel{\ref{line5}}{=} 0$

\medskip

\item $A_1 \circ j_{12/1} - (C- A_2) \circ j_{12/2}  = \gamma \circ j_{12} - s_1 \circ j_{12/1} + (s- \gamma) \circ j_{12} + \gamma \circ j_{12} - s_2 \circ j_{12/2} = \gamma \circ j_{12} + s \circ j_{12} - s_1 \circ j_{12/1} - s_2 \circ j_{12/2} = \id_{12}$
\end{itemize}

The final equality holds upon precomposition with $j_0$, and on the presheaf this corresponds to composing with the injection $\cal{F}(U_{12}) \to \cal{F}(U_0)$.  Hence the equation holds by Corollary $\ref{A1 field injectivity}$.
\end{proof}

\begin{remark} \label{orientation necessary} Given a reduced but reducible divisor $D$ in $\bb{A}^n$ with smooth irreducible components $D_1, \ldots, D_r$, our strategy is to define $t_D$ as the sum $\sum_i t_{D_i}$, then verify relations among various $t_D$'s after removing the points of incidence.  This requires the weak transfers to be oriented, or at least ``oriented over all open subschemes of $\bb{A}^1$."

More precisely, the definition of each $t_{D_i}$ requires a trivialization of the normal bundle $N_{D_i}(\bb{A}^n)$.  If there are nontrivial incidences $D_{ij} := D_i \cap D_j$, these trivializations will not glue to a trivialization of $N_{D - \cup_{i,j}D_{ij}}(\bb{A}^n- \cup_{i,j}D_{ij})$.

Here is a simple example which captures our situation \'etale locally.  Consider $V(xy) = V(x) \cup V(y) \into \bb{A}^2$.  Then $dy$ trivializes the conormal bundle $N^\vee_{V(y)}(\bb{A}^2)$ and $dx$ trivializes the conormal bundle $N^\vee_{V(x)}(\bb{A}^2)$.  These do not glue to a trivialization of the conormal bundle $N^\vee_{V(xy) - (0,0)}(\bb{A}^2 - (0,0) )$.  Indeed $d(xy) = x dy + y dx$ trivializes $N^\vee_{V(xy) - (0,0)}(\bb{A}^2 - (0,0) )$, but restricts to $x dy$ on $V(y) - 0$ and to $y dx$ on $V(x)- 0$.

In this example, we require the weak transfers to be compatible with the open immersion $V(x) - 0 \cong \A - 0 \subset V(x) \cong \A$, and also that the trivialization $d x |_{V(x) - 0}$ induces the same weak transfer as the trivialization $y dx$.  Thus the proof of Theorem \ref{open affine line MV} requires the weak transfers to be oriented.  In general, discarding some incidence signals the orientation is necessary.
\end{remark}

\begin{lemma} \label{V 4.15 proof} Let $\cal{F}$ be a presheaf of abelian groups on $\Sm /k$, and suppose for any subscheme $U \subset \A_k$ and any Zariski covering $U = U_1 \cup U_2$ we have that the sequence:
$$ 0 \to \cal{F}(U) \to \cal{F}(U_1) \oplus \cal{F}(U_2) \to \cal{F}(U_1 \cap U_2) \to 0$$
is exact.  Then for all such $U$ we have $H^0_{Zar}(U, \cal{F}_{Zar} ) = \cal{F}(U)$ and $H^i_{Zar}(U, \cal{F}_{Zar}) = 0 $ for $i \neq 0$. \end{lemma}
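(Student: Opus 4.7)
The plan is to deduce both claims from the hypothesized MV sequence by standard sheaf-theoretic manoeuvres. As a first step, I would observe that $\cal{F}$, restricted to the site of open subschemes of $\A_k$, is already a Zariski sheaf. Indeed, the left-exactness portion $0 \to \cal{F}(U) \to \cal{F}(U_1) \oplus \cal{F}(U_2) \to \cal{F}(U_1 \cap U_2)$ is exactly the sheaf condition for covers by two opens; since every open subscheme of $\A_k$ is Noetherian (hence quasi-compact), a straightforward induction on the cardinality of a finite cover extends the sheaf axiom to arbitrary finite covers, and finite subcovers exist for any cover. Consequently $\cal{F}_{Zar}(U) = \cal{F}(U)$, which gives the first claim $H^0_{Zar}(U, \cal{F}_{Zar}) = \cal{F}(U)$.

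Next, I would dispose of cohomology in degrees $\geq 2$. Any open $U \subset \A_k$ has Krull dimension at most one, so Grothendieck's vanishing theorem on cohomological dimension gives $H^i_{Zar}(U, \cal{G}) = 0$ for every $i \geq 2$ and every abelian Zariski sheaf $\cal{G}$; in particular this applies to $\cal{G} = \cal{F}_{Zar}$.

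It remains to prove $H^1_{Zar}(U, \cal{F}_{Zar}) = 0$. Since $H^1$ always coincides with $\check{H}^1$, and the latter is the filtered colimit of $\check{H}^1(\cal{U}, \cal{F}_{Zar})$ over finite open covers $\cal{U}$, it suffices to show $\check{H}^1(\cal{U}, \cal{F}) = 0$ for every finite cover $\cal{U} = \{U_1, \dots, U_n\}$ of every open $U \subset \A_k$. I would proceed by induction on $n$. The base $n=2$ is exactly the surjectivity $\cal{F}(U_1) \oplus \cal{F}(U_2) \to \cal{F}(U_1 \cap U_2)$ in the hypothesis, which says each Čech $1$-cocycle is a coboundary. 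For the inductive step, set $V := U_2 \cup \dots \cup U_n$; given a $1$-cocycle $(f_{ij})$ for $\cal{U}$, its restriction to $\{U_2, \dots, U_n\}$ is a $1$-cocycle on $V$, which by induction is a coboundary $f_{ij} = g_j|_{U_{ij}} - g_i|_{U_{ij}}$ for some $g_j \in \cal{F}(U_j)$, $j \geq 2$. Subtracting this coboundary reduces us to the case $f_{ij} = 0$ for $i,j \geq 2$; the remaining $f_{1j}$ then agree on triple overlaps and, because $\cal{F}$ is a sheaf on $V$, glue to a single section $f \in \cal{F}(U_1 \cap V)$. Applying the $n=2$ surjectivity to the cover $U = U_1 \cup V$ exhibits $f$ as a coboundary, completing the induction.

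The main obstacle is purely bookkeeping in the inductive step for $\check{H}^1$, namely organizing the cochain-level calculation so that MV surjectivity for the 2-cover $\{U_1, V\}$ combines cleanly with the inductive hypothesis on $V$. All the nontrivial geometric content has already been packaged into the MV sequence (via the constructions of the previous subsection), so the remaining work here is entirely combinatorial.
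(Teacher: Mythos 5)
Your proof is correct, and it is essentially the standard argument: the paper itself does not reprove this but simply points to Voevodsky's Theorem 4.15 in \cite{CTPST}, where the reasoning (left-exactness of Mayer--Vietoris gives the sheaf condition and hence the $H^0$ statement; Grothendieck vanishing kills $H^{\geq 2}$ on the one-dimensional $U$; $H^1 = \check H^1$ vanishes by an induction on the size of a finite cover driven by the Mayer--Vietoris surjectivity) is the same as what you wrote out. Your presentation just makes explicit the bookkeeping that both the present paper and \cite{CTPST} leave implicit.
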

\begin{proof} This isolates the implication shown in \cite[Thm.~4.15]{CTPST}. \end{proof}

\begin{corollary} \label{V 4.15}  \label{MVW 22.5} Let $k$ be a field of characteristic $0$, and let $\cal{F}$ be a homotopy invariant presheaf of abelian groups on $\Sm /k$ with oriented weak transfers for affine varieties.  Then for any open subscheme $U \subset \A_k$, we have $H^0_{Zar}(U, \cal{F}_{Zar} ) = \cal{F}(U)$ and $H^i_{Zar}(U, \cal{F}_{Zar}) = 0 $ for $i \neq 0$. \end{corollary}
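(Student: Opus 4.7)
The plan is to combine two previously established results: the Mayer--Vietoris exactness of Theorem \ref{open affine line MV} and the abstract cohomological criterion of Lemma \ref{V 4.15 proof}. Essentially no additional geometric work is required -- the real effort has already been spent in constructing the smooth correspondences $\gamma, s, s_1, s_2$ in Proposition \ref{Zariski homotopies}.

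First, fix any open subscheme $U \subset \A_k$ and any Zariski cover $U = U_1 \cup U_2$ by open subschemes. Since $U_1$, $U_2$, and $U_{12} := U_1 \cap U_2$ are again open subschemes of $\A_k$, Theorem \ref{open affine line MV} applies directly and yields the short exact Mayer--Vietoris sequence
$$0 \to \cal{F}(U) \xrightarrow{(j_1^*, j_2^*)} \cal{F}(U_1) \oplus \cal{F}(U_2) \xrightarrow{j_{12/1}^* - j_{12/2}^*} \cal{F}(U_{12}) \to 0.$$
This verifies the hypothesis of Lemma \ref{V 4.15 proof} for every open $U \subset \A_k$ and every two-term Zariski cover.

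Invoking Lemma \ref{V 4.15 proof} then gives $H^0_{Zar}(U, \cal{F}_{Zar}) = \cal{F}(U)$ and $H^i_{Zar}(U, \cal{F}_{Zar}) = 0$ for $i \neq 0$, which is precisely the statement. The characteristic-zero hypothesis enters only through the application of Theorem \ref{open affine line MV}, where it was needed to invoke Bertini in Proposition \ref{Zariski homotopies}; and the orientation hypothesis enters only through Theorem \ref{open affine line MV} as discussed in Remark \ref{orientation necessary}. There is no genuine obstacle at this stage: the corollary is a formal consequence of Mayer--Vietoris plus the standard argument that Zariski Mayer--Vietoris for an affine base of a topology detects sheaf cohomology.
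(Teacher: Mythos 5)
Your proof is correct and matches the paper's own argument exactly: the paper's proof is simply ``Combine Theorem \ref{open affine line MV} and Lemma \ref{V 4.15 proof}.'' Your additional remarks about where the characteristic-zero and orientation hypotheses are used are accurate and consistent with the paper's discussion.
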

\begin{proof} Combine Theorem $\ref{open affine line MV}$ and Lemma $\ref{V 4.15 proof}$.  \end{proof}

\begin{corollary} \label{V 4.16} Let $k$ be a field of characteristic $0$, and let $\cal{F}$ be a homotopy invariant presheaf of abelian groups on $\Sm /k$ with oriented weak transfers for affine varieties.  Then the morphism $\A_k \to \spec k$ induces an isomorphism $\cal{F}_{Zar}(\spec k) \cong \cal{F}_{Zar}(\A_k)$. \end{corollary}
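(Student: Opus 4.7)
The plan is to chain together three identifications. First, since $\spec k$ is a single point, every presheaf on it is already a sheaf, so $\cal{F}_{Zar}(\spec k) = \cal{F}(\spec k)$ tautologically. Second, by Corollary \ref{V 4.15} applied to the open subscheme $U = \A_k \subset \A_k$, we obtain $\cal{F}_{Zar}(\A_k) = H^0_{Zar}(\A_k, \cal{F}_{Zar}) = \cal{F}(\A_k)$. Third, by the hypothesis that $\cal{F}$ is homotopy invariant, the projection $\A_k \to \spec k$ induces an isomorphism $\cal{F}(\spec k) \xrightarrow{\cong} \cal{F}(\A_k)$.

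Composing these identifications and checking that they are compatible with the pullback map induced by $\A_k \to \spec k$ (which is clear, since both the sheafification and the identification via Corollary \ref{V 4.15} are compatible with restriction maps) yields the desired isomorphism. The main content is really Corollary \ref{V 4.15}, which is doing all the work; the remaining step is just a direct appeal to homotopy invariance of the presheaf itself.
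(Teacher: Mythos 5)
Your proof is correct and matches the intended argument: the paper leaves \autoref{V 4.16} without an explicit proof precisely because it follows directly from \autoref{V 4.15} (applied with $U = \A_k$), together with the tautological identification $\cal{F}_{Zar}(\spec k) = \cal{F}(\spec k)$ (the small Zariski site of a point is trivial) and the homotopy invariance hypothesis on $\cal{F}$. Your observation that the chain of identifications is natural with respect to the pullback along $\A_k \to \spec k$ is exactly the check needed to conclude.
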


\subsection{The affine line over a smooth local scheme}

We will also the need the injectivity of the restriction map for open subschemes of the affine line over a local base.

\begin{proposition} \label{A1 local injectivity} Let $k$ be a field of characteristic $0$, and let $S$ be a smooth local $k$-scheme.  Let $V \subset U \subset \bb{A}^1_S$ be nonempty open subschemes with $0_S \into V$, and suppose $U - 0_S$ is affine.  On $U \times_S \bb{P}^1_S$ there are a line bundle $L$ and global sections $s_0, s_1 \in H^0(U \times_S \bb{P}_S^1, L)$ with the following properties:
\begin{enumerate}
\item the zero scheme $Z(s_0 |_{(U -0_S) \times_S \bb{P}^1_S})$ is the disjoint union $\Delta_{U-0_S} \coprod R_0$;
\item $R_0$ is $k$-smooth, $(U-0_S)$-finite, and contained in $(U-0_S) \times_S (V-0_S)$;
\item the zero scheme $Z(s_1)$ is $k$-smooth, $(U-0_S)$-finite, and contained in $(U-0_S) \times_S (V-0_S)$;
\item $s_0 |_{(U-0_S) \times_S (0_s \coprod ( \bb{P}_S^1 - U))} = s_1 |_{(U-0_S) \times_S ( 0_S \coprod (\bb{P}_S^1 - U))}$; and
\item the intersection $Z(s_0) \cap Z(s_1)$ is transverse.
\end{enumerate}
\end{proposition}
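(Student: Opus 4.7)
\noindent\textit{Proof plan.} The idea is to transport the construction of Proposition \ref{A1 field injectivity1} to the present setting, with the single modification that the polynomial $f \in \cal{O}_S[x]$ defining $\bb{A}^1_S - U$ is replaced by $f \cdot x$, which defines $\bb{A}^1_S - (U - 0_S) = (\bb{A}^1_S - U) \cup 0_S$. Said differently, the zero section $0_S \subset \PP^1_S$ is treated as an additional piece of the ``boundary,'' on the same footing as $\bb{A}^1_S - U$; the Bezout coprimality of $fx$ and $g$, which holds precisely because of the hypothesis $0_S \in V$, is what permits this substitution.

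First, since $S$ is local, $\pic(\bb{A}^1_S) = 0$, and I choose monic polynomials $f, g \in \cal{O}_S[x]$ defining $\bb{A}^1_S - U$ and $U - V$. Write $F, G$ for their homogenizations and $F' := F X_1$ for the homogenization of $fx$. The hypothesis $0_S \in V$ implies that $V(f)$, $V(x)$, $V(g)$ are pairwise disjoint in $\bb{A}^1_S$, hence $fx$ and $g$ are coprime; fix a Bezout identity $\alpha(fx) + \beta g = 1$ with $\deg \alpha < \deg g$ and $\deg \beta \le \deg f$, homogenizing to $A F' + B' G = X_0^{\deg F + \deg G}$. Set $N := \deg F + \deg G + 2$ and $L := \cal{O}(\Delta_U) \otimes p_2^* \cal{O}(N)$.

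I then take
\[ s_0 := s_\Delta \bigl( s_\Delta F' G + X_0^N \bigr), \qquad s_1 := B' G s_\Delta X_0^{2} + F' Q, \]
where $Q$ is a polynomial of degree $\deg G + 2$ such that $fq = x^{N+1} + \cdots$ is monic of degree $N+1$ in $x$; both sections are polynomials in $u, X_0, X_1$ with $\cal{O}_S$-coefficients and hence extend canonically from $(U - 0_S) \times_S \PP^1_S$ to $U \times_S \PP^1_S$. The verification of properties (1)--(4) is then word-for-word that of Proposition \ref{A1 field injectivity1}, using the chart $X_0 = 1$ where $s_0$ becomes $(x - u)(x(x - u) f(x) g(x) + 1)$: the extra $x$-factor inside $F'$ systematically prevents $R_0$ and $Z(s_1)$ from meeting the $0_S$-slice $V(X_1)$, so both zero schemes lie in $(U - 0_S) \times_S (V - 0_S)$; smoothness and finiteness follow from the Jacobian $\partial_u s_{R_0} = -xf(x)g(x)$, nonvanishing on $R_0$, together with monicity in $x$; and the boundary matching $s_0 = s_1$ at $X_1 = 0$ is forced by the Bezout specialization $B'(X_0, 0) G(X_0, 0) = X_0^{\deg F + \deg G}$, which gives $s_0|_{X_1 = 0} = -uX_0^{N+1} = s_1|_{X_1 = 0}$, while the matchings on $V(F)$ and $V(X_0)$ are as in the field case.

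Property (5), together with the $k$-smoothness of $Z(s_1|_{(U - 0_S)})$, is handled by Bertini on the quasi-projective $k$-smooth scheme $(U - 0_S) \times_S \PP^1_S$ (the hypothesis that $U - 0_S$ is affine is what yields the $H^1$-vanishing needed to lift generic data, and $\charct k = 0$ is needed for Bertini itself), followed by a generic horizontal perturbation as in Remark \ref{first bertini}. The main obstacle is conceptual rather than technical: one must recognize that $0_S$ should be absorbed into the boundary, at which point the entire construction of Proposition \ref{A1 field injectivity1} transports verbatim, with the Bezout coprimality of $fx$ and $g$ playing the role previously played by the coprimality of $f$ and $g$.
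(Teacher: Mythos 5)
Your proposal is correct and reaches the same endpoint, but it absorbs $0_S$ into the boundary in a genuinely different way from the paper.

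\textbf{Where you differ.} Both arguments must account for the new boundary piece $0_S$ in the local setting. You do this by replacing $f$ with $fx$, so the homogenized boundary form becomes $F' = F X_1$, and you keep $X_0^N$ as the ``make it generate at $\infty$'' term. The paper does something dual: it takes a form $F$ with $Z(F) \supset \PP^1_S - U$ (so $X_0 \mid F$, absorbing $\infty_S$ into $F$) and replaces the term $X_0^N$ with $X_1^N$ to handle $0_S$ --- as the paper puts it, ``$0_S$ plays the role of $\infty$.'' Concretely, with $F_{\mathrm{aff}}$ denoting the affine part, your $\gamma' = s_\Delta (X_1 F_{\mathrm{aff}}) G + X_0^N$ versus the paper's $\gamma = s_\Delta (X_0 F_{\mathrm{aff}}) G + X_1^N$. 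Both visibly avoid the full boundary $\infty_S \cup 0_S \cup Z(F_{\mathrm{aff}}) \cup Z(G)$ in the second factor after restricting the first factor to $U - 0_S$.

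\textbf{Second difference: explicit vs.\ conceptual.} You produce a candidate $s_1$ explicitly via a Bezout identity in $\cal{O}_S[x]$ --- an adaptation of the first, ``explicit'' proof of Proposition~\ref{A1 field injectivity1} --- and then invoke Bertini and a horizontal perturbation to force smoothness and transversality. The paper's proof of the local statement skips the Bezout step entirely and goes directly to the method of Remark~\ref{first bertini}: use the affineness of $U - 0_S$ for the $R^1{pr_1}_* = H^1 = 0$ argument, lift the boundary datum $(s_0, g)$ to a global section, then apply Bertini. Your Bezout step is therefore somewhat redundant given that you use Bertini anyway: it produces one particular lift whose boundary behavior you then perturb, whereas the sheaf-theoretic lifting produces the whole affine space of such lifts at once. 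That said, your route does have the virtue of exhibiting at least one explicit section, and the Bezout coprimality of $fx$ and $g$ (true because $0_S \in V$, so $V(f)$, $V(x)$, $V(g)$ are pairwise disjoint) is the precise analogue of the coprimality of $f$ and $g$ in the field case, which is a nice conceptual point.

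\textbf{Minor corrections.} Your degree count has an off-by-one: with $N = \deg F + \deg G + 2$ and $Q$ homogeneous of degree $\deg G + 2$, the leading behavior of $F'Q|_{X_0 = 0}$ is $X_1^{N+1}$, which requires $fq = x^N + \cdots$ (monic of degree $N$), not $x^{N+1}$. Also, the existence of monic $f, g \in \cal{O}_S[x]$ with $V(f) \supset \A^1_S - U$, $V(g) \supset U - V$, and $V(fx) \cap V(g) = \emptyset$ deserves a sentence: since $S$ is local, $\PP^1_S - U$ and $U - V$ are finite over $S$ (their closed fibers are finite, so quasi-finite plus proper gives finite), and one takes $f, g$ to be monic integral equations for $x$ over these finite $\cal{O}_S$-algebras, adjusting to preserve disjointness from $0_S$. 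Finally, for the $k$-smoothness of $R_0$, you compute $\partial_u$ of the defining equation (nonvanishing on $R_0$), which shows $R_0$ is \'etale over $\spec \cal{O}_S[x]$, hence $k$-smooth since $S$ is; the paper instead checks the closed fiber $(R_0)_{\kappa(S)}$ is a graph and hence smooth. Both are fine.
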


\begin{proof} We follow the proof of Proposition $\ref{A1 field injectivity}$ in Remark $\ref{first bertini}$: since $U - 0_S$ is affine, if a coherent sheaf $M$ on $(U - 0_S) \times_S \bb{P}^1_S$ satisfies $R^1 {pr_1}_*M =0$, then $H^1( (U - 0_S) \times_S \bb{P}^1_S, M) = 0$. Less significantly, $0_S$ plays the role of $\infty$.

We use the standard coordinates $X_0, X_1$ on $\bb{P}_S^1$, so that $0_S \into \bb{P}_S^1$ is defined by the vanishing of $X_1$, and $X_1 \over X_0$ is the canonical coordinate on ${(\bb{A}_S^1)}_{X_0 \neq 0} \subset \bb{P}_S^1$.  Let $u$ denote the canonical coordinate on $U$.  We have $\pic(U \times_S \bb{P}^1_S) \cong \bb{Z}$ via the degree in the second factor.  Let $F$ denote a homogeneous form (i.e., section of some $\cal{O}(a)$ on $\bb{P}^1_S$) such that $Z(F) \into \bb{P}^1_S$ satisfies $Z(F) \supset \bb{P}_S^1 - U$, and let $G$ be such that $Z(G) \supset U-V$.  Note we can achieve that $Z(FG) \cap 0_S  = \emptyset$.  Let also $s_\Delta = X_1 - uX_0$ denote the canonical section of the diagonal bundle $\cal{O}(\Delta_U)$ on $U \times_S \bb{P}_S^1$.

Now we let $\gamma := s_\Delta FG + X_1^N  \in H^0(U \times_S \bb{P}^1_S, pr_2^*\cal{O}(N))$ and $s_0 : = s_\Delta \cdot \gamma  \in H^0(U \times_S \bb{P}^1_S, \cal{O}(\Delta_U) \otimes pr_2^*\cal{O}(N))$.  Of course $R_0 = Z(\gamma)$ and $L = \cal{O}(\Delta_U) \otimes pr_2^*\cal{O}(N) \cong pr_2^* \cal{O}(N+1)$.  To verify the properties of  $Z(s_0 |_{(U -0_S) \times_S \bb{P}^1_S})$ we observe the following:
\begin{itemize}
\item $\gamma |_{U \times_S Z(FG)} = X_1^N  |_{U \times_S Z(FG)} \neq 0$;
\item $\gamma |_{\Delta_U}$ has a zero only along $0_S \times_S 0_S$, hence $\gamma |_{ \Delta_{U-0_S} }$ is nowhere zero; and
\item the restriction $\gamma |_{U_{\kappa(S)} \times_{\kappa(S)} \bb{P}^1_{\kappa(S)}}$ is the graph of a rational map $\bb{P}^1_{\kappa(S)} \to U_{\kappa(S)}$, hence the closed fiber ${(R_0)}_{\kappa(S)}$ is $k$-smooth, hence $R_0$ is $k$-smooth.
\end{itemize}

Consider the exact sheaf sequence:
$$ 0 \to \cal{I}_{(U-0_S) \times_S (0_S \coprod Z(FG))} \otimes L \to L \to L |_{(U-0_S) \times_S (0_S \coprod Z(FG))} \to 0.$$
\noi The bundle $\cal{I}_{(U-0_S) \times_S (0_S \coprod Z(FG))} \otimes L$ induces $\cal{O}(1)$ on every fiber, hence has no $R^1 {pr_1}_*$.  Since $U-0_S$ is affine, this bundle has no $H^1$ and the sequence remains exact after applying $H^0(-)$.  The Picard group of $(U - 0_S) \times_S Z(G)$ is trivial.  Therefore any element of $H^0(L |_{(U-0_S) \times_S (0_S \coprod Z(FG))})$ given by a pair of the form:
$$ \text{$(s_0$ along $(U-0_S) \times_S (0_S \coprod Z(F) )$, some generator $g$ along $(U-0_S) \times_S  Z(G)),$} $$
\noi can be lifted to a global section of $L$.  Since $Z(s_0)$ is $k$-smooth, for a general lift $s_1$ of $(s_0, g)$ for a general $g$, we will have $Z(s_1)$ is $k$-smooth and intersects $Z(s_0)$ transversely.

To see $R_0=Z(\gamma)$ and $Z(s_1)$ are contained in $(U-0_S) \times_S (V-0_S)$, we notice both $\gamma$ and $s_1$ were chosen to generate $L$ along $(U - 0_S) \times_S (0_S \coprod Z(FG) ) \supset (U - 0_S) \times_S (0_S \coprod (\bb{P}^1_S - V) )$. \end{proof}

\begin{corollary} Let $k$ be a field of characteristic $0$.  Let $\cal{F}$ be a homotopy invariant presheaf on $\Sm /k$ with weak transfers for affine varieties.  Let $S$ be a smooth local $k$-scheme, and let $0_S \to V \subset U \subset \bb{A}^1_S$ be open subschemes of $\bb{A}_S^1$.  Then the restriction map $\cal{F}(U - 0_S) \to \cal{F}(V-0_S)$ is injective. \end{corollary}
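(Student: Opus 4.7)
My plan is to follow the template of Corollary \ref{A1 field injectivity}, replacing Proposition \ref{A1 field injectivity1} by its local analogue Proposition \ref{A1 local injectivity}. The idea is to build an $\A$-family of finite correspondences from $U - 0_S$ to itself that deforms the identity plus a correspondence through $V - 0_S$ into a single correspondence through $V - 0_S$; homotopy invariance will then force the restriction $j^*$ to admit a left inverse.

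Let $L, s_0, s_1$ on $U \times_S \bb{P}^1_S$ be as in Proposition \ref{A1 local injectivity}. I consider the total space $\cal{Z} := Z(t s_0 + (1-t) s_1)$ of the pencil on $\A_t \times (U - 0_S) \times_S \bb{P}^1_S$. To realize $\cal{Z}$ as input for weak transfers, I must verify: (i) $\cal{Z}$ lies inside an affine ambient; (ii) $\cal{Z}$ is $k$-smooth and $(\A_t \times (U - 0_S))$-finite; and (iii) the morphism $\cal{Z} \to \A_t \times (U - 0_S)$ admits an $N$-trivial embedding. For (i), property (4) of the proposition gives $s_0 = s_1$ and both are nowhere zero on $(U - 0_S) \times_S (0_S \coprod (\bb{P}^1_S - U))$, so $\cal{Z}$ is disjoint from this locus and lies inside the smooth affine scheme $\A_t \times (U - 0_S) \times_S \bb{A}^1_S$ (using that $U - 0_S$ is affine). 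For (ii), smoothness follows from the smoothness of the fibers $\cal{Z}|_{t = 0} = \Delta_{U - 0_S} \coprod R_0$ and $\cal{Z}|_{t = 1} = Z(s_1)$ (properties (1)--(3)) combined with transversality (property (5)), and finiteness reduces to the $(U - 0_S)$-finiteness of the same two fibers. For (iii), working locally where $L$ is trivial, the differential $d(t s_0 + (1-t) s_1)$ trivializes the conormal bundle of $\cal{Z}$ in the affine ambient.

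With these verifications in place, I apply homotopy invariance to $\pi : \cal{Z} \to \A_t \times (U - 0_S)$, exactly as in Corollary \ref{A1 field injectivity} (invoking the lemma on factorization through rational equivalence), to get
$$t_{\Delta_{U - 0_S}} + t_{R_0} \;=\; t_{Z(s_1)} : \cal{F}(U - 0_S) \to \cal{F}(U - 0_S).$$
The splitting on the left uses disjoint-union additivity (property (1) of Definition \ref{PSOWT}) for the decomposition of $\cal{Z}|_{t = 0}$, and $t_{\Delta_{U - 0_S}} = \id$ by the section-normalization property (property (2) of \ref{PSOWT}). Because $R_0$ and $Z(s_1)$ both lie inside $(U - 0_S) \times_S (V - 0_S)$, the correspondences $t_{R_0}$ and $t_{Z(s_1)}$ factor through $j^* : \cal{F}(U - 0_S) \to \cal{F}(V - 0_S)$, so $(t_{Z(s_1)} - t_{R_0}) \circ j^* = \id$, and $j^*$ is injective.

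The main obstacle is geometric rather than algebraic: I need every component of $\cal{Z}$ to be smooth, to sit inside an affine ambient, to be finite over $\A_t \times (U - 0_S)$, and to carry compatible $N$-trivial embeddings. These are precisely the properties encoded in Proposition \ref{A1 local injectivity}, so once its conclusions are imported the argument is essentially a transcription of the proof of Corollary \ref{A1 field injectivity}, with $0_S$ playing the role played by $\infty$ in the field-level argument.
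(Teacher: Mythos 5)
Your proposal is correct and follows essentially the same route as the paper, expanding the compressed verification (affine ambient, smoothness, finiteness, $N$-triviality) into explicit steps just as the paper does for Corollary \ref{A1 field injectivity}, now with Proposition \ref{A1 local injectivity} supplying the sections and $0_S$ playing the role of $\infty$. One small slip: since $\cal{Z} = Z(t s_0 + (1-t) s_1)$, the fibers are $\cal{Z}|_{t=0} = Z(s_1)$ and $\cal{Z}|_{t=1} = Z(s_0) = \Delta_{U-0_S} \coprod R_0$, the reverse of what you wrote, but this is harmless because homotopy invariance gives a symmetric identity between the two endpoint transfers.
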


\begin{proof} The conditions guarantee the total space $Z(t \cdot s_0 + (1-t) \cdot s_1) \into \bb{A}^1_t \times_k (U-0_S) \times_S \bb{P}^1_S$ is $k$-smooth and finite over $\bb{A}^1_t \times_k (U-0_S)$, with trivial normal bundle.  Hence the homotopy invariance gives the relation:
$$t_{\Delta_{U - 0_S}} + t_{R_0} = t_{Z(s_0)} = t_{Z(s_1)} : \cal{F}(U - 0_S) \to \cal{F}(U - 0_S).$$
Since the correspondences $R_0, Z(s_1)$ factor as $U - 0_S \to V - 0_S \xrightarrow{j} U - 0_S$, we get:
$$(t_{Z(s_1)} - t_{R_0} ) \circ j^* = t_{\Delta_{U - 0_S}} = \id : \cal{F}(U - 0_S) \to \cal{F}(U - 0_S),$$
and so $j^*$ must be injective.   \end{proof}

The constructions made here are similar to those we will make in Proposition $\ref{Nis corr}$.  We employ the following notation.  On $\A_S \times_S \bb{P}^1_S$, $u$ is the canonical coordinate on the first factor and $X_0,X_1$ are homogeneous coordinates on the second factor.  In the second factor we have $0_S = Z(X_1), \infty_S = Z(X_0), 1_S = Z(X_1- X_0)$; also $s_\Delta = X_1 - u X_0$; and $0_S = V(u)$ in the first factor.  Finally we set $U_\infty := \bb{P}^1_S - U$.

\begin{proposition} \label{A1 local corr} Let $k$ be an infinite field, and let $S$ be a smooth local $k$-scheme.  Let $U \subset \bb{A}^1_S$ be an affine open neighborhood of $0_S \into \A_S$.  On $\A_S \times_S \bb{P}^1_S$ there are a line bundle $L$ and global sections $s_0, s_1 \in H^0(\A_S \times_S \bb{P}_S^1, L), s_2 \in H^0(U \times_S \bb{P}_S^1, L)$ with the following properties:

\begin{enumerate}

\item $s_0$ and $s_1$ agree along $\A_S \times_S U_\infty$ and generate there;
\item along $\A_S \times_S 0_S$ we have ${s_0 \over s_1} = u$, the canonical coordinate on the first factor;
\item $s_1$ generates along $\A_S \times_S 0_S$;
\item $s_\Delta \cdot s_1 =  (X_1 -  X_0) \cdot s_0$ along $\A_S \times_S (\infty_S \coprod 0_S)$; both are sections of $L \otimes pr_2^* \cal{O}(1)$;

\item $s_\Delta \cdot s_2 = (X_1 - X_0) \cdot s_0$ along $U \times_S (U_\infty \coprod 0_S)$;

\item $s_2$ generates along $U \times_S (U_\infty \coprod 0_S)$;

\item the incidence $Z(s_1) \cap \Delta$ is disjoint from $0_S \times_S \bb{P}^1_S$;

\item the incidence $Z(s_0) \cap (\A_S \times 1_S) $ is disjoint from $0_S \times_S \bb{P}^1_S$;

\item there is a closed subset $Z_1 \into \A_S$ disjoint from $0_S$ such that:
\begin{enumerate}
\item $Z(t \cdot s_\Delta \cdot s_1 + (1-t) \cdot (X_1 -  X_0) \cdot s_0 ) \into \A_t \times (\A_S - Z_1 -0_S) \times_S (\A_S - 0_S)$ is smooth,
\item the fiber at $t=0$ is the disjoint union $((\A_S - Z_1 - 0_S) \times_S 1_S ) \coprod Z(s_0)$, and
\item the fiber at $t=1$ is the disjoint union $\Delta_{\A_S - Z_1 -0_S} \coprod Z(s_1)$;
\end{enumerate}

\item the incidence $Z(s_2) \cap \Delta$ is disjoint from $0_S \times_S \bb{P}^1_S$;
\item there is a closed subset $Z_2 \into U$ disjoint from $0_S$ such that:
\begin{enumerate}
\item $Z(t \cdot s_\Delta \cdot s_2 + (1-t)\cdot (X_1 -  X_0) \cdot s_0 ) \into \A_t \times (U - Z_2 -0_S) \times_S (U - 0_S)$ is smooth,
\item the fiber at $t=0$ is the disjoint union $((U - Z_2 - 0_S) \times_S 1_S ) \coprod Z(s_0)$, and
\item the fiber at $t=1$ is the disjoint union $\Delta_{U - Z_2 -0_S} \coprod Z(s_2)$; and
\end{enumerate}
\item via the first projections, the zero schemes are finite and admit $N$-trivial embeddings.
\end{enumerate}
\end{proposition}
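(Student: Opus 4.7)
The plan is to parallel the strategy of Proposition~\ref{A1 local injectivity} (via Remark~\ref{first bertini}), but packaged to deliver the four simultaneous relations (the boundary identities (1)--(2) for $s_0,s_1$, and (4)--(5) for the pencils) rather than a single generic section. First I would fix $L = \cal{O}(\Delta) \otimes pr_2^* \cal{O}(N)$ for $N$ large and construct $s_0$ explicitly: choose homogeneous forms $F, G$ on $\bb{P}^1_S$ vanishing on $U_\infty$ and on $U-V$ respectively (arranged so that $Z(FG)\cap(0_S\cup\infty_S)=\emptyset$), and set $\gamma := s_\Delta\cdot FG\cdot X_0^a + X_1^N$ with $s_0 = s_\Delta\cdot\gamma$ (adjusting $a$ so the weights match). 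An explicit fiberwise inspection shows $Z(\gamma)$ is $\A_S$-finite, $k$-smooth, and avoids $0_S\times_S\bb{P}^1_S$; this handles most of the "disjointness from $0_S$" incidences in items (7)--(8), (10).

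Second, I would construct $s_1$ by solving a boundary-value problem: the required behavior specifies $s_1$ on the closed subscheme $\A_S \times_S (U_\infty \coprod 0_S)$ (as $s_0$ on $U_\infty$, and as $s_0/u$ in the natural trivialization along $0_S$). Since $\A_S$ is affine and $L$ restricts to a positive-degree bundle on each $\bb{P}^1$-fiber, the sheaf $\cal{I}_{\A_S \times_S (U_\infty \coprod 0_S)} \otimes L$ has vanishing $R^1 {pr_1}_{\ast}$ and hence vanishing $H^1$, so the boundary data lifts to a global $s_1 \in H^0(L)$. An analogous lifting, using that $U$ is affine, produces $s_2 \in H^0(U\times_S\bb{P}^1_S, L)$ extending the prescribed data on $U\times_S(U_\infty \coprod 0_S)$. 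Using the infiniteness of $k$, I would choose these lifts generically within the affine space of permissible lifts to arrange that $Z(s_1)$ and $Z(s_2)$ are smooth and meet $\Delta$, $Z(\gamma)$, and the loci needed for (7)--(12) transversally away from $0_S$; the fact that $Z(s_0)$ is already smooth and the incidences controlled means the generic lift inherits these properties by the standard pencil-smoothness argument of Remark~\ref{first bertini}, which needs only an infinite base field once the base locus is understood.

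Third, I would analyze the one-parameter families in (9) and (11). Along $\A_S \times_S (\infty_S \coprod 0_S)$ both sections $s_\Delta \cdot s_1$ and $(X_1 - X_0) \cdot s_0$ agree by relation (4), so that locus is forced into the base locus of the pencil and the family $Z(t\cdot s_\Delta\cdot s_1 + (1-t)\cdot(X_1-X_0)\cdot s_0)$ is never smooth there. Excising this unavoidable boundary (i.e., restricting to the second factor $\A_S - 0_S$) and discarding, as $Z_1$, the finite image under $pr_1$ of the remaining base points and incidence points of $Z(s_0)\cap Z(s_1)$ produces the claimed smooth total space. The fiber descriptions at $t=0,1$ then come from direct substitution: at $t=0$ the equation becomes $(X_1 - X_0)\cdot s_0$, whose zero scheme is $(\text{second factor}\times_S 1_S) \coprod Z(s_0)$; at $t=1$ it is $s_\Delta\cdot s_1$, whose zero scheme is $\Delta \coprod Z(s_1)$. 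The analogous argument with $s_2$ in place of $s_1$, restricted to $U$, yields $Z_2$. That $Z_1,Z_2$ avoid $0_S$ uses step one together with the generic choice in step two.

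The main obstacle I anticipate is making the four compatibility relations (1)--(6) compatible with simultaneous smoothness of $Z(s_1), Z(s_2)$, and the two total spaces of item (9), (11), while guaranteeing all bad loci miss the closed point $0_S$ of the local base $S$. The relations force $\A_S \times_S (\infty_S \coprod 0_S)$ into every pencil's base locus, so one cannot hope for smoothness there; the content is that smoothness holds after excising only a finite closed subset not containing $0_S$. The $N$-trivial embeddings of the zero schemes in item (12) are then a formal consequence: each zero scheme lives in a smooth ambient (an open of $\A_S \times_S \bb{A}^1$) and the defining equation $d s$ trivializes the conormal bundle, so one combines Lemmas~\ref{N trivial A1 stable}, \ref{N trivial smooth BC}, \ref{N trivial no kahler} exactly as in the proof of Corollary~\ref{A1 field injectivity}.
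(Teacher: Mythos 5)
Your choice of $s_0 = s_\Delta\cdot\gamma$ is the central problem: it imports the shape of $s_0$ from Proposition~\ref{A1 local injectivity} (where $\Delta$ is \emph{supposed} to be a component of $Z(s_0)$) into a proposition where that shape is precisely what must be avoided.  Concretely, along $\A_S \times_S 0_S$ (i.e.\ $X_1=0$) your $\gamma$ restricts to $-u\,FG\,X_0^{a+1}$, so $s_0|_{0_S} = u^2 X_0^{a+2} F G$ vanishes to order $2$ in $u$.  Hence $s_1 := s_0/u$ restricted to $0_S$ still vanishes at $0_S\times_S 0_S$, which breaks property (3) ($s_1$ must \emph{generate} along $\A_S\times_S 0_S$).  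Moreover, with $Z(s_0)\supset \Delta$ the homotopy in (9) has $\Delta$ as a fixed component, so the fiber decomposition at $t=0$ does not give back a $t_\Delta$-free relation; the identity term cancels before it can do any work in Proposition~\ref{A1 local}.  The paper's $s_0 = X_1^N + s_\Delta F$ is constructed so that $\Delta \not\subset Z(s_0)$ and $s_0/u|_{0_S} = -X_0 F$ is a nowhere-vanishing section, which is exactly what (2) and (3) need.  (There is also no $V$ in this proposition, and the requirement $Z(FG)\cap\infty_S=\emptyset$ is incompatible with $Z(F)=U_\infty\supset\infty_S$; both symptoms of transcribing the setup of Proposition~\ref{A1 local injectivity}.)

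Two secondary gaps.  First, you need a unit condition such as $1+\sum f_i\in\Gamma(S,\cal{O}_S)^\times$ to force the incidence $Z(s_0)\cap(\A_S\times 1_S)$ away from $0_S\times_S\bb{P}^1_S$ (item (8)); a generic lift of boundary data does not automatically arrange this, since you must control behavior at a prescribed closed point of the local base $S$.  Second, the base locus of the pencil $\{t\, s_\Delta s_1 + (1-t)(X_1-X_0)s_0\}$ is $(\Delta\cup Z(s_1))\cap(1_S\cup Z(s_0))$, which has four pairwise intersections to analyze (as the paper does: two are empty, two are supported over $u=1$), not just $Z(s_0)\cap Z(s_1)$.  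The boundary-value-lifting idea for $s_1$ is a reasonable alternative once the correct $s_0$ is fixed, but it has to be paired with all four case checks and the unit condition.
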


The conditions (2) and (3) imply that  $s_0$ generates along $(\A_S - 0_S) \times_S 0_S$.  The conditions (4) and (5) imply $s_2 = s_1$ along $U \times_S (\infty_S \coprod 0_S)$.  With (2) this implies that ${s_0 \over s_2} = u$ along $U \times_S 0_S$.

\begin{proof} The complement of $U \subset \bb{P}^1_S$ is an effective Cartier divisor, so we write $Z(F) = U_\infty$ with $F \in \Gamma(\bb{P}^1_S, \cal{O}(a))$ for some $a \in \bb{Z}_{>0}$.  Note that $U_\infty \supset \infty_S$, so $X_0$ divides $F$.  Now we take $L = pr_2^* (\cal{O}(N))$ with $N=a+1$.  Additionally we choose a global section $A$ of $L |_{U \times \bb{P}^1_S}$ such that $A |_{U \times_S U_\infty} = {X_1^N(X_1 - X_0) \over s_\Delta} |_{U \times_S U_\infty}$, and such that $A$ vanishes along $U \times_S 0_S$.  (This is possible since $\cal{I}_{\A_S \times_S (U_\infty \coprod 0_S)} \otimes L$ is trivial on every fiber of the first projection, hence has no $H^1$.)  In particular $A$ generates along $U \times_S U_\infty$.  

We make the following observations.  The form $F$ is homogeneous in the variables $X_0, X_1$.
\begin{itemize}
\item The zero schemes of the sections $X_1^N$ and $(X_1 - X_0) \cdot F$ are disjoint on $\A_S \times_S \bb{P}^1_S$ since $Z(F) \cap 0_S = \emptyset$, so the total space of the pencil they determine is smooth.  Indeed even over the residue field, the general element of the pencil $\bb{P}(\lambda X_1^N + \mu (X_1 - X_0) \cdot \oline{F}) \into \bb{P}^1_{\kappa(S)} \times \bb{P}^1_{\lambda, \mu}$ is reduced.
\item The zero schemes of the sections $X_1^N$ and $s_\Delta \cdot F$ are disjoint on $(\A_S - 0_S) \times_S \bb{P}^1_S$.  (If $X_1 = 0$ then $F \neq 0$, so a common zero occurs at $X_1 = X_1 - u X_0 =0$, but $u$ is invertible so the equations imply $X_0 = X_1 = 0$.)  Hence the total space of this pencil is also smooth.  (In fact a general element of this pencil is the graph of a morphism even over the residue field, since $Z(X_1^N + s_\Delta \cdot F) \cap Z(X_0 \cdot F) = \emptyset$.)
\item Since $A$ generates along $U \times_S Z(F)$, the pencil determined by $A$ and $F \cdot X_0$ on $U \times_S \bb{P}^1_S$ is base-point free.  So the total space $\bb{P}(\lambda A + \mu F \cdot X_0 ) \into U \times_S \bb{P}^1_S \times \bb{P}^1_{\lambda, \mu}$ is smooth over $k$.
\end{itemize}

Therefore possibly after rescaling $F$ by an element from the ground field, we have that $s_1 := X_1^N + (X_1 - X_0) \cdot F$ and $s_0 := X_1^N + s_\Delta \cdot F \in H^0(\A_S \times_S \bb{P}^1_S, L)$, and also $s_2 := A + F \cdot X_0 \in H^0 (U \times_S \bb{P}^1_S, L)$, have smooth zero schemes.

Additionally we can achieve that the section $X_1^N + s_\Delta \cdot F$ is nonzero along $0_S \times_S 1_S$: if we write $F = \sum f_i X_0^i X_1^{a-i}$ with $f_i \in \Gamma(S, \cal{O}_S)$, then we have $X_1^N + s_\Delta \cdot F |_{0_S \times_S 1_S} = X_1^N ( 1 + \sum f_i)$.  So we require that $1 + \sum_i f_i \in \Gamma(S, \cal{O}_S)^\times$.  

There is one further open requirement on $s_2$: having chosen the sections $s_0, X_1 -X_0, s_\Delta$, their zero schemes are finite over $\A_S$, in particular finite over the local scheme $S \cong 0_S = V(u) \into \A_S$.  Therefore the scheme $Z(s_0) \cup Z(X_1 - X_0) \cup Z(s_\Delta)$ has finitely many closed points lying over $ V(u) \times_S \bb{P}^1_S$.  Now we require the section $s_2$ is nonzero at all of these points.  This can be achieved since $s_2$ was chosen from a base-point free pencil.  Thus (10) is satisfied.

It is now straightforward to check the equalities and generating behavior asserted in (1)-(6).  

To see (7), observe that $s_1 = s_\Delta = u =0$ implies $X_1 = 0$.  But then $s_1 |_{X_1 = 0} = - X_0 F$ has zero scheme disjoint from $X_1=0$.

To see (8), observe that $s_0 = X_1 - X_0 = u =0$ implies that $s_0 = X_0^N(1 + \sum f_i)$.  But one of our conditions is that $(1 + \sum f_i)$ is a unit, hence $s_0 =0$ implies $X_0 =0$, whence $X_1 =0$.

To see (9), first we remove the image in $\A_S$ of the incidences of (7) and (8); as it is the image of the closed incidence set via the proper morphism $\A_S \times_S \bb{P}^1_S$, this set is closed.  By the properties (7) and (8), the image of the incidence is disjoint from $0_S \into \A_S$.  This handles the singularities of the fibers at $t=0$ and $t=1$.  The singularities of the total space are supported on the base locus, so it suffices to show the base locus has non-dense image disjoint from $0_S$.  There are four pairs of components to check:
\begin{itemize}

\item $s_1 = s_0 = 0$.  This implies $u X_0 F = X_0 F$.  Now $s_1 |_{F =0} = X_1^N$, which has zero scheme disjoint from $\A_S \times_S Z(F)$.  Therefore $F \neq 0$.  Since $X_0$ divides $F$, we conclude $X_0 \neq 0$ as well.  Therefore the base locus arising from $Z(s_1) \cap Z(s_0)$ is supported over $u=1$.

\item $s_1 = X_1 - X_0 = 0$.  Since $s_1 |_{X_1 - X_0 = 0} = X_1^N$, this implies $X_1 =0$, whence $X_0 = 0$.

\item $s_{\Delta_{\A_S - 0_S}} = s_0 = 0$.  Since $s_0 |_{\Delta} = X_1^N$, this implies $X_1 =0$, whence $u X_0 = 0$.  Since $u$ is invertible, this means $X_0 = 0$.

\item $s_{\Delta_{\A_S - 0_S}} = X_1 - X_0 = 0$.  If $X_1 = X_0$ then neither can be zero.  Then $s_{\Delta_{\A_S - 0_S}} |_{X_1 -X_0 = 0} = (1-u)X_0$, so the base locus arising from $\Delta \cap ( \A_S \times_S 1_S)$ is supported over $u=1$.

\end{itemize}

Hence the total space is smooth if we remove $V(u-1) \into \A_S - 0_S$.  Then if we take $Z$ to be $V(u-1)$ together with the image of the incidences discussed above, the total space also decomposes at $t=0,1$ as required.

The set $Z_2$ in (11) is obtained similarly: we remove the image in $U$ of the incidences in (7) and (10).  By our choice of $s_2$ we know the incidences $Z(s_2) \cap Z(s_0)$ and $Z(s_2) \cap U \times_S 1_S$ are disjoint from $0_S \times_S \bb{P}^1_S$.  Then we take $Z_2$ to be the union of the images of these incidences in $U$.  By construction the total space of $Z(t \cdot s_0 \cdot (X_1 - X_0) + (1-t) \cdot s_\Delta \cdot s_2) \into \A_t \times (U - Z_2 - 0_S) \times_S (U - 0_S)$ is $k$-smooth and decomposes at $t=0,1$ as desired.

All of the zero schemes are contained in $\A_S \times_S \A_S$ or its affine open subscheme $U \times_S \A_S$, or the product of one of these schemes with the affine line over $k$.  These are smooth affine $k$-schemes with trivial sheaf of K\"ahler differentials, hence the conormal bundle of a smooth Cartier divisor is trivial.  Therefore the zero schemes admit $N$-trivial embeddings via the first projection; they are finite since the first projection factors as a closed immersion followed by the projection away from $\PP^1$.  This shows (12).
\end{proof}

\begin{proposition} \label{A1 local} Let $k$ be a field of characteristic $0$.  Let $\cal{F}$ be a homotopy invariant presheaf on $\Sm / k$ with oriented weak transfers.  Let $S$ be a smooth local $k$-scheme.  Let $U$ be an open affine neighborhood of $0_S \into \A_S$.  Then the canonical map $\cal{F}(\A_S - 0_S) / \cal{F} (\A_S) \to \cal{F}(U - 0_S) / \cal{F}(U)$ is an isomorphism. \end{proposition}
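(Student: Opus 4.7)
The plan is to construct an explicit inverse $\overline{\Psi}$ to the canonical map $\overline{\rho} \colon \cal{F}(\A_S - 0_S)/\cal{F}(\A_S) \to \cal{F}(U - 0_S)/\cal{F}(U)$ using the correspondence $Z(s_0)$ from Proposition~\ref{A1 local corr}.  Since $Z(s_0) \subset \A_S \times_S U$ meets $\A_S \times_S 0_S$ only at the point $(0_S, 0_S)$, the restriction $Z(s_0)|_{\A_S - 0_S}$ lies in $(\A_S - 0_S) \times_S (U - 0_S)$, is finite over $\A_S - 0_S$, and, by (12) of Proposition~\ref{A1 local corr}, admits an $N$-trivial embedding.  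It therefore defines a transfer
\[
\Psi := t_{Z(s_0)|_{\A_S - 0_S}} \colon \cal{F}(U - 0_S) \to \cal{F}(\A_S - 0_S).
\]
Writing $e \colon \A_S - 0_S \hookrightarrow \A_S$ and $e_U \colon U - 0_S \hookrightarrow U$, the smooth base change property (4) of weak transfers gives $\Psi \circ e_U^* = e^* \circ t_{Z(s_0)}$, where $t_{Z(s_0)} \colon \cal{F}(U) \to \cal{F}(\A_S)$ uses the correspondence $Z(s_0) \subset \A_S \times_S U$; hence $\Psi$ sends $e_U^* \cal{F}(U)$ into $e^* \cal{F}(\A_S)$ and descends to the promised $\overline{\Psi}$.

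I then apply factorization through rational equivalence to the two $\bb{A}^1$-homotopies in properties (9) and (11).  For (9), with $W := \A_S - Z_1 - 0_S$, the decomposition of the fibers at $t = 0, 1$ together with the section compatibility (2) of weak transfers (which identifies the transfer along $\Delta_W$ with $j_W^* \colon \cal{F}(\A_S - 0_S) \to \cal{F}(W)$ and the transfer along $W \times_S 1_S$ with pullback by the constant map to $1_S$) yields the equation
\[
j_W^* + t_{Z(s_1)|_W} = \mathrm{const}_{1_S}^* + t_{Z(s_0)|_W}
\]
of maps $\cal{F}(\A_S - 0_S) \to \cal{F}(W)$.  The two terms $\mathrm{const}_{1_S}^*$ and $t_{Z(s_1)|_W}$ both factor through $\cal{F}(\A_S) \to \cal{F}(W)$ (by homotopy invariance for the former, and by $\A_S$-finiteness of $Z(s_1)$ for the latter), whereas property (4) yields $t_{Z(s_0)|_W} = j_W^* \circ \Psi \circ \rho$.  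Thus $j_W^*(\alpha - \Psi(\rho(\alpha)))$ lies in the image of $\cal{F}(\A_S) \to \cal{F}(W)$; the injectivity of $\cal{F}(\A_S - 0_S) \to \cal{F}(W)$ from the Corollary following Proposition~\ref{A1 local injectivity} then forces
\[
\alpha \equiv \Psi(\rho(\alpha)) \pmod{e^* \cal{F}(\A_S)} \quad \text{for every } \alpha \in \cal{F}(\A_S - 0_S).
\]
The identical argument applied to homotopy (11) on $U - Z_2 - 0_S$, combined with the base change identity $\rho \circ \Psi = t_{Z(s_0)|_{U - 0_S}}$, produces $\beta \equiv \rho(\Psi(\beta)) \pmod{e_U^* \cal{F}(U)}$ for every $\beta \in \cal{F}(U - 0_S)$.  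These two congruences identify $\overline{\Psi}$ as the two-sided inverse of $\overline{\rho}$.

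The main technical subtlety is tracking the distinct restrictions of $Z(s_0)$: the naive complement $Z(s_0) \setminus \{(0_S, 0_S)\}$ is not finite over $\A_S$ (it is an open subset of a connected affine curve over $\A_S$), so it furnishes no transfer $\cal{F}(\A_S - 0_S) \to \cal{F}(\A_S)$; instead, the two well-defined transfers $t_{Z(s_0)} \colon \cal{F}(U) \to \cal{F}(\A_S)$ and $\Psi \colon \cal{F}(U - 0_S) \to \cal{F}(\A_S - 0_S)$ are interlocked by smooth base change precisely so as to produce the splitting modulo $\cal{F}(\A_S)$ and $\cal{F}(U)$.  The orientation hypothesis enters both through the assembly of $N$-trivial embeddings in Proposition~\ref{A1 local corr} and through Lemma~\ref{ind of embedding}, which ensures that the transfer maps appearing in the application of factorization through rational equivalence do not depend on the particular choices of embedding.
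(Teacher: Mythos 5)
Your proof is correct, and it takes a modestly different route in the descent step. The paper defines $\psi := t_{Z(s_0)} - t_{Z(s_1)}$ and proves (as Property (1)) that $\psi$ literally annihilates the image of $\cal{F}(U)$, which requires constructing an extra $\A^1$-homotopy $Z(t \cdot s_0 + (1-t) \cdot s_1)$ over $(\A_S - V(u-1) - 0_S) \times_S U$ and then invoking injectivity to pass back to $\A_S - 0_S$; the subtracted $t_{Z(s_1)}$ term is there precisely to make this annihilation hold on the nose, and it vanishes after passage to $\cal{F}(\A_S - 0_S)/\cal{F}(\A_S)$, so the two constructions give the same class $\oline{\psi} = \oline{t_{Z(s_0)}}$. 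You instead take $\Psi = t_{Z(s_0)}$ directly and observe that, because $Z(s_0) \subset \A_S \times_S U$ is already an $\A_S$-finite, $N$-trivially embedded correspondence (item (12) of Proposition~\ref{A1 local corr}), smooth base change along $\A_S - 0_S \hookrightarrow \A_S$ gives $\Psi \circ e_U^* = e^* \circ t_{Z(s_0)}$, which lands $\cal{F}(U)$ inside the image of $\cal{F}(\A_S)$. This is weaker than annihilation but enough to descend, and it saves one homotopy and the detour through $V(u-1)$. The two congruences you then derive from homotopies (9) and (11), using factorization through rational equivalence and the injectivity statement after Proposition~\ref{A1 local injectivity}, are essentially identical to the paper's Properties (2) and (3). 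One small inaccuracy worth noting: the map $\mathrm{const}_{1_S}^* : \cal{F}(\A_S - 0_S) \to \cal{F}(W)$ factors through $\cal{F}(\A_S)$ not really ``by homotopy invariance'' but simply because the constant map $W \to 1_S \hookrightarrow \A_S - 0_S$ factors through the projection $\A_S \to S$ composed with the unit section, hence through $\cal{F}(\A_S) \to \cal{F}(W)$; this is the same reason $\oline{t_{Z(X_1 - X_0)}} = 0$ in the paper.
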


\begin{proof} We will define a map $\psi : \cal{F}(U - 0_S) \to \cal{F}(\A_S - 0_S)$ with the properties of the map $\psi$ constructed in Theorem $\ref{general Nis invariance}$.  By the first property $\psi$ descends to a map $\cal{F}(U - 0_S) / \cal{F}(U)  \to \cal{F}(\A_S - 0_S)$.  Then the second and third properties show $\psi$ induces the inverse to the natural map.

We use the line bundles and sections constructed in Proposition $\ref{A1 local corr}$ to construct $\psi$.  The sections $s_0$ and $s_1$ determine smooth correspondences $(\A_S - 0_S) \to (U - 0_S)$, hence morphisms $t_{Z(s_0)}, t_{Z(s_1)} : \cal{F}(U - 0_S) \to \cal{F}(\A - 0_S)$ on the presheaf $\cal{F}$.  Now we define
$$\psi := t_{Z(s_0)} - t_{Z(s_1)} : \cal{F}(U - 0_S) \to \cal{F}(\A - 0_S).$$

We have $Z(X_1 - X_0) = \A_S \times_S 1_S \into \A_S \times_S (U-0_S)$.  Therefore $\oline{t_{Z(X_1 - X_0)}} : \cal{F}(U - 0_S) \to \cal{F}(\A_S - 0_S) / \cal{F}(\A_S)$ is zero.  Similarly, considering where $s_1$ was required to generate, we have $Z(s_1) \into \A_S \times_S (U - 0_S)$, hence $\oline{t_{Z(s_1)}} : \cal{F}(U - 0_S) \to \cal{F}(\A_S - 0_S) / \cal{F}(\A_S)$ is zero.  Therefore $\oline{\psi} = \oline{t_{Z(s_0)}} : \cal{F}(U - 0_S) \to \cal{F}(\A - 0_S)/ \cal{F}(\A_S)$.

\textit{Property (1).}  We claim the composition
$$\cal{F}(U) \xrightarrow{j_{U - 0_S \subset U}^*} \cal{F}(U - 0_S) \xrightarrow{\psi} \cal{F}(\A - 0_S)$$
is zero.  For this it suffices to construct an $\A$-homotopy between $s_0$ and $s_1$ on $(\A - 0_S) \times_S U$.  Since $s_0  = s_1$ along $\A_S \times_S U_\infty$ and both sections generate there, we may use the zero scheme $Z(t \cdot s_0 + (1-t) \cdot s_1) \into \A_t \times (\A_S - V(u-1) - 0_S) \times_S U$.  We removed the base locus so the total space is guaranteed to be smooth.  Since the transfers are compatible with open immersions in the base, this homotopy shows that $\cal{F}(U) \to \cal{F}(U - 0_S) \xrightarrow{\psi} \cal{F}(\A_S - 0_S) \to \cal{F}(\A_S - V(u-1) - 0_S)$ is zero.  But $\cal{F}(\A_S - 0_S) \to \cal{F}(\A_S - V(u-1) - 0_S)$ is injective by Proposition $\ref{A1 local injectivity}$, so the claim follows.

\textit{Property (2).} We claim the composition
$$\cal{F}(\A_S - 0_S) / \cal{F}(\A_S) \xrightarrow{\oline{j_{U - 0_S \subset \A_S - 0_S}^* }} \cal{F}(U - 0_S) / \cal{F}(U) \xrightarrow{\oline{\psi}} \cal{F}(\A_S - 0_S) / \cal{F}(\A_S)$$
is the identity.  We use the total space $Z(t \cdot  s_\Delta \cdot s_1 + (1-t)\cdot (X_1 - X_0) \cdot s_0)$, regarded as a closed subscheme in $\A_t \times (\A_S - Z_1 - 0_S) \times_S (\A_S - 0_S)$.  This shows that, as maps $\cal{F}(\A_S - 0_S) \to \cal{F}(\A_S - Z_1 - 0_S)$,
$$0 = j_{\A_S - Z_1 - 0_S \subset \A_S - 0_S}^* \circ (t_{\Delta_{\A_S - 0_S}} +  (t_{Z(s_1)} - t_{Z(X_1 - X_0)} - t_{Z(s_0)} ) \circ  j_{U - 0_S \subset \A_S - 0_S}^*).$$

By Proposition $\ref{A1 local injectivity}$ we conclude that, as maps $\cal{F}(\A_S - 0_S) \to \cal{F}(\A_S - 0_S)$,
$$0 = t_{\Delta_{\A_S - 0_S}} + (t_{Z(s_1)} - t_{Z(X_1 - X_0)} - t_{Z(s_0)} ) \circ  j_{U - 0_S \subset \A_S - 0_S}^*.$$
Since $\oline{t_{Z(s_1)}} = \oline{t_{Z(X_1- X_0)}} = 0$ and $\oline{\psi} = \oline{t_{Z(s_0)}}$, we get 
$$0 = \oline{t_{\Delta_{\A_S - 0_S}}}  - ( \oline{\psi}  \circ j_{U - 0_S \subset \A_S - 0_S}^*) : \cal{F}(\A_S - 0_S) \to \cal{F}(\A_S - 0_S) / \cal{F}(\A_S)$$
and the claim follows.

\textit{Property (3).}  We claim the composition
$$\cal{F}(U - 0_S) / \cal{F}(U) \xrightarrow{\oline{\psi}} \cal{F}(\A_S - 0_S) / \cal{F}(\A_S)  \xrightarrow{\oline{j_{U - 0_S \subset \A_S - 0_S}^* }}  \cal{F}(U - 0_S) / \cal{F}(U) $$
is the identity.  The ideas are similar, though here we require a homotopy respecting all of $U_\infty$.  We use the total space of $Z(t \cdot s_\Delta \cdot s_2 + (1-t) \cdot (X_1 - X_0) \cdot s_0 ) \into \A_t \times (U - Z_2 - 0_S) \times_S (U - 0_S)$.  By looking at how the fibers at $t=0,1$ decompose, we conclude that, as maps $\cal{F}(U - 0_S) \to \cal{F}(U - Z_2 - 0_S)$,
$$0= j_{U - Z_2 - 0_S \subset U - 0_S}^* \circ (t_{\Delta_{U - 0_S}} + t_{Z(s_2)} + ( j^*_{U - 0_S \subset \A_S -0_S } \circ ( t_{Z(s_0)} - t_{Z(X_1 - X_0)})).$$
By Proposition $\ref{A1 local injectivity}$ we conclude that 
$$t_{\Delta_{U - 0_S}} + t_{Z(s_2)} = j^*_{U - 0_S \subset \A_S -0_S } \circ (t_{Z(s_0)} + t_{Z(X_1 - X_0)} ) : \cal{F} (U - 0_S) \to \cal{F}(U - 0_S).$$

We have $\oline{t_{Z(s_2)}} = \oline{t_{Z(X_1 - X_0)}} = 0 : \cal{F}(U-0_S) \to \cal{F}(U - 0_S) / \cal{F}(U)$ since the sections $s_2, X_1 - X_0$ extend to nonvanishing sections on $U \times_S (U_\infty \coprod 0_S)$, i.e., through $0_S$ in the base.  Combining this with $\oline{\psi} = \oline{t_{Z(s_0)}}$ and the fact that the transfers are compatible with the \'etale base change $U - 0_S \to \A_S - 0_S$, we find $$\oline{t_{\Delta_{U - 0_S}}}  = \oline{j_{U - 0_S \subset \A_S - 0_S}^*} \circ \oline{\psi} : \cal{F}(U - 0_S) \to \cal{F}(U - 0_S) / \cal{F}(U)$$
as desired.  \end{proof}

\begin{proposition} \label{V 4.11} Let $\cal{F}$ be a presheaf of abelian groups on $\Sm / k$ such that for any smooth local $k$-scheme $S$ and any open affine neighborhood $U$ of $0_S \into \A_S$, the natural map $\cal{F}(\A_S - 0_S) / \cal{F} (\A_S) \to \cal{F}(U - 0_S) / \cal{F}(U)$ is an isomorphism.  Then for any smooth $k$-scheme $Y$ there is a canonical isomorphism of sheaves 
$${(\cal{F}_{-1})}_{Zar} \cong \cal{F}_{(Y \times \A, Y \times 0 )}$$
on $Y_{Zar}$.
\end{proposition}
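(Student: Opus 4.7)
The plan is to prove the isomorphism by comparing stalks at each point. For $y \in Y$, write $S_y := \spec \cal{O}_{Y,y}$ for the local scheme at $y$, so that $\A_{S_y} = \lim_V V \times \A$ as $V$ ranges over open affine neighborhoods of $y$ in $Y$.

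First I would construct the natural morphism of presheaves $\cal{F}_{-1} \to \cal{F}_{(Y\times\A, Y\times 0)}$. For an open $V \subset Y$ and an open neighborhood $W$ of $V \times 0$ in $V \times \A$, the inclusions $W - V \times 0 \hookrightarrow V \times (\A - 0)$ and $W \hookrightarrow V \times \A$ induce by pullback a homomorphism
$$\cal{F}_{-1}(V) \;=\; \cal{F}(V \times (\A - 0))/\cal{F}(V \times \A) \;\longrightarrow\; \cal{F}(W - V\times 0)/\cal{F}(W),$$
compatibly in $W$, hence a map into the limit defining $\cal{F}_{(Y\times\A, Y\times 0)}(V)$. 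This is natural in $V$; after sheafifying the source we obtain the candidate map whose being an isomorphism we want to establish.

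Next I would identify the two stalks at $y$. The stalk of $(\cal{F}_{-1})_{Zar}$ at $y$ is the colimit of $\cal{F}(V \times (\A - 0))/\cal{F}(V \times \A)$ as $V$ shrinks, and this equals $\cal{F}(\A_{S_y} - 0_{S_y})/\cal{F}(\A_{S_y})$ by our convention for evaluating presheaves at essentially smooth schemes. For the other side, I would observe that the open neighborhoods $W$ of $V \times 0$ in $V \times \A$ (for varying $V \ni y$) pull back to a cofinal system of open neighborhoods $U$ of $0_{S_y}$ in $\A_{S_y}$, and conversely every such $U$ descends to some $W$ over a sufficiently small $V$. Commuting colimit and limit via this cofinality identifies the stalk of $\cal{F}_{(Y \times \A, Y \times 0)}$ at $y$ with $\lim_U \cal{F}(U - 0_{S_y})/\cal{F}(U)$, where $U$ ranges over open affine neighborhoods of $0_{S_y}$ in $\A_{S_y}$ (affines being cofinal because $\A_{S_y}$ is affine and $0_{S_y}$ is a point).

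Finally, the hypothesis of the proposition says precisely that each transition map $\cal{F}(\A_{S_y} - 0_{S_y})/\cal{F}(\A_{S_y}) \to \cal{F}(U - 0_{S_y})/\cal{F}(U)$ is an isomorphism for every such $U$. Therefore the compatible system of these isomorphisms induces an isomorphism from the source onto the inverse limit, which coincides with our constructed map on stalks. Since a morphism of Zariski sheaves which is an isomorphism on every stalk is itself an isomorphism, we conclude.

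The main obstacle, though routine, is the bookkeeping of the colimit-limit interchange: one must verify that open (affine) neighborhoods of $0_{S_y}$ in $\A_{S_y}$ are indeed cofinal among the base changes of all pairs $(V, W)$ with $W \supset V \times 0$ open in $V \times \A$, and that this cofinality legitimately computes the stalk of $\cal{F}_{(Y\times\A,Y\times 0)}$. Once this is in place, applying the hypothesis is immediate.
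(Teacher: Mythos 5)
Your proof follows the same strategy as the paper's: reduce the statement to a comparison of stalks, exploit the cofinality of affine open neighborhoods of $0_{S_y}$ in $\bb{A}^1_{S_y}$, and then invoke the hypothesis to see that all transition maps in the resulting inverse system are isomorphisms. The paper's own proof consists of exactly one observation — that the limit defining the right-hand side may be computed over affine open neighborhoods of $0_S$ — followed by a citation to Voevodsky's \cite[Prop.~4.11]{CTPST}; your argument is in effect a transcription of that cited proof, so the approaches coincide.

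Two small cautions. First, the colimit--limit interchange, which you flag as routine, does deserve a sentence of justification: one must check that every quasi-compact open $U \supset 0_{S_y}$ in $\bb{A}^1_{S_y}$ descends to some open $W \supset V' \times 0$ in $V' \times \bb{A}^1$ for $V'$ small enough (this follows from the standard limit arguments of EGA~IV~\S 8, together with the observation that the closed complement of a descended $W$ can be separated from $V' \times 0$ after shrinking $V'$). Your cofinality claim that affine $U$ suffice is correct because $0_{S_y}$ is irreducible with a unique closed point, so any open neighborhood of that closed point automatically contains all of $0_{S_y}$ by stability of opens under generization. Second, be precise about the definition of $\cal{F}_{(Y\times\A, Y\times 0)}$ as a \emph{sheaf} on $Y_{Zar}$ and about the exact indexing of the defining limit — your final step (``a morphism of Zariski sheaves which is an isomorphism on every stalk is itself an isomorphism'') silently uses that the right-hand side is already a sheaf, and the indexing category determines whether the stalk computation is a genuine inverse limit or collapses trivially; matching Voevodsky's conventions carefully here is the substance of the paper's one-line remark.
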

\begin{proof} The limit on the right hand side may be computed over all affine open neighborhoods of $0_S$.  Then this simply isolates the implication shown in \cite[Prop.~4.11]{CTPST}. \end{proof}

\begin{corollary} Let $k$ be a field of characteristic $0$.  Let $\cal{F}$ be a homotopy invariant presheaf on $\Sm / k$ with oriented weak transfers.  Then for any smooth $k$-scheme $Y$ there is a canonical isomorphism of sheaves 
$${(\cal{F}_{-1})}_{Zar} \cong \cal{F}_{(Y \times \A, Y \times 0 )}$$
on $Y_{Zar}$.
\end{corollary}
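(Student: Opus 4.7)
The plan is to simply combine Proposition \ref{A1 local} and Proposition \ref{V 4.11}: the former verifies, for our class of presheaves, the hypothesis of the latter, which is a purely formal statement that produces the desired sheaf isomorphism. So there is essentially nothing new to prove; the work has been done in the preceding constructions.

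More concretely, first I would invoke Proposition \ref{A1 local}: for every smooth local $k$-scheme $S$ and every open affine neighborhood $U$ of $0_S \hookrightarrow \A_S$, the restriction map
\[
\cal{F}(\A_S - 0_S)/\cal{F}(\A_S) \longrightarrow \cal{F}(U - 0_S)/\cal{F}(U)
\]
is an isomorphism. This uses characteristic zero and the orientation assumption, via the smooth correspondences constructed in Proposition \ref{A1 local corr} and the injectivity statement of Proposition \ref{A1 local injectivity}. Second, I would feed this input into Proposition \ref{V 4.11}, which is a formal statement about presheaves of abelian groups on $\Sm/k$: whenever the hypothesis just verified holds, one obtains a canonical isomorphism
\[
{(\cal{F}_{-1})}_{Zar} \cong \cal{F}_{(Y \times \A, Y \times 0)}
\]
of sheaves on $Y_{Zar}$, for any $Y \in \Sm/k$. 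This isolates the corresponding implication inside the proof of \cite[Prop.~4.11]{CTPST}.

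There is no real obstacle here: all technical difficulties have already been absorbed into Proposition \ref{A1 local} (which in turn relies on the Bertini-type Mayer--Vietoris machinery of Subsection \ref{main idea}, the affine-line injectivity of Proposition \ref{A1 local injectivity}, and the orientation-sensitive construction of Proposition \ref{A1 local corr}). The only point to double-check when writing it out is that the presheaf $\cal{F}$ in the hypothesis of Proposition \ref{V 4.11} need only satisfy the indicated quotient-isomorphism property for all smooth local $S$ and affine $U \ni 0_S$, which is exactly the output of Proposition \ref{A1 local}; the characteristic zero hypothesis propagates through from there.
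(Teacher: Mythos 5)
Your proposal is correct and identical to the paper's own proof: the paper's proof of this corollary is literally ``Combine Proposition \ref{A1 local} and Proposition \ref{V 4.11}.'' Your elaboration of why the hypotheses chain together is accurate and adds no new ideas beyond what the paper intends.
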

\begin{proof} Combine Proposition $\ref{A1 local}$ and Proposition $\ref{V 4.11}$. \end{proof}

\section{Nisnevich excision} \label{Nis section}
We will need the following excision statement for open subschemes of the affine line.  Let $U \subset \bb{A}^1_k$ be an open subscheme.  Then for any $a \in \cal{F}_{Nis}(U)$, and any closed point $x \in U$, there exists an open neighborhood $V \ni x$ such that the restriction of $a$ belongs to $\cal{F}(V) \subset \cal{F}_{Nis}(V)$.  More generally we will need an excision statement for a local smooth pair.  Both of these follow from the analogue of \cite[4.12]{CTPST}.  We also discuss applications to ``contractions" of presheaves.

\begin{theorem}\label{general Nis invariance}
Let $k$ be a perfect infinite field, and let $\cal{F}$ be a homotopy invariant presheaf on $\Sm / k$ with oriented weak transfers for affine varieties.
\noi Let $f : X_1 \to X_2$ be an \'{e}tale morphism in $\Sm / k$ with $\dim (X_1) = \dim (X_2) = n$, and let $Z_2 \into X_2$ be a smooth divisor such that $Z_1 := f^{-1}Z_2 \to Z_2$ is an isomorphism.  Then for any closed point $z \in Z_2$ there exists an open subscheme $z \in V \subset X_2$ and a morphism $\psi : \cal{F}(X_1 - Z_1) \to \cal{F}(V - V \cap Z_2)$ with the following properties.  

\begin{enumerate}
\item The composition $\cal{F}(X_1) \to \cal{F}(X_1 - Z_1) \xrightarrow{\psi} \cal{F}(V - V \cap Z_2)$ is zero.
\item The following diagram commutes:
$$\xym{
\cal{F}(X_2 - Z_2)  \ar[d]_-{f^*} \ar[rd] \\
\cal{F}(X_1 - Z_1) \ar[r]^-{\overline{\psi}} &  \cal{F}(V - V \cap Z_2) / \cal{F}(V) \\ }$$

\item The following diagram commutes:
$$\xym{
\cal{F}(X_1 - Z_1) \ar[rd] \ar[r]^-{\overline{\psi}} &  \cal{F}(V - V \cap Z_2) / \cal{F}(V) \ar[d]^-{\overline{f^*}} \\ 
&  \cal{F}(f^{-1}(V) - f^{-1}(V) \cap Z_1) / \cal{F}(f^{-1}(V)) \\ }$$
\end{enumerate}

(We use a bar to indicate an induced quotient map; unlabeled maps are restriction maps or maps induced by restriction maps.)
\end{theorem}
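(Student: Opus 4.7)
The plan is to generalize the construction of Proposition \ref{A1 local corr} and Proposition \ref{A1 local} from the model pair $(\A_S, 0_S)$ to the étale Nisnevich square $f : (X_1, Z_1) \to (X_2, Z_2)$ of the hypothesis. The map $\psi$ will be defined as a difference of weak transfers along smooth correspondences cut out by carefully chosen sections of a line bundle on $X_2 \times_S \bb{P}^1_S$, and the three required properties will be verified via explicit $\A$-homotopies, exactly as in Proposition \ref{A1 local}.

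First I would reduce to a convenient geometric setup. Since $Z_2 \into X_2$ is a smooth divisor, after shrinking $X_2$ around $z$ there is an étale morphism $\tau : X_2 \to \bb{A}^{n-1} \times \A$ with $Z_2 = \tau^{-1}(\bb{A}^{n-1} \times 0)$. Letting $S$ be the smooth local scheme of $\bb{A}^{n-1}$ at $\tau(z)$, the induced map $X_2 \to \A_S$ is étale near $Z_2$ and restricts to an isomorphism $Z_2 \cong 0_S$. Applying Lemma \ref{geom pres} after further shrinking yields a finite morphism $\pi_2 : X_2 \to \A_S$ with $\pi_2^{-1}(0_S) = Z_2 \coprod R_2$, $R_2$ disjoint from $Z_2$ and contained in an affine open; because $f$ is étale and $Z_1 \to Z_2$ is an isomorphism, the composition $\pi_1 := \pi_2 \circ f$ satisfies an analogous decomposition $\pi_1^{-1}(0_S) = Z_1 \coprod R_1$. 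This puts us in a setting geometrically parallel to Proposition \ref{A1 local corr}, with $\A_S$ there replaced by $X_2$.

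Next I would construct the geometric input: a line bundle $L$ and sections $s_0, s_1 \in H^0(X_2 \times_S \bb{P}^1_S, L)$ together with $s_2 \in H^0(V \times_S \bb{P}^1_S, L)$ for a smaller neighborhood $V$ of $z$, satisfying the natural analogues of conditions (1)--(12) of Proposition \ref{A1 local corr}. In particular $s_0 = s_\Delta \cdot \gamma$ for an auxiliary section $\gamma$ cutting out the ``relative'' part; $s_1$ and $s_2$ generate along the relevant boundary strata and agree with $s_0$ away from $Z_2$; all zero schemes are smooth, finite over $X_2$ (resp.\ $V$), and admit $N$-trivial embeddings; and the pencil $Z(t s_0 + (1-t) s_1)$ together with its refinements becomes smooth after deleting base loci. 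The sections are produced by the recipe of Proposition \ref{A1 local corr}: cohomology vanishing on the affine bases lifts prescribed boundary data, and Bertini in characteristic zero gives smoothness of general members of the pencils. The $N$-trivial embeddings come from further shrinking so that $\Omega^1_{X_2/k}$ is trivial, whence the conormal bundle of a smooth Cartier divisor is trivialized by the derivative of its local equation. Then I define $\psi := t_{Z(s_0)} - t_{Z(s_1)} : \cal{F}(X_1 - Z_1) \to \cal{F}(V - V \cap Z_2)$ using the oriented weak transfers along the correspondences $Z(s_i) \to V$, first pulled back along $f$. Property (1) follows from the $\A$-homotopy $Z(t s_0 + (1-t) s_1)$, which gives $t_{Z(s_0)} = t_{Z(s_1)}$ as maps out of $\cal{F}(X_1)$; properties (2) and (3) follow from the analogues of the pencils in items (9) and (11) of Proposition \ref{A1 local corr}: each has smooth total space, and its fibers at $t = 0$ and $t = 1$ decompose as the diagonal together with $Z(s_0)$ on one side and a horizontal section together with $Z(s_i)$ on the other, so that passing to the quotient modulo $\cal{F}(V)$ (resp.\ $\cal{F}(f^{-1}(V))$) kills the transfers coming from components which extend across $Z_2$ (resp.\ $Z_1$), leaving the desired identities.

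The main obstacle is simultaneously arranging all of these pencils to be smooth and finite with $N$-trivial embeddings, while satisfying the many incidence constraints against $\Delta$, $Z_2$, $R_2$, and the étale cover $f$, and extending consistently across $\pi_2^{-1}(\infty)$ and through $z$. This is essentially the verification already carried out in Proposition \ref{A1 local corr}, now one step more general; characteristic zero is needed for Bertini, infinitude of $k$ for Lemma \ref{geom pres}, and the orientation of the weak transfers enters via Remark \ref{orientation necessary} when summing transfers over reducible zero schemes where the conormal trivializations do not globally glue.
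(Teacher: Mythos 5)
Your high-level strategy---defining $\psi$ as a difference of oriented weak transfers along smooth correspondences cut out by sections of a relatively ample line bundle, then verifying the three properties by exhibiting explicit $\A$-homotopies with split special fibers---is indeed the paper's strategy. But the construction you sketch has a genuine gap that comes precisely from the feature that distinguishes Nisnevich excision from the Zariski model: $f:X_1 \to X_2$ is only \'etale, not an open immersion.

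You place the sections $s_0, s_1$ on $X_2 \times_S \bb{P}^1_S$ (and $s_2$ on $V\times_S \bb{P}^1_S$), imitating Proposition~\ref{A1 local corr} where the sections live on $\A_S \times_S \bb{P}^1_S$ and the zero schemes land in $\A_S \times_S U$ because $U \subset \A_S$. Here that no longer works: we need the correspondences $Z(s_i)$ to go from $V - V\cap Z_2$ into $X_1 - Z_1$, and $X_1$ is not a subscheme of $X_2$ or of $\bb{P}^1_S$. The phrase ``first pulled back along $f$'' does not repair this: pulling a cycle in $V\times_S X_2$ back along $1\times f$ yields a cycle in $V \times_S X_1$, but then property (2)---the compatibility with restriction from $X_2 - Z_2$---becomes a nontrivial pushforward statement, not a pullback statement, and there is no reason the pulled-back cycle is smooth or irreducible when $f$ has degree $>1$. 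What the paper actually does (Lemma~\ref{general Nis geom presentation}, replacing your use of Lemma~\ref{geom pres}) is to simultaneously compactify $X_1$ and $X_2$ to normal $S$-projective curves $\oline{X_1} \to \oline{X_2}$ with a finite $\oline{f}$ extending $f$ and with $Z_i \cup X_{\infty,i}$ having an $S$-affine neighborhood. Then Proposition~\ref{Nis corr} constructs $s_0, s_1$ on $V \times_S \oline{X_1}$ (the \emph{source}), so $Z(s_0), Z(s_1) \into (V - V\cap Z_2)\times_S (X_1 - Z_1)$ are honest smooth correspondences to $X_1$.

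The second missing ingredient is the mechanism for property (2). With $s_0$ living on $V\times_S \oline{X_1}$, the composite $t_{Z(s_0)} \circ f^*$ is identified with $t_{Z(N(s_0))}$, where $N(s_0)$ is the norm section of $\det \oline{f_2}_* L$ on $V\times_S \oline{X_2}$ and one uses that $Z(s_0)$ maps isomorphically onto $Z(N(s_0))$; the auxiliary section $t_1$ then supplies the $\A$-homotopy on the $X_2$ side. Property (3) is handled by the genuinely easier direction: $f_1^*(s_0)$ on $f^{-1}(V)\times_S \oline{X_1}$ together with the auxiliary section $u_1$. Neither of the homotopies (9a)--(9c) or (11a)--(11c) from Proposition~\ref{A1 local corr} that you invoke produces these identities when $f$ is not an open immersion, because nothing in your construction compares correspondences on $X_1$ with correspondences on $X_2$. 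The norm/determinant-of-pushforward construction is the essential new ingredient of Proposition~\ref{Nis corr} relative to Proposition~\ref{A1 local corr}, and your proposal does not supply it.
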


We begin with a lemma which realizes the situation in Theorem $\ref{general Nis invariance}$ as a morphism of relative curves.

\begin{lemma} \label{general Nis geom presentation} Let $k$ be an infinite field.
Let $f : X_1 \to X_2$ be an \'{e}tale morphism in $\Sm / k$ with $\dim (X_1) = \dim (X_2) = n$, and let $Z_2 \into X_2$ be a smooth divisor such that $Z_1 := f^{-1}Z_2 \to Z_2$ is an isomorphism.  Let $z \in Z_2$.  Then, possibly after shrinking $X_2$ about $z$, there exist:

\begin{enumerate}
\item a smooth $(n-1)$-dimensional variety $S$;
\item open immersions $X_i \subset \overline{X_i}$ for $i=1,2$;
\item morphisms $\overline{p_i} : \overline{X_i} \to S$ for $i=1,2$; and
\item an $S$-morphism $\overline{f} : \overline{X_1} \to \overline{X_2}$;
\end{enumerate}
satisfying the following properties:
\begin{enumerate}
\item $\overline{f}$ is finite and extends $f$;
\item the $\oline{X_i}$ are normal, $S$-projective curves;
\item set $X_{\infty,i} := \oline{X_i} - X_i$; then $Z_i \cup X_{\infty,i}$ has an $S$-affine neighborhood in $\oline{X_i}$ for $i=1,2$;
\item $Z_i \cap X_{\infty,i} = \emptyset$; and
\item $p_2 := \oline{p_2} |_{X_2}$ is smooth along ${p_2}^{-1}(p_2(z))$.
\end{enumerate}
\end{lemma}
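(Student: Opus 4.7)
The strategy is to produce a smooth relative curve structure $p_2 : X_2 \to S$ compatible with $Z_2$, then compactify both $X_2$ and $X_1$ over $S$, and finally arrange the $S$-affinity condition~(3) by a section-avoidance argument after shrinking $S$.

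First I would shrink $X_2$ around $z$ to be affine, and apply Quillen's trick to the smooth divisor $Z_2$ (as in the proof of Theorem~\ref{semilocal efface}): this produces, after a further shrink, a morphism $p_2 : X_2 \to S$ onto a smooth affine $(n-1)$-dimensional $k$-variety $S$ such that $p_2$ is smooth of relative dimension $1$, is smooth along $p_2^{-1}(p_2(z))$, and makes $Z_2 \to S$ finite. The composition $p_1 := p_2 \circ f : X_1 \to S$ is then smooth of relative dimension $1$ (smooth composed with \'etale), and $Z_1 \to S$ is finite. This yields condition~(5) directly.

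Next I compactify. Choose an $S$-embedding $X_2 \hookrightarrow \mathbb{P}^N_S$, take the scheme-theoretic closure in $\mathbb{P}^N_S$, and normalize, to obtain a proper normal $S$-curve $\oline{X_2} \supset X_2$ (the open immersion is automatic since $X_2$ is smooth, hence normal). Define $\oline{X_1}$ as the normalization of $\oline{X_2}$ in the function field $K(X_1)$; then $\oline{f} : \oline{X_1} \to \oline{X_2}$ is finite by construction. Using that $X_1$ is normal and $f$ is quasi-finite, the canonical factorization $X_1 \hookrightarrow X_1' \to X_2$ of $f$ through the finite integral closure $X_1'$ of $X_2$ in $K(X_1)$ realizes $X_1$ as an open subscheme of $X_1' = \oline{f}^{-1}(X_2) \subset \oline{X_1}$, so that $\oline{f}|_{X_1} = f$. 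This gives (1) and (2).

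Finally, for (4) I shrink $S$ to exclude the (closed) images of $Z_i \cap X_{\infty,i}$, which miss $p_2(z)$ because $z \in Z_2$ lies in $X_2$ and is therefore disjoint from $X_{\infty,2}$; by finiteness of $Z_i \to S$ and $X_{\infty,i} \to S$ this removes only a proper closed subset of $S$. For (3), on each projective $S$-curve $\oline{X_i}$ the subscheme $Z_i \cup X_{\infty,i}$ is finite over the affine $S$, so a global section of a sufficiently positive relatively very ample line bundle that does not vanish on $Z_i \cup X_{\infty,i}$ produces the desired $S$-affine open complement. The main obstacle will be this last step: constructing the required non-vanishing section on the relative projective curve, which amounts to surjectivity of the restriction map $H^0(\oline{X_i}, L^{\otimes m}) \to H^0(Z_i \cup X_{\infty,i}, L^{\otimes m}|_{-})$ for $m \gg 0$ combined with avoidance of the vanishing locus; for $S$ affine Noetherian this is a standard consequence of ampleness, but some care is needed to ensure the shrinkings of $S$ used in (4) and (3) remain compatible and still contain $p_2(z)$. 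A secondary subtlety already addressed above is verifying that the open immersion $X_1 \hookrightarrow \oline{X_1}$ with $\oline{f}|_{X_1} = f$ really exists, since $f$ is only \'etale and not finite.
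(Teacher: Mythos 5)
Your overall architecture (Quillen's trick to get a relative curve, then compactify and arrange the $S$-affine neighborhood) matches the paper's, but your realization of the compactification and of property~(3) differs in an important way, and the latter has a genuine gap.

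The paper does not discard the finite surjective morphism $\pi : X_2 \to \mathbb{A}^n$ produced by Quillen's trick; it uses $\pi$ directly. Viewing $\pi$ as a quasi-finite morphism to $\mathbb{P}^1 \times \mathbb{A}^{n-1}$, Zariski's Main Theorem gives an open immersion $X_2 \subset \oline{X_2}$ and a \emph{finite} morphism $\oline{\pi} : \oline{X_2} \to \mathbb{P}^1 \times \mathbb{A}^{n-1}$ extending $\pi$; one then takes $\oline{p_2} = \mathrm{pr}_2 \circ \oline{\pi}$. Because $Z_2 \subset \oline{\pi}^{-1}(0 \times \mathbb{A}^{n-1})$ and $X_{\infty,2} \subset \oline{\pi}^{-1}(\infty \times \mathbb{A}^{n-1})$, the open set $\oline{X_2} - \oline{\pi}^{-1}(t \times \mathbb{A}^{n-1})$ (for $t \neq 0,\infty$) is an $S$-affine neighborhood of $Z_2 \cup X_{\infty,2}$ essentially for free: it is the preimage of an affine under a finite morphism. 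Property~(3) is thus immediate, and $\oline{X_1}$ is compactified by a second application of ZMT to $X_1 \to X_2 \subset \oline{X_2}$.

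Your route instead retains only the relative-curve structure $p_2 = p \circ \pi$, compactifies by a generic $S$-projective embedding plus normalization, and tries to manufacture the $S$-affine neighborhood from a non-vanishing section of a relatively ample $L^{\otimes m}$. This is where the gap is: the surjectivity of $H^0(\oline{X_i}, L^{\otimes m}) \to H^0(Z_i \cup X_{\infty,i}, L^{\otimes m}|_{-})$ for $m \gg 0$, which you cite, is not by itself enough. You still need a nowhere-vanishing section of $L^{\otimes m}|_{Z_i \cup X_{\infty,i}}$ to lift, and such a section exists if and only if this restriction is a trivial line bundle. Over a general affine base $S$ of dimension $n-1 \geq 1$ the scheme $Z_i \cup X_{\infty,i}$ is affine of dimension $n-1$ and need not have trivial Picard group, so the required section may not exist. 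The fix is to shrink $S$ around $p_2(z)$: over the local ring $\mathcal{O}_{S,p_2(z)}$ the fiber of $Z_i \cup X_{\infty,i}$ is semilocal, every line bundle on it is trivial, and one spreads out a trivialization to an open neighborhood; this shrink must also be compatible with the one used for~(4). You flag ``some care is needed'' but do not identify this Picard-group obstruction, which is the actual issue, and which is precisely what the paper's use of the finite $\oline{\pi}$ to $\mathbb{P}^1 \times \mathbb{A}^{n-1}$ renders moot. (A minor secondary inaccuracy: Quillen's trick gives $p_2$ smooth only along the fiber through $z$, not smooth of relative dimension $1$ globally; this does not affect the argument since only~(5) requires it.)
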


\begin{proof}  This is an adaptation of Quillen's trick \cite[Lemma 5.12]{Q}.  We let $p : \bb{A}^n \to \bb{A}^{n-1}$ denote projection away from the first factor and $i_0 : \bb{A}^{n-1} \into \bb{A}^n$ the inclusion $(0, \text{id}) : \bb{A}^{n-1} \to \bb{A}^1 \times \bb{A}^{n-1} \cong \bb{A}^n$.  Possibly after replacing $X_2$ by a neighborhood of $z$, there exists a finite surjective morphism $\pi : X_2 \to \bb{A}^n$ with the following properties.

\begin{enumerate}
\item The morphism $\pi$ is \'{e}tale along $\pi^{-1} (\pi(z))$ (hence $p \circ \pi$ is smooth at $z$).
\item The morphism $p \circ \pi |_{Z_2} : Z_2 \to \bb{A}^{n-1}$ is finite and (by the previous property) \'{e}tale at $z$. 
\item In the cartesian diagram
$$\xym{
{(X_2)}_0 \ar[r] \ar[d] & X_2 \ar[d] \\
\bb{A}^{n-1} \ar[r]^{i_0} & \bb{A}^n \\ }$$
we have ${(X_2)}_0 = Z_2 \cup R_2$ with $z \not \in R_2$.  (The \'etaleness guarantees $Z_2$ is the unique branch through $z$.)  In particular $pr_1(Z_2) = 0 \in \bb{A}^1$.
\end{enumerate}
The morphism $X_2 \xrightarrow{\pi} \bb{A}^n \cong \bb{A}^1 \times \bb{A}^{n-1} \subset \bb{P}^1 \times \bb{A}^{n-1}$ is quasi-finite, hence by Zariski's Main Theorem we can find an open immersion $X_2 \subset \overline{X_2}$ and a finite morphism $\overline{\pi} : \overline{X_2} \to \bb{P}^1 \times \bb{A}^{n-1}$ which extends $\pi$.  We may replace an arbitrary $\oline{X_2}$ with its normalization.  The morphism ${pr}_2 \circ \overline{\pi} : \overline{X_2} \to \bb{A}^{n-1}$ is our candidate for $\oline{p_2} : \oline{X_2} \to S$.  The morphism $\oline{\pi}$ is finite, hence projective; and ${pr}_2 : \bb{P}^1 \times \bb{A}^{n-1}  \to \bb{A}^{n-1}$ is projective, hence $\oline{p_2}$ is projective.  

We have a closed immersion $X_{\infty,2} \into \overline{X_2}$, hence the induced morphism $X_{\infty,2} \to \bb{A}^{n-1}$ is proper.  Since also $X_{\infty,2} \subset {\overline{\pi}}^{-1}(\infty \times \bb{A}^{n-1})$, the morphism $X_{\infty,2} \to \bb{A}^{n-1}$ is quasi-finite.  Hence it is finite.

We claim $Z_2 \cup X_{\infty,2}$ admits an open neighborhood in $\overline{X_2}$ which is affine over $\bb{A}^{n-1}$.  We have $pr_1(Z_2) = 0 \in \bb{A}^1$ and $Z_2$ is irreducible, hence $pr_1(Z_2) = 0 \in \bb{A}^1$.  Then for any $t \in \bb{P}^1$ different from $0$ and $\infty$, the variety $\overline{X_2} - {\overline{\pi}}^{-1}(t \times \bb{A}^{n-1})$ is an affine open neighborhood of $Z_2 \cup X_{\infty,2}$.  Also $Z_2 \cap X_{\infty,2} = \emptyset$; this even holds after applying $pr_1$.

We claim $X_1$ can be compatibly compactified.  By applying Zariski's Main Theorem to the morphism $X_1 \to X_2 \subset \overline{X_2}$, we obtain an open immersion $X_1 \subset \overline{X_1}$ (with $\oline{X_1}$ normal) and a finite morphism $\overline{f} : \overline{X_1} \to \overline{X_2}$.  Therefore $\overline{X_1}$ is a projective curve over $S=\bb{A}^{n-1}$.  By the same arguments as above we conclude the subvariety $X_{\infty,1}$ is finite over $\bb{A}^{n-1}$, and $Z_1 \cup X_{\infty,1}$ has an open neighborhood in $\overline{X_1}$ which is affine over $\bb{A}^{n-1}$.  We have also ${\overline{f}}^{-1}(X_{\infty,2}) \subset X_{\infty,1}$, hence $\oline{f}$ restricts to a morphism $X_1 \to X_2$ which is nothing but our original $f$.
\end{proof}

A finite surjective morphism $f : X \to Y$ of normal varieties is flat away from a set of codimension at least 2.  Hence in this situation for any line bundle $L$ on $X$ we have the line bundle $\det f_* L$ on $Y$.  To a section $s$ of $L$ there is associated a section $N(s)$ of $\det f_* L$; if $s$ cuts out the Cartier divisor $D$ on $X$, then $N(s)$ cuts out $f_*D$ on $Y$.  If $M$ is a line bundle on $Y$, then $\det f_* f^*M \cong M^{\otimes \deg(f)}$. 

We apply these constructions to $V \times_S \oline{X_1} \to V \times_S \oline{X_2}$, which is the base change of $\oline{X_1} \to \oline{X_2}$ via the smooth morphism $V \times_S \oline{X_2} \to \oline{X_2}$.  This is permissible since the nonflat locus of $V \times_S \oline{X_1} \to V \times_S \oline{X_2}$ still has codimension at least 2.

To simplify the notations in the next proposition we use $\oline{f_2} := 1 \times \oline{f}$ and $f_1 := f  \times 1$.  We use the same general structure as in Proposition $\ref{A1 local corr}$.

\begin{proposition} \label{Nis corr} Let $k$ be a perfect infinite field.  Let the hypotheses and notation be as in Theorem $\ref{general Nis invariance}$, and suppose a compactification as in Lemma $\ref{general Nis geom presentation}$ has been chosen.  Let $d := \deg(\oline{f})$.  Then there exist:
\begin{enumerate}
\item an open neighborhood $V$ of $z_2$;

\item a line bundle $L$ on $V \times_S \oline{X_1}$;

\item sections $s_0, s_1 \in H^0(V \times_S \oline{X_1}, \oline{f_2}^* \cal{O}(\Delta_V) \otimes L )$;
\item a section $t_1 \in H^0 (V \times_S \oline{X_2}, \cal{O}((d-1) \Delta_V) \otimes \det \oline{f_2}_* L )$; and
\item a section $u_1 \in H^0 (f^{-1}(V) \times_S \oline{X_1}, \cal{O}(-\Delta_{f^{-1}(V)}) \otimes {f_1}^* (\oline{f_2}^* \cal{O}(\Delta_V) \otimes L)   )$;
\end{enumerate}

satisfying:

\begin{enumerate}
\item $s_0$ and $s_1$ agree on $V \times_S X_{\infty,1}$;

\item $s_1$ generates along $V \times_S ( X_{\infty,1} \coprod Z_1)$;

\item on $V \times_S  {(\oline{f})}^{-1} (X_{\infty,2}) \subset V \times_S X_{\infty,1}$, both $s_0$ and $s_1$ are given by $\oline{f_2}^*(s_\Delta) \cdot \ell$, where $\ell$ is a generating section of $L |_{V \times_S  {(\oline{f})}^{-1} (X_{\infty,2})}$;

\item via the isomorphism $V \times_S Z_1 \to V \times_S Z_2$, $s_0$ is given by $\oline{f_2}^* (s_\Delta) \cdot g$, where $g$ is a generating section of $L |_{V \times_S Z_1}$;

\item via $\oline{f_2} : V \times_S \oline{X_1} \to V \times_S \oline{X_2}$, the zero scheme $Z(s_0)$ maps isomorphically onto its image $Z(N(s_0))$;

\item $N(s_0)$ and $s_{\Delta_V} \cdot t_1$ agree on $V \times_S (X_{\infty,2} \coprod Z_2)$, and $t_1$ generates along $V \times_S (X_{\infty,2} \coprod Z_2)$;
\item ${f_1}^*(s_0)$ and $s_{\Delta_{f^{-1}(V)} } \cdot u_1$ agree on $f^{-1}(V) \times (X_{\infty,1} \coprod Z_1)$, and $u_1$ generates along $f^{-1}(V) \times_S (X_{\infty,1} \coprod Z_1)$;

\item the zero schemes $Z(s_0), Z(s_1), Z(t_1), Z(u_1)$ are $k$-smooth;

\item the zero schemes:
\begin{enumerate}
\item $Z(t \cdot s_0 + (1-t) \cdot s_1) \into \A_t \times (V - V \cap Z_2) \times_S (X_1 - Z_1)$,
\item $Z(t \cdot N(s_0) + (1-t) \cdot s_{\Delta_{V - V \cap Z_2}} \cdot t_1 )  \into \A_t \times (V - V \cap Z_2) \times_S (X_2 - Z_2)$, and
\item $Z(t \cdot f_1^* s_0 + (1-t) \cdot s_{\Delta_{f^{-1}(V) - f^{-1}(V \cap Z_2)}} \cdot u_1  )  \into \A_t \times (f^{-1}(V) - f^{-1}(V \cap Z_2)) \times_S (X_1 - Z_1)$
\end{enumerate}
are $k$-smooth; 
\item the zero schemes $Z(u_1), Z(t_1)$ are disjoint from the relevant diagonals:
\begin{enumerate}
\item ${\Delta_{V - V \cap Z_2}} \cap  Z(t_1) = \emptyset$ in $(V - V \cap Z_2) \times_S (X_2 - Z_2)$, and
\item ${\Delta_{f^{-1}(V) - f^{-1}(V \cap Z_2)}} \cap Z(u_1) = \emptyset $ in $(f^{-1}(V) - f^{-1}(V \cap Z_2)) \times_S (X_1 - Z_1)$; and
\end{enumerate}

\item via the first projections, the zero schemes are finite and admit $N$-trivial embeddings.
\end{enumerate}
\end{proposition}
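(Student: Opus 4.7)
The plan is to adapt the construction of Proposition \ref{A1 local corr} to the geometric setup of Lemma \ref{general Nis geom presentation}, with $\oline{X_1}$ playing the role of $\A_S$, $\oline{X_2}$ the role of $\bb{P}^1_S$, $Z_i$ the role of $0_S$, and $X_{\infty,i}$ the role of the point at infinity. The new conceptual ingredients are the norm section $N(s_0)$, which realizes the pushforward of $Z(s_0)$ along $\oline{f_2}$, together with its factorization witness $t_1$ on the $\oline{X_2}$ side and the pullback factorization witness $u_1$ on the $\oline{X_1}$ side over $f^{-1}(V)$.

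First I shrink $V$ to an affine open neighborhood of $z$ small enough that $\oline{p_2}$ is smooth on $\oline{p_2}^{-1}(p_2(V))$, the pair $(V, V \cap Z_2)$ is a smooth divisor pair, $Z_i \cup X_{\infty,i}$ sits inside an $S$-affine open of $\oline{X_i}$, and the Picard groups of the restricted pieces $V \times_S Z_1$, $V \times_S Z_2$, $V \times_S X_{\infty,i}$ are trivial. I then choose $L = \cal{O}(NF)$ for a suitable horizontal divisor $F$ on $V \times_S \oline{X_1}$, with $N$ large enough that for each closed subscheme $W \subset V \times_S \oline{X_1}$ appearing in the argument the first cohomology of $\cal{I}_W \otimes \oline{f_2}^\ast \cal{O}(\Delta_V) \otimes L$ vanishes; since $V$ is affine this reduces to fiberwise positivity of sufficiently high degree. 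Analogous bundles are chosen on $V \times_S \oline{X_2}$ and $f^{-1}(V) \times_S \oline{X_1}$.

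I then construct $s_0$ by prescribing its values on the disjoint union of divisors $V \times_S (\oline{f}^{-1}(X_{\infty,2}) \coprod Z_1)$: set $s_0 = \oline{f_2}^\ast(s_\Delta) \cdot \ell$ on $V \times_S \oline{f}^{-1}(X_{\infty,2})$ and $s_0 = \oline{f_2}^\ast(s_\Delta) \cdot g$ on $V \times_S Z_1$, with $\ell, g$ generators of $L$ on those divisors. Cohomology vanishing lifts this to a global section, and the Bertini argument of Remark \ref{first bertini} --- which requires the characteristic zero hypothesis in force throughout Section \ref{smooth corr} --- yields $Z(s_0)$ smooth away from a controllable base locus. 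Property (5) is an additional generic condition: for a generic lift $\oline{f_2}|_{Z(s_0)}$ is a closed immersion, so $Z(s_0) \to Z(N(s_0))$ is an isomorphism onto a reduced subscheme. I construct $s_1$ as a second element of the same system with $s_1 = s_0$ on $V \times_S X_{\infty,1}$ and $s_1$ a generic generator on $V \times_S Z_1$, so by Bertini $Z(s_1) \subset V \times_S (X_1 - Z_1)$ is smooth and transverse to everything in sight. Next I define $t_1$ by prescribing its values on $V \times_S (X_{\infty,2} \coprod Z_2)$ so that $s_\Delta \cdot t_1 = N(s_0)$ there. The factorization makes sense because after having shrunk $V$ to avoid the closed locus where $Z(s_0)$ meets sheets of $\oline{f}^{-1}(Z_2)$ other than $Z_1$, the only component of $Z(N(s_0))$ meeting $V \times_S Z_2$ is $\Delta_V$, with multiplicity one; similarly on $V \times_S X_{\infty,2}$. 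Cohomology vanishing lifts $t_1$, with generic choice ensuring $Z(t_1)$ smooth and disjoint from $\Delta_{V - V \cap Z_2}$. The section $u_1$ is built by the parallel procedure, the prescription $f_1^\ast s_0 = s_{\Delta_{f^{-1}(V)}} \cdot u_1$ on $f^{-1}(V) \times_S (X_{\infty,1} \coprod Z_1)$ being consistent because $f$ is \'etale and $Z_1 \to Z_2$ is an isomorphism, so $f_1^\ast s_0$ vanishes to exact order one along $\Delta_{f^{-1}(V) \cap Z_1}$.

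Smoothness of the three homotopy total spaces in (9) follows by forming the pencil between the two endpoint sections and applying Bertini, then removing from the first factor the closed image of the remaining base-locus incidences, exactly as executed for Proposition \ref{A1 local corr}. The finiteness in (11) follows from $S$-projectivity of $\oline{X_i}$ together with disjointness of each zero scheme from $V \times_S X_{\infty,i}$ guaranteed by our boundary prescriptions, and the $N$-trivial embeddings are produced by closing the affine open ambient spaces into $V \times \bb{A}^M$ and trivializing the conormal bundle of the smooth Cartier divisor $Z(s)$ by the differential $ds$, then reducing via Lemmas \ref{N trivial smooth BC} and \ref{N trivial no kahler}. The principal obstacle is the simultaneous coordination of the Bertini-type generic conditions --- smoothness of $Z(s_0)$, injectivity of $\oline{f_2}|_{Z(s_0)}$, smoothness of the three homotopy total spaces, nonvanishing of $t_1, u_1$ along the relevant diagonals, and correct boundary behavior on $V \times_S (Z_i \coprod X_{\infty,i})$ --- within a single finite sequence of shrinkages of $V$; each condition is non-empty and open, but they must be chained carefully through the successive cohomology-lifting steps.
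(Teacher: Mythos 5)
There is a genuine gap concerning the base field. You invoke the Bertini argument of Remark \ref{first bertini}, noting it ``requires the characteristic zero hypothesis in force throughout Section \ref{smooth corr}.'' But Proposition \ref{Nis corr} and Theorem \ref{general Nis invariance} are stated for $k$ a perfect infinite field of \emph{arbitrary} characteristic --- this is deliberate, since Section \ref{Nis section} (Nisnevich excision) must hold in greater generality than the Mayer--Vietoris statements of Section \ref{smooth corr}. Your approach establishes a weaker statement, not the proposition as it is written.

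The paper sidesteps characteristic-zero Bertini by a separability argument concentrated at the fiber curve over the single closed point $z$. After shrinking $V$ about $z$, smoothness of a zero scheme on $V \times_S \oline{X_1}$ follows once the corresponding divisor on the curve $\oline{C}_z := z \times_S \oline{X_1}$ is reduced. For the pencil $\lambda s_0^{init} + \mu e_1$, the paper shows the total space restricted to $\oline{C}_z$ maps to $\PP^1$ with smooth generic fiber: the component $(z\times_S z)\times\PP^1 \to \PP^1$ is separable because $k\to\kappa(z)$ is separable ($k$ perfect), and the graph component $\oline{C}_z \to \PP^1$ is separable by the side condition that $\ell$ is \emph{not a $p$-th power} of a section (of a $p$-th root of $L$). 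This is exactly the replacement for generic smoothness of the Bertini pencil, and it requires only perfect infinite $k$. Your proposal contains no such separability input, so the smoothness claims in items (8) and (9) are unsupported once $\charct k = p > 0$. A secondary, more minor deviation: for item (5) you assert that a generic lift makes $\oline{f_2}|_{Z(s_0)}$ a closed immersion, whereas the paper computes the degree of $Z(s_0)\to Z(N(s_0))$ directly along the base change $V\times_S Z_2$ and shows it is $1$; your assertion would itself need a characteristic-free justification.
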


\begin{remark}  The sections $N(s_0)$ and $s_{\Delta_V} \cdot t_1$  are compared via the isomorphism:

$$ \det  \oline{f_2}_* (  ( \oline{f_2}^*\cal{O}(\Delta_V)) \otimes L ) \cong \cal{O}(\Delta_V) \otimes \cal{O}( (d-1) \Delta_V ) \otimes \det  \oline{f_2}_* L.$$

The sections ${f_1}^*(s_0)$ and $s_{\Delta_{f^{-1}(V)} } \cdot u_1$ are compared via the isomorphism:

$$ {f_1}^* ( \oline{f_2}^*\cal{O}(\Delta_V)   \otimes L   ) \cong \cal{O}(\Delta_{f^{-1}(V)} ) \otimes \cal{O}(R) \otimes {f_1}^* L $$

where $R \into f^{-1}(V) \times \oline{X_1}$ is a smooth Cartier divisor disjoint from $\Delta_{f^{-1}(V)}$.

\end{remark}

\begin{proof} The zero schemes are automatically $V$-finite: we have $Z(s) \into V \times_S \oline{X}$ closed, and $V \times_S \oline{X} \to V$ is projective.  Hence the zero scheme is $V$-proper.
Note that since we can choose $V$ smooth over $S$, we have $V \times_S X_2$ is smooth over $X_2$, hence also $V \times_S X_2$ is $k$-smooth.

The choice of $L$ in (2) is dictated by the following criteria.  The first is that for any of the line bundles $M$ appearing in (3)-(5) (all of these involve $L$ and some twists) and any of the closed subschemes $Z$ among $V \times_S (X_{\infty1,} \coprod Z_1), V \times_S (X_{\infty,2} \coprod Z_2), f^{-1}(V) \times_S (X_{\infty,1} \coprod Z_1)$, it holds that $R^1 \oline{p}^* (\cal{I}_Z \otimes M) = 0$.  Replacing $S$ by an affine neighborhood of $p_2(z)$, the vanishing of $R^1p_*$ is equivalent to the standard restriction sequence remaining exact after applying $H^0(-)$.  The second criterion is that for all $Z,M$ as above, the linear subsystem $H^0(\cal{I}_Z \otimes M)$ of $H^0(M)$ is base-point free outside of $Z$.  Both can be achieved by choosing $L$ sufficiently ample relative to the morphism $V \times_S \oline{X_1} \to V$.  

Since we can shrink $V$ about $z$, we can ensure the zero schemes are smooth if they intersect the curves $z \times_S \oline{X_i}$ transversely.  We will deal carefully with the sections $s_0$ and $s_1$ and the curve $\oline{C}_z := z \times_S \oline{X_1}$; the others are similar.  Note $\oline{C}_z$ contains the smooth curve $z \times_S X_1$ as a dense open subscheme.

Let $\ell$ be a global section of $L$ which generates $L$ along $V \times_S {(\oline{f})}^{-1}(X_{\infty,2})$ and is not a $p^{\text{th}}$ power of a section (of a line bundle which is a $p^{\text{th}}$ root of $L$).  Let $g$ be a global section which generates along $V \times_S Z_1$.  Since $Z_1$ is finite over $S$, $Z_1$ is equal to its closure in $\oline{X_1}$.  Since $f$ is \'etale along $Z_1$, we have ${\oline{f}}^{-1}(Z_2) = Z_1 \coprod T$, where $T \subset X_{\infty,1}$.

We specify an element of $H^0((\oline{f_2}^* \cal{O}(\Delta_V) \otimes L) |_{V \times_S (X_{\infty,1} \coprod Z_1)})$, as in the statement, as follows:
\begin{itemize}
\item on $V \times_S {(\oline{f})}^{-1}(X_{\infty,2})$ the section restricts to $s_{{\oline{f_2}}^*\Delta} \cdot \ell$;
\item on $V \times_S ({(\oline{f})}^{-1}(Z_2) - Z_1)$ it generates; and
\item on $V \times_S Z_1$ it restricts to $s_{{\oline{f_2}}^*\Delta} \cdot g$.
\end{itemize}

By our assumption on $L$, we can find a global section $s^{init}_0$ of $\oline{f_2}^* \cal{O}(\Delta_V) \otimes L$ with this behavior on $V \times_S (X_{\infty,1} \coprod Z_1)$.  Now let $e_1 \in H^0(\cal{I}_{V \times_S ( {(\oline{f})}^{-1}(X_{\infty,2}) \coprod Z_1) } \otimes {\oline{f_2}}^* \cal{O}(\Delta_V) \otimes L)$ be a section which does not vanish along $V \times_S ({(\oline{f})}^{-1}(Z_2) - Z_1)$ and satisfies $Z(e_1) \cap Z(s_0^{init}) = V \times_S Z_1$.

Now we restrict the pencil determined by $s_0^{init}$ and $e_1$ to the curve $\oline{C}_z$, i.e., we consider the zero scheme $Z(\lambda s_0^{init} + \mu e_1 ) \into \oline{C}_z \times \bb{P}^1_{\lambda,\mu}$.  This pencil has the unique base point $z \times_S z$.  Hence $Z(\lambda s_0^{init} + \mu e_1)$ consists of two irreducible components: the graph of a morphism $\oline{C}_z \to \bb{P}^1$; and the component $(z \times_S z) \times \bb{P}^1$.  Since $k$ is perfect the extension $k \to \kappa(z)$ is separable and hence the morphism $(z \times_S z) \times \bb{P}^1 \to \bb{P}^1$ is separable.  By our choice of $\ell$ we know the morphism $\oline{C}_z \to \bb{P}^1$ induces a separable field extension over the point $\mu = 0$, hence $\oline{C}_z \to \bb{P}^1$  is separable.  Therefore $Z(\lambda s_0^{init} + \mu e_1) \to \bb{P}^1$ has smooth generic fiber, in other words the general element of the pencil $Z(\lambda s_0^{init} + \mu e_1 ) \into (V \times_S \oline{X_1}) \times \bb{P}^1$ intersects $\oline{C}_z$ transversely.  (In particular it avoids the singular locus of $\oline{C}_z$.)  We choose a general element $s_0(\lambda_0, \mu_0)$ of this pencil.

Now we can find $e_2 \in H^0(\cal{I}_{V \times_S X_{\infty,1}} \otimes {\oline{f_2}}^* \cal{O}(\Delta_V) \otimes L)$ such that $e_2$ does not vanish along $V \times_S Z_1$.  Since the subsystem to which $e_2$ belongs has no unassigned base locus, we can choose $e_2$ so that $Z(s_0(\lambda_0, \mu_0)) \cap Z(e_2) \cap \oline{C}_z = \emptyset$.  Now we let $s_0 := s_0(\lambda_0, \mu_0)$, and we let $s_1$ be a general element of the pencil determined by $s_0$ and $e_2$.  Since $s_1$ is more general than $s_0$, the zero scheme of $Z(s_1)$ also intersects $\oline{C}_z$ transversely.  Since $s_0=e_2=0$ has no solution along $\oline{C}_z$, the total space $Z(t \cdot s_0 + (1-t) \cdot s_1) \into \A_t \times \oline{C}_z$ is $k$-smooth, hence (possibly after shrinking $V$) the total space $Z( t s_0 + (1-t) s_1) \into \A_t \times (V \times_S \oline{X_1})$ is $k$-smooth.  (We just need: $Z( t \cdot s_0 + (1-t) \cdot s_1) \into \A_t \times (V - V \cap Z_2) \times_S (X_1 - Z_1)$ is $k$-smooth.)

Now we claim $Z(s_0)$ maps isomorphically onto its image $Z(N(s_0))$.  We can calculate the degree of the morphism $Z(s_0) \to Z(N(s_0))$ along the base change $V \times_S Z_2 \into V \times_S \overline{X_2}$.  We have ${\overline{f}}^{-1}(V \times_S Z_2) \subset (V \times_S (Z_1 \cup X_{\infty,1}))$ scheme-theoretically along $Z_1$; along $X_{\infty,1}$ there might be multiplicities.  Also $Z(s_0) \cap (V \times_S (Z_1 \cup X_{\infty,1})) = Z_2 \times_S Z_1$ scheme-theoretically, and $ Z_2 \times_S Z_1$ is degree 1 (indeed, an isomorphism) onto its image $Z_2 \times_S Z_2$.  Therefore $Z(s_0)$ itself maps with degree 1, i.e., isomorphically, onto $Z(N(s_0))$.  

Very similar arguments give the $k$-smoothness of $Z(t \cdot N(s_0) + (1-t) \cdot s_{\Delta_{V - V \cap Z_2}} \cdot t_1)$.  

After shrinking $V$ about $z$ if necessary we have $Z(N(s_0)) \cap \Delta_{V - V \cap Z_2} = \emptyset$.  Then an arbitrary $t_1$ with the prescribed behavior along $V \times (X_{\infty,2} \coprod Z_2)$ may not work, but we are free to add to any given $t_1$ a global section of $\cal{I}_{V \times_S (X_{\infty,2} \coprod Z_2) } \otimes \cal{O}((d-1)\Delta_V) \otimes \det \oline{f_2}_* L$.  Since this subsystem has no unassigned base locus, we can make $Z(t_1)$ disjoint from $Z(N(s_0))$ and $\Delta_V$ in the fiber curve $z \times_S \oline{X_2}$, hence this holds in a neighborhood of $z$ as well.

The situation with $f_1^*(s_0)$ and $u_1$ is slightly easier since we pull-back along an \'etale morphism rather than push-forward along a finite generically \'etale morphism.  The ideas are the same.

In all cases, we conclude the existence of $N$-trivial embeddings by shrinking $V$ so that $V$ and the smooth schemes finite over it have trivial sheaf of K\"ahler differentials.
\end{proof}

\begin{proof}[Proof of Theorem $\ref{general Nis invariance}$.]  Because the desired properties of $\psi$ are stable under precomposition with the restriction $\cal{F}(X_1 - Z_1) \to \cal{F}(f^{-1}(U) - f^{-1}(U) \cap Z_1 )$, we may replace $X_2$ by an open neighborhood $U$ of the point $z$, and $X_1$ by the preimage $f^{-1}U$.  We put the excisive morphism $f$ into the convenient relative form guaranteed by Lemma $\ref{general Nis geom presentation}$.  Now we define $\psi : \cal{F}(X_1 - Z_1) \to \cal{F}(V - V \cap Z_2)$ as the difference $\psi := t_{Z(s_0)} - t_{Z(s_1)}$, where $Z(s_0)$ and $Z(s_1)$ are the smooth correspondences constructed in Proposition $\ref{Nis corr}$, and $t_-$ is defined in Notation $\ref{transfer mor}$.

\textit{Property (1).} The correspondences $Z(s_0), Z(s_1) : (V - V \cap Z_2) \to (X_1 - Z_1) \subset X_1$ are $\bb{A}^1$-homotopic, i.e., they become homotopic when allowed to pass through $Z_1$.  To see this, we use the zero scheme $Z(t \cdot s_0 + (1-t) \cdot s_1 ) \into \A_t \times (V - V \cap Z_2) \times_S X_1$.  This is $k$-smooth by the properties in $\ref{Nis corr}$, and $Z(t \cdot s_0 + (1-t) \cdot s_1 ) \to \A_t \times (V - V \cap Z_2) $ admits an $N$-trivial embedding if $V$ is sufficiently small.

\textit{Property (2).} Considering where $s_1$ was required to generate, we see $Z(s_1) \into (V - V \cap Z_2) \times_S (X_1 - Z_1)$ extends through $V \cap Z_2$, i.e., to a smooth $V$-finite correspondence in $V \times_S (X_1 - Z_1)$.  In geometric terms we have a factorization

$$\xym{
V - V \cap Z_2 \ar[d] \ar[r]^-{Z(s_1)} & X_1 - Z_1 \\
V \ar[ru]_{{Z(s_1)}^-} \\}$$
where the arrow $V \to X_1 - Z_1$ is the (smooth) finite correspondence obtained by taking the closure of $Z(s_1)$.  Since the weak transfer along $Z(s_1)^-$ is compatible with the open immersion $V - V \cap Z_2 \subset V$, we conclude $\oline{t_{Z(s_1)}} : \cal{F}(X_1 - Z_1) \to \cal{F}(V - V \cap Z_2) / \cal{F}(V)$ is zero.  Hence we have

\begin{equation}  \label{prop2 eqn1} \overline{\psi} = \overline{t_{Z(s_0)}} : \cal{F}(X_1 - Z_1) \to \cal{F}(V - V \cap Z_2) / \cal{F}(V).  \end{equation}

Now the commutative diagram:
$$\xym{
Z(s_0) |_{V - V \cap Z_2} \ar[r] \ar[d]^-\cong & (V-V \cap Z_2) \times_S (X_1 - Z_1) \ar[r]^-{pr_2} \ar[d]^-{1 \times f} & X_1 - Z_1 \ar[d]^f \\
Z(N(s_0)) |_{V - V \cap Z_2} \ar[r] & (V-V \cap Z_2) \times_S (X_2 - Z_2) \ar[r]^-{pr_2}  & X_2 - Z_2 \\ }$$
shows that 

\begin{equation} \label{prop2 eqn2} t_{Z(s_0)} \circ f^* = t_{Z(N(s_0))} : \cal{F}(X_2 - Z_2) \to \cal{F}(V - V \cap Z_2) . \end{equation}

The homotopy $Z(t \cdot N(s_0) + (1-t) \cdot s_{\Delta_{V - V \cap Z_2}} \cdot t_1) \into \A_t \times ( V- V \cap Z_2) \times_S (X_2 - Z_2)$ shows that $t_{Z(N(s_0))} = t_{\Delta_{V- V \cap Z_2}} + t_{Z(t_1)} : \cal{F}(X_2 - Z_2) \to \cal{F}(V - V \cap Z_2)$.  Furthermore $Z(t_1)$ extends to a correspondence $V \to X_2 - Z_2$, so the map $\oline{t_{Z(t_1)}} : \cal{F}(X_2 - Z_2) \to \cal{F}(V - V \cap Z_2) / \cal{F}(V)$ is zero.

Therefore we have:
\begin{equation} \label{prop2 eqn3} \oline{t_{Z(N(s_0))}} = \oline{t_{\Delta_{V- V \cap Z_2}}} : \cal{F}(X_2 - Z_2) \to \cal{F}(V - V \cap Z_2) / \cal{F}(V). \end{equation}

Putting everything together, we conclude:

$$\oline{\psi} \circ f^* \stackrel{\ref{prop2 eqn1}}{=} \oline{t_{Z(s_0)}} \circ f^* \stackrel{\ref{prop2 eqn2}}{=} \oline{t_{Z(N(s_0))}} \stackrel{\ref{prop2 eqn3}}{=} \oline{t_{\Delta_{V - V \cap Z_2}}} :  \cal{F}(X_2 - Z_2) \to \cal{F}(V - V \cap Z_2) / \cal{F}(V)$$

as desired.

\textit{Property (3).} Our goal is to show $\oline{f^*} \circ \oline{\psi} : \cal{F}(X_1 - Z_1) \to \cal{F}(f^{-1}(V) - f^{-1}(V \cap Z_2)) / \cal{F}(f^{-1}(V))$ is the restriction map.  Since $\oline{\psi} = \oline{t_{Z(s_0)}}$ and the transfer maps are compatible with the \'{e}tale base change $f : f^{-1}(V) - f^{-1}(V \cap Z_2) \to V - V \cap Z_2$ (in particular $N$-triviality is preserved by \'etale base change), it suffices to show $\oline{t_{Z({f_1}^*(s_0))} } = \oline{ t_{\Delta_{f^{-1}(V) - f^{-1}(V \cap Z_2)}}}$. 

The argument is the same: the zero scheme of
$$t \cdot  {f_1}^*(s_0) + (1-t) \cdot s_{\Delta_{f^{-1}(V) - f^{-1}(V \cap Z_2)}} \cdot u_1$$
in $\A_t \times ({f^{-1}(V) - f^{-1}(V \cap Z_2)}) \times_S \oline{X_1}$ gives the relation
$$t_{Z({f_1}^*(s_0))} = t_{\Delta_{f^{-1}(V) - f^{-1}(V \cap Z_2)}} + t_{Z(u_1)} :  \cal{F}(X_1 - Z_1) \to \cal{F}({f^{-1}(V) - f^{-1}(V \cap Z_2)}).$$

Furthermore $Z(u_1)$ is closed in $f^{-1}(V) \times_S (X_1 - Z_1)$, i.e., extends through $f^{-1}(V \cap Z_2)$.  Therefore $\oline{  t_{Z(u_1)} } : \cal{F}(X_1 - Z_1) \to \cal{F}({f^{-1}(V) - f^{-1}(V \cap Z_2)}) / \cal{F}(f^{-1}(V))$ is zero and $\oline{t_{Z({f_1}^*(s_0))} } = \oline{ t_{\Delta_{f^{-1}(V) - f^{-1}(V \cap Z_2)}}}$, as desired.

\end{proof}

As a result of Theorem $\ref{general Nis invariance}$ we have the statement of \cite[Cor.~4.13]{CTPST} for $\cal{F}$ as in the theorem.

\begin{corollary} \label{V 4.13} Let $k$ be a perfect infinite field, and let $\cal{F}$ be a homotopy invariant presheaf on $\Sm /k$ with oriented weak transfers for affine varieties.  Let $f : X_1 \to X_2$ be an \'etale morphism of smooth $k$-schemes and $Z \into X_2$ a smooth divisor such that $f^{-1}(Z) \to Z$ is an isomorphism.  Then the canonical morphism of sheaves
$$\cal{F}_{(X_2, Z)} \to \cal{F}_{(X_1, f^{-1}(Z))}$$
on $Z_{Zar}$ is an isomorphism. \end{corollary}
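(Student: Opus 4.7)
The plan is to check the claim stalkwise at every $z \in Z$. Since $f^{-1}(Z) \to Z$ is an isomorphism, there is a unique preimage $z_1 \in f^{-1}(Z)$, and the open neighborhoods of $z$ of the form $f^{-1}(U)$, with $U$ an open neighborhood of $z$ in $X_2$, are cofinal in the system of all neighborhoods of $z_1$ in $X_1$. Accordingly, the stalks are
\[
\cal{F}_{(X_2,Z),z} = \varinjlim_{U \ni z} \cal{F}(U - U \cap Z)/\cal{F}(U),
\qquad
\cal{F}_{(X_1,f^{-1}(Z)),z} = \varinjlim_{U \ni z} \cal{F}(f^{-1}(U) - f^{-1}(U) \cap f^{-1}(Z))/\cal{F}(f^{-1}(U)),
\]
and the morphism induced by $f$ on stalks is the colimit of the maps $\overline{f^*}$ on the quotients. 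It thus suffices to exhibit a two-sided inverse at the stalk level.

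For each such $U$, apply Theorem \ref{general Nis invariance} to the \'etale morphism $f \colon f^{-1}(U) \to U$ with smooth divisor $U \cap Z$ and the point $z$. This produces an open neighborhood $V \subset U$ of $z$ and a morphism $\psi \colon \cal{F}(f^{-1}(U) - f^{-1}(U) \cap f^{-1}(Z)) \to \cal{F}(V - V \cap Z)$ which, by property (1), descends to $\overline{\psi} \colon \cal{F}(f^{-1}(U) - f^{-1}(U) \cap f^{-1}(Z))/\cal{F}(f^{-1}(U)) \to \cal{F}(V - V \cap Z)/\cal{F}(V)$. Since $V$ is a neighborhood of $z$, the target maps into $\cal{F}_{(X_2,Z),z}$; passing to the colimit over $U$ yields the candidate inverse $\overline{\psi}_z \colon \cal{F}_{(X_1,f^{-1}(Z)),z} \to \cal{F}_{(X_2,Z),z}$.

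To see $\overline{f^*}_z \circ \overline{\psi}_z$ is the identity, take a class $\alpha \in \cal{F}_{(X_1,f^{-1}(Z)),z}$ and represent it by $\alpha_{U_1} \in \cal{F}(f^{-1}(U) - f^{-1}(U) \cap f^{-1}(Z))/\cal{F}(f^{-1}(U))$ after shrinking so that $U_1 = f^{-1}(U)$; property (3) of Theorem \ref{general Nis invariance} says $\overline{f^*}(\overline{\psi}(\alpha_{U_1}))$ equals the restriction of $\alpha_{U_1}$ to $f^{-1}(V)$, so on the stalk we recover $\alpha$. Conversely, to see $\overline{\psi}_z \circ \overline{f^*}_z$ is the identity, take $\beta \in \cal{F}_{(X_2,Z),z}$ represented by $\beta_U$; by property (2), $\overline{\psi}$ applied to $f^* \beta_U$ equals the restriction of $\beta_U$ to $V$, i.e.\ again $\beta$ on the stalk.

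The nontrivial content has already been absorbed into Theorem \ref{general Nis invariance}, whose proof builds the smooth correspondences of Proposition \ref{Nis corr}; the remaining step here is essentially bookkeeping. The only point requiring a little care is that $\psi$ is not canonical (it depends on the choices of compactification and sections), so one cannot assemble a morphism of presheaves before sheafifying; however this is harmless because in the end we only need the induced map on stalks to be an isomorphism, and the relations verified by $\overline{\psi}$ at each finite stage pass to the colimit and pin down the inverse uniquely.
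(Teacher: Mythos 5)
Your argument is correct and is exactly the deduction the paper leaves implicit (the paper merely asserts that Corollary~\ref{V 4.13} follows from Theorem~\ref{general Nis invariance} without spelling it out, mirroring the way \cite{CTPST} derives its Cor.~4.13 from its Prop.~4.12). You fill in the details as intended: pass to stalks, use property (1) to descend $\psi$ to the double quotient, and use properties (2) and (3) to identify $\overline{\psi}_z$ as a two-sided inverse in the colimit. The one small point worth tightening is that Theorem~\ref{general Nis invariance} is stated for a \emph{closed} point $z \in Z_2$, whereas you say ``stalkwise at every $z \in Z$''; since $Z$ is of finite type over a field, hence Jacobson, it suffices to verify the stalk isomorphism at closed points, and that is what the theorem delivers, so the argument goes through after this standard remark.
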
 

Then we have the analogue of \cite[Thm.~4.14]{CTPST}, \cite[Thm.~23.12]{MVW}.

\begin{theorem} \label{V 4.14} Let $k$ be a perfect infinite field, and let $\cal{F}$ be a homotopy invariant presheaf on $\Sm /k$ with oriented weak transfers for affine varieties.  Let $X$ be a smooth $k$-scheme and $Z \into X$ a smooth divisor.  Then about any $x \in X$ there is an open neighborhood $U$ and isomorphisms
$$\cal{F}_{(U \times Y, (U \cap Z) \times Y)} \cong \cal{F}_{(\A \times (U \cap Z) \times Y, (U \cap Z) \times Y)}$$
of sheaves on ${((U \cap Z) \times Y)}_{Zar}$ for all $Y \in \Sm /k$, natural in $Y$.
\end{theorem}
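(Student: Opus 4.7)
The plan is to reduce the statement to Corollary \ref{V 4.13} by constructing, after possibly shrinking $X$ about $x$, a common étale refinement of $U$ and $\A \times (U \cap Z)$ compatible with their divisors.

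First I would set up the local geometry. After shrinking $X$ to an affine open neighborhood $U$ of $x$, I choose a regular function $t \in \Gamma(U, \cal{O}_U)$ with zero scheme $U \cap Z$, and functions $\tilde{z}_1, \ldots, \tilde{z}_{n-1} \in \Gamma(U, \cal{O}_U)$ whose restrictions $z_1, \ldots, z_{n-1}$ to $U \cap Z$ form a regular system of parameters at $x$. After further shrinking $U$, the restriction $\rho_0 := (z_1, \ldots, z_{n-1}) : U \cap Z \to \bb{A}^{n-1}$ is étale, and the morphism $(t, \tilde{z}_1, \ldots, \tilde{z}_{n-1}) : U \to \bb{A}^n$ is étale. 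Form the fiber product $\tilde{U} := U \times_{\bb{A}^{n-1}} (U \cap Z)$ using $(\tilde{z}_1, \ldots, \tilde{z}_{n-1}) : U \to \bb{A}^{n-1}$ and $\rho_0$, and let $p : \tilde{U} \to U$ be the projection. Then $p^{-1}(U \cap Z) = (U \cap Z) \times_{\bb{A}^{n-1}} (U \cap Z)$ has the diagonal copy of $U \cap Z$ as a clopen component since $\rho_0$ is étale, so I may choose a Zariski open $W \subset \tilde{U}$ containing this diagonal and meeting $p^{-1}(U \cap Z)$ only in it. The projection $p|_W : W \to U$ is étale with $p^{-1}(U \cap Z) \cap W = U \cap Z$ and $p|_{U \cap Z}$ the identity. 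The composite $\pi := (t \circ p, \mathrm{pr}_{U \cap Z}) : W \to \A \times (U \cap Z)$ has differential an isomorphism along the diagonal (from the choice of $t$ and the $\tilde{z}_i$), hence after one further shrinking of $W$ is étale globally, with $\pi^{-1}(\{0\} \times (U \cap Z)) = U \cap Z$ and $\pi|_{U \cap Z}$ the canonical isomorphism onto $\{0\} \times (U \cap Z)$.

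Second, for any $Y \in \Sm/k$, the morphisms $p \times \id_Y : W \times Y \to U \times Y$ and $\pi \times \id_Y : W \times Y \to \A \times (U \cap Z) \times Y$ are étale, and the preimage of the smooth divisor $(U \cap Z) \times Y$ in each target is $(U \cap Z) \times Y \subset W \times Y$, mapping isomorphically. Applying Corollary \ref{V 4.13} to each morphism yields a chain of canonical isomorphisms
$$\cal{F}_{(U \times Y, (U \cap Z) \times Y)} \xleftarrow{\sim} \cal{F}_{(W \times Y, (U \cap Z) \times Y)} \xrightarrow{\sim} \cal{F}_{(\A \times (U \cap Z) \times Y, (U \cap Z) \times Y)}$$
of sheaves on $((U \cap Z) \times Y)_{Zar}$. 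Naturality in $Y$ is automatic: the data $(U, W, p, \pi)$ depend only on $X$ and $x$, and the isomorphisms of Corollary \ref{V 4.13} are induced by restriction along the étale maps, which commutes with base change along morphisms $Y \to Y'$.

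The main obstacle is the geometric construction of $W$ in the first step: a Zariski-local retraction $U \to U \cap Z$ need not exist, which forces the passage to the étale neighborhood $W \to U$. Securing simultaneously that $p|_W$ and $\pi$ are étale with the prescribed behavior on the divisor requires choosing $W$ inside the fiber product $\tilde{U}$ to isolate the diagonal component of $p^{-1}(U \cap Z)$ and then shrinking to guarantee étaleness of $\pi$. Once $W$ is in hand, the remainder is a formal consequence of the étale excision result, with the auxiliary factor $Y$ simply carried along.
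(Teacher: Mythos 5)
Your proposal is correct and follows essentially the same route as the paper: the paper's proof simply cites the geometric construction from Voevodsky's CTPST Theorem 4.14 and applies Corollary \ref{V 4.13} to the étale arrows appearing there, and your argument spells out exactly that construction — the common étale neighborhood $W$ of $(U, U\cap Z)$ and $(\A \times (U\cap Z), 0 \times (U\cap Z))$ — followed by the same application of Corollary \ref{V 4.13} to $p \times \id_Y$ and $\pi \times \id_Y$.
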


\begin{proof} We use the constructions of \cite[Thm.~4.14]{CTPST}, then apply Corollary $\ref{V 4.13}$ to the arrows in the diagram appearing at the end of the proof of \cite[Thm.~4.14]{CTPST}.  \end{proof}

\begin{corollary} \label{V 4.14 -1} In the situation of Theorem $\ref{V 4.14}$ there are isomorphisms
$$\cal{F}_{(U \times Y, (U \cap Z) \times Y)} \cong {({\cal{F}}_{-1})}_{Zar} .$$
\end{corollary}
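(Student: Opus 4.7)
The plan is to combine Theorem \ref{V 4.14} with the earlier corollary identifying $(\cal{F}_{-1})_{Zar}$ with the relative sheaf $\cal{F}_{(-\times \A, -\times 0)}$ (the corollary stated just after Proposition \ref{V 4.11}). The strategy is purely formal once those two results are available.

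First, I would apply Theorem \ref{V 4.14} to rewrite the left-hand side as
$$\cal{F}_{(U \times Y, (U \cap Z) \times Y)} \cong \cal{F}_{(\A \times (U \cap Z) \times Y, (U \cap Z) \times Y)}$$
as sheaves on $((U \cap Z) \times Y)_{Zar}$. Note that $(U \cap Z) \times Y$ is a smooth $k$-scheme, so it is a legitimate base for the identification of Proposition \ref{V 4.11} (and its corollary).

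Next, I would apply the corollary of Proposition \ref{V 4.11} with the smooth $k$-scheme $(U \cap Z) \times Y$ playing the role of ``$Y$'' in that statement. This yields a canonical isomorphism
$${(\cal{F}_{-1})}_{Zar} \cong \cal{F}_{(((U \cap Z) \times Y) \times \A,\, ((U \cap Z) \times Y) \times 0)}$$
on $((U \cap Z) \times Y)_{Zar}$. Up to a harmless permutation of factors in $\A \times (U \cap Z) \times Y$, the right-hand side agrees with the relative sheaf appearing in the previous display, so chaining the two isomorphisms gives the claim.

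The main (minor) point to check is naturality and compatibility of the identification under the factor-swap $\A \times (U \cap Z) \times Y \cong ((U \cap Z) \times Y) \times \A$, and the verification that the proof of Proposition \ref{V 4.11} (and hence its corollary) applies with an arbitrary smooth $k$-scheme as the base of localization --- this is built into the statement of the corollary. There is no real obstacle: once Theorem \ref{V 4.14} is in hand, the corollary is a formal consequence of the computation of $(\cal{F}_{-1})_{Zar}$ already established via Proposition \ref{A1 local} and Proposition \ref{V 4.11}.
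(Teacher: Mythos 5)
Your proposal is correct and matches the paper's (very terse) proof, which simply cites Proposition \ref{V 4.11}: one first applies Theorem \ref{V 4.14} to rewrite the left-hand side as $\cal{F}_{(\A \times (U \cap Z) \times Y,\, (U \cap Z) \times Y)}$, then identifies that with $(\cal{F}_{-1})_{Zar}$ via Proposition \ref{V 4.11} (whose hypothesis holds by Proposition \ref{A1 local}, as packaged in the corollary you invoke). You have simply spelled out the two steps the paper leaves implicit.
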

\begin{proof} Use Proposition $\ref{V 4.11}$. \end{proof}

\begin{definition} A presheaf $\cal{F}$ \textit{has contractions} if it satisfies the conclusion of $\ref{V 4.14 -1}$.  By $\cal{F}_{-1}$ we denote the presheaf whose value on a scheme $X$ is $\coker(\cal{F}(X \times \A) \to \cal{F}(X \times (\A -0)) )$. \end{definition}

\begin{corollary}[see \cite{MVW} Lemma 22.10] \label{semilocal MV2} Let $k$ be a perfect infinite field, and let $\cal{F}$ be a homotopy invariant presheaf of abelian groups on $\Sm / k$ with oriented weak transfers for affine varieties.  Let $S$ be the semilocal scheme of a finite set of points on a smooth $k$-scheme, and let $S = U_0 \cup V$ be a Zariski open cover.  Then there exists an open $U \subset U_0$ such that $S = U \cup V$ and the Mayer-Vietoris sequence:
$$0 \to \cal{F}(S) \to \cal{F}(U) \oplus \cal{F}(V) \to \cal{F}(U \cap V) \to 0$$
is exact. \end{corollary}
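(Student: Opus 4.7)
The plan is to reduce the Mayer--Vietoris exactness to an excision statement handled by the machinery of Section~\ref{Nis section}. Set $Z := S - V$, a closed subscheme of $S$ contained in $U_0$. For any open $U \subseteq U_0$ containing (the trace on $S$ of) $Z$ we have $S = U \cup V$ and $U \cap V = U - Z$; the task is to find such a $U$ for which the three-term sequence is exact. Injectivity of the first map is immediate from Corollary~\ref{semilocal inj}, since $\cal{F}(S) \to \cal{F}(U)$ is injective for any nonempty open $U \subseteq S$.

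For the remaining exactness, I will show the natural map
\[
\cal{F}(V)/\cal{F}(S) \longrightarrow \cal{F}(U-Z)/\cal{F}(U)
\]
is an isomorphism for a suitable choice of $U$; this is equivalent to the simultaneous exactness at $\cal{F}(U) \oplus \cal{F}(V)$ and surjectivity at $\cal{F}(U \cap V)$. Writing $S$ as the filtered limit of its finite-type open neighborhoods $W$ of the closed points of $S$ inside $U_0$ (and correspondingly $V$ as the limit of the $W - Z$), commutation of filtered colimits with cokernels expresses the left-hand side as $\varinjlim_W \cal{F}(W - Z)/\cal{F}(W)$. It therefore suffices to show that this directed system is essentially constant on a cofinal subsystem, and to take $U$ to be any member of that subsystem.

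To prove stability, I will invoke the Nisnevich excision results of Section~\ref{Nis section}. By Theorem~\ref{V 4.14} together with Corollary~\ref{V 4.14 -1}, the Zariski sheaf $\cal{F}_{(W, W \cap Z)}$ on $(W \cap Z)_{\mathrm{Zar}}$ is canonically identified with the contraction ${(\cal{F}_{-1})}_{\mathrm{Zar}}$, in a way that is independent of the chosen neighborhood $W$ of $Z$ in $U_0$. This gives that the transition maps $\cal{F}(W - Z)/\cal{F}(W) \to \cal{F}(W' - Z)/\cal{F}(W')$, for $W' \subseteq W$ sufficiently small, are isomorphisms at the level of Zariski stalks at every closed point of $Z$. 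Combining this stalk information with Corollary~\ref{semilocal inj}---a section whose class vanishes in every semilocal stalk of $Z$ must already vanish on a common open neighborhood---promotes the stalk assertion to the required presheaf-level isomorphism.

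The main obstacle is this final passage from sheaf-level excision to a presheaf-level comparison of quotients. Sheafification in general loses information, so ensuring that the actual presheaf quotients $\cal{F}(W - Z)/\cal{F}(W)$ agree---not merely their Nisnevich sheafifications---requires a careful interplay between the Nisnevich excision Theorem~\ref{general Nis invariance}, the semilocal injectivity of Corollary~\ref{semilocal inj}, and the contraction identification of Corollary~\ref{V 4.14 -1}, in order to rule out ``ghost'' classes in the kernel or cokernel of the transition maps. Once this translation is in place the Mayer--Vietoris sequence follows formally, and $U$ is taken to be any open neighborhood of $Z$ in $U_0$ at which the directed system has stabilized.
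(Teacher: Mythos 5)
Your high-level outline shares one correct observation with the paper's proof---reduce Mayer--Vietoris to an excision isomorphism on quotients and then formally assemble the short exact sequence---but the execution has two genuine gaps, the first of which is fatal.

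\textbf{The choice of $Z$ is wrong.} You set $Z := S - V$ and then invoke Theorem~\ref{V 4.14} and Corollary~\ref{V 4.14 -1} for the pair $(W, W \cap Z)$. Those results (and Theorem~\ref{general Nis invariance}, which underlies them) require $Z$ to be a \emph{smooth divisor}. The closed set $S - V$ has no reason to be smooth, reduced, irreducible, or even of pure codimension one. This is precisely the obstruction the hypothesis ``there exists $U \subset U_0$'' is designed to circumvent, and the missing idea is the paper's first step: since $U_0$ may be shrunk, one is free to discard a closed set larger than $S - U_0$, so one chooses (by Bertini, using $k$ infinite) a fresh smooth divisor $Z$ in the ambient smooth scheme passing through the finitely many closed points of $S$ not in $U_0$ and meeting $S$ inside $V$; then $U := S - Z$ is the open subscheme of $U_0$ that works. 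You keep $V$ fixed and work with the intrinsic $S - V$, which removes the freedom that makes the argument go through.

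\textbf{The sheaf-to-presheaf passage is not closed.} You correctly flag that Corollary~\ref{V 4.14 -1} is an isomorphism of Zariski sheaves on $Z_{\mathrm{Zar}}$ while what you need is an isomorphism of the actual presheaf quotients $\cal{F}(W-Z)/\cal{F}(W)$, and you say this ``requires a careful interplay'' without carrying it out; as written this is an acknowledged gap, not a proof. The paper avoids it entirely by applying Theorem~\ref{general Nis invariance} directly to the open immersion $j\colon V \hookrightarrow S$ with $Z_2 = Z$: the theorem produces an explicit presheaf-level map $\psi\colon \cal{F}(V - Z) \to \cal{F}(S - Z)$ whose properties (1)--(3) say exactly that $\overline{\psi}$ inverts the natural map $\cal{F}(S-Z)/\cal{F}(S) \to \cal{F}(V-Z)/\cal{F}(V)$. (The remark at the start of the paper's proof, that the constructions in Proposition~\ref{Nis corr} work for a finite set of points over an infinite $k$, is what lets one apply this at the semilocal level, bypassing your colimit-stabilization detour.) Once that isomorphism is in hand, the $9$-lemma applied to the $3\times 3$ diagram built from $\cal{F}(S), \cal{F}(U), \cal{F}(V), \cal{F}(U\cap V)$ gives exactness of the middle row, which is the desired Mayer--Vietoris sequence.
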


\begin{proof} If $k$ is infinite, all of the constructions entering into Proposition $\ref{Nis corr}$ and Theorem $\ref{general Nis invariance}$ work for $z_2$ replaced by a finite set of points, and the neighborhood $V$ of $z$ can be made to contain any given finite set of points (not necessarily on the divisor $Z_2$).  If the sections are chosen sufficiently generic, the incidences of their zero schemes will avoid a finite set.   

Let $Z$ be a smooth divisor passing through the (finitely many) closed points not contained in $U$; this is a closed subscheme of $V$ which is also closed in $S$.  Then $U := S - Z \subset U_0$ is an open subscheme of $U_0$ such that $U \cup V = S$.  Furthermore we have $V - Z = V \cap (X - Z) = V \cap U$.

We apply Theorem $\ref{general Nis invariance}$ with $f : X_1 \to X_2$ the open immersion $j: V \subset S$ and $Z_2 = Z$.  Since $Z$ is a closed subscheme of $V$ which remains closed in $S$, the morphism $j^{-1}Z \to Z$ is an isomorphism.  Hence we get a map $\psi : \cal{F}(V - Z) \to \cal{F}(S - Z)$ inducing an isomorphism $\cal{F}(V - Z) / \cal{F}(V) \cong \cal{F}(S-Z) / \cal{F}(S)$ inverse to the natural map, hence the natural restriction map $\cal{F}(U) / \cal{F}(S) \to \cal{F}(U \cap V) / \cal{F}(V)$ is also an isomorphism.  Thus we may apply the 9-lemma to the diagram (whose border of outer zeroes has been omitted):

$$\xym{
0 \ar[r] \ar[d]  & \cal{F}(V) \ar[r]^-= \ar[d]^-{0,1} & \cal{F}(V) \ar[d] \\
\cal{F}(S) \ar[r] \ar[d]^-= & \cal{F}(U) \oplus \cal{F}(V) \ar[d]^-{p_1} \ar[r] & \cal{F}(U \cap V) \ar[d] \\
\cal{F}(S) \ar[r] & \cal{F}(U)  \ar[r] & \cal{F}(U) / \cal{F}(S) \\ }$$
to conclude the middle row is exact. \end{proof}

We can compute the Nisnevich cohomology of $U \subset \A_k$ with coefficients in a presheaf with oriented weak transfers.

\begin{proposition} \label{V 5.4} \label{MVW 22.7} Let $k$ be a field of characteristic $0$, and let $\cal{F}$ be a homotopy invariant presheaf of abelian groups on $\Sm / k$ with oriented weak transfers for affine varieties.  Then for any open subscheme $U \subset \A_k$, we have $H^0_{Nis}(U, \cal{F}_{Nis} ) = \cal{F}(U)$ and $H^i_{Nis}(U, \cal{F}_{Nis}) = 0 $ for $i \neq 0$. \end{proposition}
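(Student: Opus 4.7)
The plan is to imitate the semilocal argument of Corollary~\ref{semilocal MV2}: I will prove a Nisnevich Mayer--Vietoris sequence for open subschemes of $\A_k$, then combine it with the Zariski result of Corollary~\ref{V 4.15} via the Brown--Gersten descent principle.

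First, for any $U \subset \A_k$ open and any elementary Nisnevich distinguished square
$$\xym{ W \ar[r] \ar[d] & Y \ar[d]^-f \\ U - Z \ar[r] & U }$$
with $Z \subset U$ a finite set of closed points and $f$ \'etale inducing an isomorphism $f^{-1}(Z) \xrightarrow{\sim} Z$, I would establish the exactness of
$$0 \to \cal{F}(U) \to \cal{F}(U - Z) \oplus \cal{F}(Y) \to \cal{F}(W) \to 0.$$
Note that $Z$, being a finite set of closed points in the smooth curve $U$, is automatically a smooth divisor, so Theorem~\ref{general Nis invariance} applies at each point $z \in Z$. It yields for each $z$ a neighborhood $V_z$ and a morphism $\psi_z : \cal{F}(f^{-1}V_z - f^{-1}(V_z \cap Z)) \to \cal{F}(V_z - V_z \cap Z)$ satisfying properties (1)--(3) locally. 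I would choose the $V_z$ pairwise disjoint (possible since the $z$'s are distinct closed points in a curve) and patch the local $\psi_z$'s into a global $\psi : \cal{F}(W) \to \cal{F}(U-Z)$, using the injectivity of restriction maps between nonempty open subschemes of $\A_k$ (Corollary~\ref{A1 field injectivity}) to control the comparisons on overlaps and shrinkages. The three properties of Theorem~\ref{general Nis invariance} then hold globally for $\psi$, and the 9-lemma argument at the end of the proof of Corollary~\ref{semilocal MV2} yields the exactness of the Mayer--Vietoris sequence above.

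Second, I would combine this Nisnevich Mayer--Vietoris with the Zariski Mayer--Vietoris of Theorem~\ref{open affine line MV}. By the Morel--Voevodsky cd-structure formalism \cite{MV}, a presheaf satisfying Mayer--Vietoris for the generating squares of the Nisnevich cd-structure (Zariski covers plus elementary distinguished squares) automatically satisfies Nisnevich descent on the small Nisnevich site of $U$. In particular, $\cal{F}_{Nis}(U) = \cal{F}(U)$, and the $E_2$-degeneration of the descent spectral sequence gives $H^i_{Nis}(U,\cal{F}_{Nis}) = H^i_{Zar}(U, \cal{F}_{Zar})$ for all $i \geq 0$. The result then follows from Corollary~\ref{V 4.15}.

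The hard part will be the patching step in the first paragraph: Theorem~\ref{general Nis invariance} is stated only after shrinking $X_2$ around a single closed point, and one must either re-run the construction of Proposition~\ref{Nis corr} with $z_2$ replaced by the entire finite set $Z$ simultaneously (the Bertini and genericity arguments there remain valid because they impose only open conditions on the chosen sections, and the decisive fiber computations take place over the separate residue fields at the points of $Z$), or else carry out an inductive patching using the injectivity from Corollary~\ref{A1 field injectivity} to reconcile the $\psi_z$'s on successive shrinkages. Once this global $\psi$ is in hand, the remainder of the argument is routine.
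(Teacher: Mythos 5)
Your proposal shares the decisive ingredients with the paper's own proof — Theorem~\ref{general Nis invariance}, the Zariski computation of Corollary~\ref{V 4.15}, and the observation (which you correctly flag as the hard part) that the constructions of Proposition~\ref{Nis corr} must be re-run with $z_2$ replaced by an entire finite set of closed points, exactly as the paper does in the proof of Corollary~\ref{semilocal MV2}. But the overall architecture differs: the paper handles $H^0$ directly by a separatedness argument (every Nisnevich cover contains a dense open immersion, so injectivity follows from Corollary~\ref{semilocal inj}; surjectivity follows because Theorem~\ref{general Nis invariance} shows a Nisnevich section of $\cal{F}_{Nis}$ on $U$ lies Zariski-locally in $\cal{F}$, and $\cal{F}$ is a Zariski sheaf on $U$ by Theorem~\ref{open affine line MV}), and then disposes of all $i>0$ at once by noting that $U$ is a curve so only $i=1$ is at issue, where Theorem~\ref{general Nis invariance} supplies the needed Mayer--Vietoris. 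You instead route everything through the cd-structure formalism.

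That last step is where your argument has a gap. Brown--Gersten/Morel--Voevodsky descent on the small Nisnevich site $U_{Nis}$ requires the Mayer--Vietoris property for \emph{all} elementary distinguished squares over \emph{all} objects of $U_{Nis}$, i.e.\ over every \'etale $U$-scheme $V$, not just over $U$ and its Zariski opens. You establish (or propose to establish) Mayer--Vietoris only for squares whose base is $U$ or an open of $\A_k$. An arbitrary \'etale $U$-scheme $V$ is a smooth affine $k$-curve, but in general not an open subscheme of $\A_k$: Theorem~\ref{open affine line MV} does not apply to such $V$, and the explicit pencil constructions used to produce the homotopies would need to be rebuilt from scratch over $V$. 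Without Mayer--Vietoris over these $V$'s, the flasqueness hypothesis of the descent theorem is not verified, so one cannot conclude $\cal{F}(U)\simeq\mathbb{H}_{Nis}(U,\cal{F}_{Nis})$. The paper's route sidesteps this entirely: the $H^0$ statement is proved pointwise without any descent machinery, and for $i\geq 2$ the vanishing is automatic from the Krull-dimension bound on Nisnevich cohomological dimension, so only Mayer--Vietoris for covers of $U$ itself (not over arbitrary \'etale $U$-schemes) is needed for the remaining case $i=1$.

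A secondary issue is the patching of the local $\psi_z$'s. Theorem~\ref{general Nis invariance} produces $\psi_z$ with target $\cal{F}(V_z - V_z\cap Z)$ for a small neighborhood $V_z$ of a single $z$; taking the $V_z$ pairwise disjoint gives targets that neither cover $U-Z$ nor assemble into a single map to $\cal{F}(U-Z)$, and the three properties of $\psi$ are local statements about $V_z$, not about $U$. The paper's Corollary~\ref{semilocal MV2} handles this by re-running Proposition~\ref{Nis corr} with the whole finite set built in from the start, so that a single $\psi$ with the needed semilocal target is produced; the inductive-patching alternative you sketch would need the injectivity of restrictions to reconcile not just shrinkages but genuinely distinct constructions, and you would still face the issue that the common target is strictly smaller than $\cal{F}(U-Z)$.
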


\begin{proof} By Theorem $\ref{open affine line MV}$ we have $ \cal{F} \cong \cal{F}_{Zar}$ on $U_{Zar}$.  Since any Nisnveich cover contains a dense open immersion (to cover the generic point), Corollary $\ref{semilocal inj}$ implies the map $\cal{F}(U) \to \cal{F}_{Nis}(U)$ is injective.  Hence we may assume $\cal{F}$ is separated as a presheaf on $U_{Nis}$.

Now Theorem $\ref{general Nis invariance}$ implies that given any $a \in \cal{F}_{Nis}(U)$ and any closed point $x \in U$, there exists an open $x \in V \subset U$ such that $a |_V \in \cal{F}(V) \subset \cal{F}_{Nis}(V)$.  Thus Zariski locally, $a$ is induced by sections in $\cal{F}$, and since $\cal{F}$ is a Zariski sheaf, we get an element of $\cal{F}(U)$ inducing $a$.  Thus the map $\cal{F}(U) \to \cal{F}_{Nis}(U)$ is also surjective.

Since $U$ is a curve only the case $i=1$ remains, and Theorem $\ref{general Nis invariance}$ produces the Mayer-Vietoris sequence for a Nisnevich cover of $U$ (via the method of Corollary $\ref{semilocal MV2}$).
\end{proof}

\section{Homotopy invariance of cohomology}
We carry out the rest of Voevodsky's argument using certain simplifications due to Mazza-Voevodsky-Weibel \cite{MVW}.  
Observe that we could have worked exclusively with affine varieties: if we know the (functorial) agreement of Zariski and Nisnevich cohomology on affine varieties, then we obtain an isomorphism of \v{C}ech to derived functor spectral sequences.  Similarly, homotopy invariance of cohomology on the category of affine varieties implies homotopy invariance of cohomology on all of $\Sm / k$.

\begin{proposition} \label{V 4.21} Let $k$ be a field of characteristic $0$, let $\cal{F}$ be a homotopy invariant presheaf of abelian groups on $\Sm / k$ with oriented weak transfers for affine varieties, and let $X$ be a smooth $k$-scheme.  Then $\cal{F}_{Zar}(X \times \A) \cong \cal{F}_{Zar}(X)$. \end{proposition}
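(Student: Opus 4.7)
The plan is to reduce the identity $\cal{F}_{Zar}(X \times \A) \cong \cal{F}_{Zar}(X)$ to a stalk-level statement on $X_{Zar}$ and then invoke a Mayer--Vietoris argument for the affine line over a smooth local scheme. Define two presheaves on $X$ by $\cal{A}(U) := \cal{F}_{Zar}(U)$ and $\cal{B}(U) := \cal{F}_{Zar}(U \times \A)$. The first is the restriction of the Zariski sheaf $\cal{F}_{Zar}$ to $X_{Zar}$; the second is also a Zariski sheaf on $X_{Zar}$, because a Zariski cover $\{U_\alpha\}$ of $U$ yields a Zariski cover $\{U_\alpha \times \A\}$ of $U \times \A$, and $\cal{F}_{Zar}$ satisfies descent for the latter. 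The projection $p : X \times \A \to X$ and zero section $s_0$ produce a morphism $p^* : \cal{A} \to \cal{B}$ with retraction $s_0^*$, so $p^*$ is a split monomorphism of Zariski sheaves on $X$, and to prove it is an iso it suffices to check on every stalk.

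For $x \in X$, write $S = \spec \cal{O}_{X,x}$. The stalk $\cal{A}_x$ is $\cal{F}(S)$ (since $\cal{F}(S) = \cal{F}_{Zar}(S)$ for $S$ smooth local: Zariski covers of $S$ are trivial, and separation holds by Corollary \ref{semilocal inj}), while the stalk $\cal{B}_x$ is $\varinjlim_{U \ni x} \cal{F}_{Zar}(U \times \A)$, which I denote $\cal{F}_{Zar}(\A_S)$. Thus it suffices to show $p^*_x : \cal{F}(S) \to \cal{F}_{Zar}(\A_S)$ is an isomorphism for every smooth local $k$-scheme $S$. Factoring $p^*_x$ as $\cal{F}(S) \to \cal{F}(\A_S) \to \cal{F}_{Zar}(\A_S)$, the homotopy invariance of the presheaf $\cal{F}$ gives $\cal{F}(S) \cong \cal{F}(\A_S)$ (the latter defined by the usual colimit), so the task reduces to showing the canonical presheaf-to-sheafification map $\cal{F}(\A_S) \to \cal{F}_{Zar}(\A_S)$ is an isomorphism, i.e.\ that $\cal{F}$ is already a Zariski sheaf at $\A_S$.

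I would deduce this last statement from a Mayer--Vietoris exact sequence for Zariski covers of $\A_S$ parallel to Theorem \ref{open affine line MV}, followed by the argument of Lemma \ref{V 4.15 proof} / Corollary \ref{V 4.15} in the relative setting. The correspondence constructions of Proposition \ref{Zariski homotopies}, which build smooth finite correspondences via pencils of sections on $U \times_k \PP^1$, admit a direct relative version on $\A_S \times_S \PP^1_S$; indeed very similar relative constructions already appear in Proposition \ref{A1 local corr}. The main obstacle is verifying that the Bertini-style genericity arguments of Remark \ref{first bertini} remain available over the local base $S$ (smoothness of general pencil members is checked on the generic fiber over $S$ and then spread out by openness, and the $H^1$-vanishing needed to lift prescribed boundary data on $U \times_S \PP^1_S$ holds for sufficiently ample line bundles relative to the first projection). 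As in Remark \ref{orientation necessary}, the orientation hypothesis is required to add the transfers along reducible zero schemes whose smooth components carry inequivalent trivializations of their normal bundles.
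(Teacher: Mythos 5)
Your reduction to stalks is sound as far as it goes, and you correctly identify the crux: one must show $\cal{F}(\A_S) \to \cal{F}_{Zar}(\A_S)$ is an isomorphism for $S$ a smooth local scheme. But the plan then stalls at precisely the point you flag as the ``main obstacle.'' You are proposing to establish a Mayer--Vietoris sequence for arbitrary Zariski covers of open subschemes of $\A_S$ with $S$ a positive-dimensional local base, by relativizing Proposition~\ref{Zariski homotopies}, and this is not a small variant of what the paper supplies. The construction of Proposition~\ref{Zariski homotopies} depends in several essential ways on the base being a field: the complements $\A - U$, $U - U_i$, $U_\infty$ are finite sets of closed points (over $S$ they could be positive-dimensional and not even flat over $S$); the Picard computations such as $\pic(U \times (U - V)) = 0$ and $\pic(U \times \PP^1) \cong \bb{Z}$ are specific to that setting; and after the reduction of a general cover to a two-element cover, the complements are arbitrary codimension-one closed subschemes of $\A_S$, not pullbacks of divisors in $\A^1$, so the pencil machinery does not directly apply. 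Proposition~\ref{A1 local corr} does handle a relative base, but only for the single special fiber $0_S$ and its complement, and the paper is careful there precisely because the relative situation is delicate. Asserting a ``direct relative version'' of Proposition~\ref{Zariski homotopies} is in effect asking for a new theorem.

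The paper's proof avoids all of this by localizing past the local scheme, all the way to the fraction field, instead of stopping at $S$. Write $X$ as integral with generic point $\eta$. The zero section splits $\cal{F}_{Zar}(\A_X) = \cal{F}_{Zar}(X) \oplus \ker(0_X^*)$, compatibly with the same splitting over $\eta$. Corollary~\ref{V 4.19} (injectivity of $\cal{F}_{Zar}$ along dense open immersions, which follows from the semilocal effacement Theorem~\ref{semilocal efface}) shows $\ker(0_X^*) \into \ker(0_\eta^*)$. But $\eta$ is the spectrum of a field, so Corollary~\ref{V 4.16} --- the computation of $\cal{F}_{Zar}$ on $\A$ over a field, which \emph{is} furnished by Theorem~\ref{open affine line MV} --- gives $\ker(0_\eta^*) = 0$, and the claim follows. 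In short, the injectivity $\cal{F}_{Zar}(X) \into \cal{F}_{Zar}(\eta_X)$ absorbs exactly the input you were proposing to manufacture with a relative Bertini argument. You could salvage your route by applying this same injectivity at the stalk level (embed $\ker(p_x^*)$ into its value at the generic point of $S$) and then only the field-case Mayer--Vietoris is required; as written, though, the proposal has a genuine gap.
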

\begin{proof} We may assume $X$ is integral with generic point $\eta$.  The zero section $0_X : X \to \A_X$ induces a splitting $\cal{F}_{Zar}(\A_X) = \cal{F}_{Zar}(X) \oplus \ker (0_X^*)$ compatible with the splitting $\cal{F}_{Zar}(\A_\eta) = \cal{F}_{Zar}(\eta) \oplus \ker (0_\eta^*)$.  By Corollary $\ref{V 4.19}$ the map $\cal{F}_{Zar}(\A_X) \to \cal{F}_{Zar}(\A_\eta)$ is injective, hence the maps $\cal{F}_{Zar}(X) \to \cal{F}_{Zar}(\eta)$ and $\ker (0_X^*) \to \ker (0_\eta^*)$ are both injective.  By Corollary $\ref{V 4.16}$ we have $\cal{F}(\eta) \cong \cal{F}_{Zar}(\A_\eta)$, hence $ \ker (0_\eta^*)$ is trivial.  But then $\ker (0_X^*)$ must be trivial as well, and the result follows. \end{proof}

\begin{notation} For a presheaf $\cal{F}$ of abelian groups on $\Sch /k$ we denote by $s_{Zar}(\cal{F})$ its Zariski separation.  On a $k$-scheme $X$ the value of $s_{Zar}(\cal{F})$ is $\cal{F}(X) / \cup (\ker (\cal{F}(X) \to \oplus_i \cal{F}(U_i) ))$, where the union is taken over the partially ordered set of Zariski covers $\{ U_i \}$ of $X$.  \end{notation}

\begin{proposition} \label{V 4.22} Let $k$ be a perfect infinite field, and let $\cal{F}$ be a homotopy invariant presheaf of abelian groups on $\Sm / k$ with (oriented) weak transfers for affine varieties.  Then $s_{Zar}(\cal{F})$ has a unique structure of homotopy invariant presheaf with (oriented) weak transfers for affine varieties such that the canonical morphism $\cal{F} \to s_{Zar}(\cal{F})$ is a morphism of presheaves with weak transfers for affine varieties. \end{proposition}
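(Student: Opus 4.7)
The strategy is to exploit the surjection $\cal{F}(X) \twoheadrightarrow s_{Zar}(\cal{F})(X)$, reducing both existence and uniqueness of the transfer structure to showing that the transfer on $\cal{F}$ preserves the kernel $K(X) := \ker(\cal{F}(X) \to s_{Zar}(\cal{F})(X))$. Uniqueness is immediate: for a finite generically \'etale $f : X \to Y$ with $N$-trivial embedding trivialized by $\psi$, and any $\bar a \in s_{Zar}(\cal{F})(X)$ lifted to $a \in \cal{F}(X)$, compatibility of transfer structures forces $f_*^\psi(\bar a) = \overline{f_*^\psi(a)}$. Existence will follow once this formula is shown to be independent of the choice of lift, i.e., once we establish $f_*^\psi(K(X)) \subset K(Y)$.

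To verify this inclusion, take $Y$ connected with generic point $\eta_Y$ (extend over components additively). The composition
$$s_{Zar}(\cal{F})(Y) \hookrightarrow \cal{F}_{Zar}(Y) \hookrightarrow \cal{F}_{Zar}(\eta_Y) = \cal{F}(\eta_Y)$$
is injective: the first map because $s_{Zar}(\cal{F})$ is separated, the second from Corollary \ref{V 4.19} applied along a cofinal system of dense opens. So it suffices to show $f_*^\psi(a)|_{\eta_Y} = 0$ in $\cal{F}(\eta_Y)$ for every $a \in K(X)$. Axiom (4) of Definition \ref{PSOWT} applied to each open immersion $V \subset Y$ gives $f_*^\psi(a)|_V = (f_V)_*^\psi(a|_{f^{-1}(V)})$, where $f_V := f \times_Y V$; passing to the limit over $V \ni \eta_Y$ yields a map $f_{\eta *}^\psi : \cal{F}(X_{\eta_Y}) \to \cal{F}(\eta_Y)$ with $f_*^\psi(a)|_{\eta_Y} = f_{\eta *}^\psi(a|_{X_{\eta_Y}})$. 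Now $a$ vanishes on some Zariski cover $\{U_i\}$ of $X$, and each generic point of $X$ lies in some $U_i$, so $a|_{X_{\eta_Y}} = 0$, whence the desired vanishing.

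With $f_*^\psi$ defined on $s_{Zar}(\cal{F})$, axioms (1)--(5) of Definition \ref{PSOWT} and the orientation property (when applicable) transfer automatically: each identity is verified by lifting to $\cal{F}$, invoking the corresponding identity there, and projecting via the natural, functorial surjection. Homotopy invariance of $s_{Zar}(\cal{F})$ is likewise immediate, since both $pr_1^* : \cal{F}(X) \to \cal{F}(X \times \A)$ and the zero-section pullback $0^*$ preserve $K$ (a cover $\{U_i\}$ of $X$ yields the cover $\{U_i \times \A\}$ of $X \times \A$, and conversely a cover $\{V_j\}$ of $X \times \A$ pulls back to the cover $\{V_j \cap (X \times 0)\}$ of $X$), so they descend to inverse isomorphisms on the quotient. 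The main obstacle is the well-definedness argument in the second paragraph, whose viability rests crucially on the generic-point injectivity furnished by Theorem \ref{semilocal efface} and Corollary \ref{V 4.19}.
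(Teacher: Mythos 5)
Your proposal is correct, and it takes a genuinely different (though closely related) path to the key step than the paper does. Both proofs reduce to showing that the weak transfer $f_*^\psi$ carries the kernel $K(X) = \ker(\cal{F}(X) \to s_{Zar}(\cal{F})(X))$ into $K(Y)$. The paper handles this by noting that compatibility with open immersions lets one pass to $Y = \spec\cal{O}_{Y,y}$; then $X$ is semilocal, Corollary~\ref{semilocal inj} forces a locally trivial $a$ to be globally zero on $X$, and hence $f_*(a)=0$ in a neighborhood of $y$. You instead route through the sheafification: the injectivity $s_{Zar}(\cal{F})(Y)\hookrightarrow\cal{F}_{Zar}(Y)\hookrightarrow\cal{F}(\eta_Y)$ (the latter via Corollary~\ref{V 4.19}, itself a consequence of the same Theorem~\ref{semilocal efface}) reduces the problem to vanishing at the generic point, which you check by base-changing the transfer to the generic fiber. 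Both arguments lean on the same underlying input; the paper's version is a touch more elementary since it never invokes $\cal{F}_{Zar}$, while yours avoids the ``may assume $Y$ is local'' step and produces the vanishing in $s_{Zar}(\cal{F})(Y)$ in one stroke.

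One small point you should expand: the claim that $a|_{X_{\eta_Y}}=0$ follows from ``each generic point of $X$ lies in some $U_i$'' needs the observation that $\cal{F}(X_{\eta_Y})$ decomposes as the finite product $\prod_j \cal{F}(\eta_j)$ over the generic points $\eta_j$ of $X$. This holds because $f$ is finite, flat, and generically \'etale (so for small enough dense $V\subset Y$ the preimage $f^{-1}(V)$ splits into pieces each containing exactly one $\eta_j$, and these pieces are cofinal among opens containing $\eta_j$ by finiteness of $f$), combined with the additivity of $\cal{F}$. Without spelling this out, the passage from ``locally zero on a cover'' to ``zero in the colimit $\cal{F}(X_{\eta_Y})$'' is a jump, since a presheaf need not glue and the $U_i$'s need not restrict to a disjoint cover of the generic fiber without further shrinking. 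With that filled in, the argument is complete.
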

\begin{proof} The presheaf $s_{Zar}(\cal{F})$, being a quotient of a homotopy invariant presheaf, is homotopy invariant.  So suppose $f : X \to Y$ is a morphism for which a weak transfer $f_* : \cal{F}(X) \to \cal{F}(Y)$ exists and for which a weak transfer $f_* : s_{Zar}(\cal{F})(X) \to s_{Zar}(\cal{F})(Y)$ must be constructed.

We need to show that if $a \in \cal{F}(X)$ is a locally trivial element, then so is $f_*(a) \in \cal{F}(Y)$.  Since the weak transfers are compatible with open immersions, we may assume $Y$ is local.  Then $X$ is semilocal and all of the restriction maps $\cal{F}(X) \to \cal{F}(U_i)$ are injective by Corollary $\ref{semilocal inj}$.  But then already $a$ vanishes on the semilocal scheme $X$, hence $f_*(a)$ is zero as well.  \end{proof}

\begin{proposition} \label{V 4.24} \label{MVW 22.12}  Let $k$ be a perfect infinite field, let $\cal{F}$ be a homotopy invariant presheaf of abelian groups on $\Sm / k$ with oriented weak transfers for affine varieties, and let $S$ be a smooth semilocal $k$-scheme.  Then $\cal{F}_{Zar}(S) = \cal{F}(S)$. \end{proposition}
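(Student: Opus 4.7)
The plan is to reduce to the case where $\cal{F}$ is Zariski separated and then induct on the number of closed points of $S$, gluing via the Mayer-Vietoris sequence of Corollary \ref{semilocal MV2} at each inductive step. Set $\cal{G} := s_{Zar}(\cal{F})$; by Proposition \ref{V 4.22}, $\cal{G}$ is a homotopy invariant presheaf on $\Sm/k$ with oriented weak transfers for affine varieties, and by construction it is Zariski separated, so $\cal{G}(X) \hookrightarrow \cal{G}_{Zar}(X) = \cal{F}_{Zar}(X)$ for every $X \in \Sm/k$. Corollary \ref{semilocal inj} already implies that $\cal{F}$ is separated on smooth semilocal schemes (any element vanishing on a Zariski cover vanishes on some nonempty open and is therefore zero), whence $\cal{F}(S) = \cal{G}(S)$. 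It thus suffices to show $\cal{G}(S) = \cal{G}_{Zar}(S)$; replacing $\cal{F}$ by $\cal{G}$, I may assume $\cal{F}$ itself is Zariski separated.

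I then induct on the number $n$ of closed points of $S$. For $n = 1$, $S$ is local: every open of $S$ containing the unique closed point equals $S$, so every Zariski cover admits $\{S\}$ as a refinement and $\cal{F}_{Zar}(S) = \cal{F}(S)$ formally. For $n \geq 2$ with closed points $s_1, \ldots, s_n$, I consider the smaller semilocalizations $T := S_{\{s_1, \ldots, s_{n-1}\}}$ and $T' := S_{\{s_n\}}$, which are smooth semilocal $k$-schemes with $n-1$ and $1$ closed points; by the inductive hypothesis $\cal{F}_{Zar}(T) = \cal{F}(T)$ and $\cal{F}_{Zar}(T') = \cal{F}(T')$. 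Given $a \in \cal{F}_{Zar}(S)$, the restrictions $a|_T$ and $a|_{T'}$ are represented by sections $\tilde a_1 \in \cal{F}(U_1)$ and $\tilde a_2 \in \cal{F}(U_2)$ for some opens $U_1 \supset \{s_1, \ldots, s_{n-1}\}$ and $U_2 \ni s_n$; after shrinking, I may assume $\tilde a_i$ represents $a|_{U_i}$ in $\cal{F}_{Zar}(U_i)$ as well. Since $U_1 \cup U_2$ then contains every closed point of the semilocal scheme $S$, the union equals $S$, so $\{U_1, U_2\}$ is a Zariski cover.

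Finally, applying Corollary \ref{semilocal MV2} yields an open $U \subset U_1$ with $S = U \cup U_2$ and an exact Mayer-Vietoris sequence
$$0 \to \cal{F}(S) \to \cal{F}(U) \oplus \cal{F}(U_2) \to \cal{F}(U \cap U_2) \to 0.$$
Both $\tilde a_1|_U$ and $\tilde a_2$ represent $a|_{U \cap U_2} \in \cal{F}_{Zar}(U \cap U_2)$, and by separation of $\cal{F}$ they coincide already in $\cal{F}(U \cap U_2)$; exactness then produces $a' \in \cal{F}(S)$ with $a'|_U = \tilde a_1|_U$ and $a'|_{U_2} = \tilde a_2$, and since $a$ and $a'$ agree on the cover $\{U, U_2\}$ they coincide in $\cal{F}_{Zar}(S)$. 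Hence $a \in \cal{F}(S)$, completing the induction. The main ingredient is Corollary \ref{semilocal MV2}, whose proof rests on Theorem \ref{general Nis invariance} and thereby on the orientation of the weak transfers; granted the Mayer-Vietoris sequence for semilocal schemes, the induction above is essentially formal.
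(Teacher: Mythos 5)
Your proof is correct. It rests on the same two pillars as the paper's argument --- the reduction to the Zariski-separated case via Proposition \ref{V 4.22} together with Corollary \ref{semilocal inj}, and the semilocal Mayer--Vietoris sequence of Corollary \ref{semilocal MV2} --- but organizes the gluing by a different induction. The paper fixes the semilocal scheme $S$ and inducts on the number of open sets in a Zariski cover of $S$ over which a given class in $\cal{F}_{Zar}(S)$ is represented by sections of $\cal{F}$, reducing the cover one open at a time. You instead induct on the number of closed points of $S$, applying the inductive hypothesis to the semilocalizations of $S$ at proper subsets of its closed point set and reassembling with a single two-element Mayer--Vietoris cover $\{U, U_2\}$. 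Both inductions are driven by the same Corollary \ref{semilocal MV2} step and are logically interchangeable here; a mild advantage of your organization is that the base case ($S$ local, so every open containing the closed point equals $S$) is entirely formal since sheafification preserves stalks. One point you correctly but only implicitly rely on: separatedness of $\cal{F}$ on the non-finite-type schemes $U$, $U_2$, $U \cap U_2$ is the filtered colimit of the finite-type statement, which is harmless since a filtered colimit of monomorphisms of abelian groups is a monomorphism.
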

\begin{proof}  The canonical map $\cal{F}(S) \to s_{Zar}(\cal{F})(S)$ is injective by Corollary $\ref{semilocal inj}$, hence an isomorphism, so by Proposition $\ref{V 4.22}$ we may assume $\cal{F}$ is a separated presheaf.  As in \cite[4.24]{CTPST}, we use induction on the number of open sets in a cover together with Corollary $\ref{semilocal MV2}$ to construct a candidate lift of an element $a \in \cal{F}_{Zar}(S)$.  Then the assumption of separatedness and Corollary $\ref{semilocal inj}$ show the candidate lift works. \end{proof}

\begin{proposition} \label{V 4.25, 4.26} \label{MVW 22.1} \label{MVW 22.15} Let $k$ be a field of characteristic $0$, and let $\cal{F}$ be a homotopy invariant presheaf of abelian groups on $\Sm / k$ with oriented weak transfers for affine varieties.  Then $\cal{F}_{Zar}$ regarded as a sheaf on $\Sm / k$ has a unique structure of presheaf with oriented weak transfers for affine varieties such that $\cal{F} \to \cal{F}_{Zar}$ is a morphism of presheaves with transfer structure.  Furthermore the presheaf $\cal{F}_{Zar}$ is homotopy invariant and has contractions. \end{proposition}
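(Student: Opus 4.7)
The plan follows the strategy of \cite[4.25, 4.26]{CTPST}, adapted to our setting. First, by Proposition \ref{V 4.22} the Zariski separation $s_{Zar}(\cal{F})$ inherits the structure of a homotopy invariant presheaf with oriented weak transfers for affine varieties and has the same Zariski sheafification as $\cal{F}$, so I may assume $\cal{F}$ is itself Zariski separated. Under this assumption, Proposition \ref{V 4.24} yields $\cal{F}(S) = \cal{F}_{Zar}(S)$ for every smooth semilocal $k$-scheme $S$.

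Next I would construct the weak transfer structure on $\cal{F}_{Zar}$. Given an $N$-trivial embedding $(f,\alpha) : X \into Y \times \bb{A}^n$ with $X,Y$ smooth affine and a trivialization $\psi$ of the normal bundle, I define $f_*^\psi : \cal{F}_{Zar}(X) \to \cal{F}_{Zar}(Y)$ stalk by stalk on $Y$: for $y \in Y$, set $S_y := \spec \cal{O}_{Y,y}$ and $X_y := X \times_Y S_y$. Both are smooth semilocal, and by Lemma \ref{N trivial smooth BC} the smooth base change along $S_y \to Y$ produces an $N$-trivial embedding of $f_y : X_y \to S_y$ with induced trivialization $\psi_y$. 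The existing transfer $(f_y)_*^{\psi_y} : \cal{F}(X_y) \to \cal{F}(S_y)$ is then identified, via Proposition \ref{V 4.24}, with a map $\cal{F}_{Zar}(X_y) \to \cal{F}_{Zar}(S_y)$. Axiom (4) of Definition \ref{PSOWT} ensures these stalk-level maps commute with further Zariski localization, so they assemble into a morphism of Zariski sheaves over $Y$. Independence of $\psi$ in the oriented case follows from Lemma \ref{ind of embedding}, applied on a small enough neighborhood where the K\"ahler differentials are trivial.

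Axioms (1)--(5) of Definition \ref{PSOWT} for $\cal{F}_{Zar}$ are local on $Y$ and so reduce to the corresponding facts for $\cal{F}$ on semilocal stalks, where $\cal{F} = \cal{F}_{Zar}$. Uniqueness of the transfer structure is then immediate: any two structures on $\cal{F}_{Zar}$ extending that of $\cal{F}$ coincide on semilocal stalks, and hence agree as morphisms of sheaves. Homotopy invariance of $\cal{F}_{Zar}$ is exactly Proposition \ref{V 4.21}, and the existence of contractions is Corollary \ref{V 4.14 -1}.

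The hard part will be the bookkeeping of $N$-trivial embeddings and their trivializations under Zariski localization, and checking that the pointwise-defined transfers glue coherently to a morphism of sheaves rather than merely assembling into a compatible family on stalks. The orientation hypothesis is essential here: without it one would need to track compatible trivializations across all stalks simultaneously, whereas orientation (via Lemma \ref{ind of embedding}) renders the transfer canonical once the neighborhood is chosen small enough to have trivial $\Omega^1$, reducing the verification to one that propagates cleanly through the semilocal identification $\cal{F}(S) = \cal{F}_{Zar}(S)$.
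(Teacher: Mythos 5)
Your proposal takes essentially the same approach as the paper's proof: reduce to the separated case via Proposition \ref{V 4.22}, use the semilocal identification $\cal{F}(S) = \cal{F}_{Zar}(S)$ from Proposition \ref{V 4.24} to transfer locally, invoke axiom (4) (compatibility with open immersions) to glue, and cite Proposition \ref{V 4.21} and Theorem \ref{V 4.14}/Corollary \ref{V 4.14 -1} for homotopy invariance and contractions. The only difference is that you phrase the construction ``stalk by stalk'' while the paper works more concretely with a finite Zariski cover $\{U_i\}$ of $Y$ (chosen so that $a|_{f^{-1}U_i}$ lifts to $\cal{F}(f^{-1}U_i)$) and glues sections; the latter is the precise form of the ``assembly'' step you flag as the hard part, so your outline is correct but should be tightened there. (The invocation of Lemma \ref{ind of embedding} is unnecessary, since orientedness of the induced transfers is inherited directly from $\cal{F}$.)
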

\begin{proof} We may assume $\cal{F}$ is separated by Proposition $\ref{V 4.22}$.  Consider a morphism $f : X \to Y$ in $\Sm / k$ along which we must construct a transfer, and an element $a \in \cal{F}_{Zar}(X)$.  Since $\cal{F}$ is separated and a finite cover of a local scheme is semilocal, by Proposition $\ref{V 4.24}$ we can find a cover $\{ U_i \}$ of $Y$ such that the restriction $a_i := a |_{f^{-1}U_i} \in \cal{F}_{Zar}(f^{-1}U_i)$ belongs to the subgroup $\cal{F}(f^{-1}U_i)$.  Let $b_i$ denote the element ${f |_{f^{-1}U_i}}_* (a_i) \in \cal{F}(U_i)$.  Since the weak transfers are compatible with the open immersions $U_{ij} \to U_i$, the elements $b_i$ and $b_j$ coincide on $U_i \cap U_j$, hence they glue to a global element $b \in \cal{F}_{Zar}(Y)$.

Homotopy invariance is the statement of Proposition $\ref{V 4.21}$.  Hence $\cal{F}_{Zar}$ has contractions by Theorem $\ref{V 4.14}$.
\end{proof}

\begin{lemma} \label{TRICAT 3.1.5} Let $k$ be a field, and let $\cal{F}$ be a presheaf of abelian groups on $\Sm / k$ with oriented weak transfers for affine varieties.  Then the Nisnevich sheafification $\cal{F}_{Nis}$ has a unique structure of a presheaf with oriented weak transfers making the canonical morphism $\cal{F} \to \cal{F}_{Nis}$ a morphism of presheaves with oriented weak transfers for affine varieties. \end{lemma}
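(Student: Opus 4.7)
The plan is to extend each weak transfer $f_*^\psi$ from sections of $\cal{F}$ to sections of $\cal{F}_{Nis}$ via local patching along a Nisnevich cover of $Y$, using \'etale compatibility (Property (4) of Definition \ref{PSOWT}) to ensure the local pieces glue.  In rough outline: given a finite, flat, generically \'etale $f:X\to Y$ with an $N$-trivial embedding and trivialization $\psi$, and given $a\in\cal{F}_{Nis}(X)$, choose a Nisnevich cover $\{V_j\to X\}$ on which $a$ is represented by sections $a_j\in\cal{F}(V_j)$.  For each point $y\in Y$, produce an \'etale neighborhood $p:U\to Y$ of $y$ with residue field isomorphism such that $X\times_Y U\to X$ factors (Zariski-locally around each point in the fiber over $y$) through the chosen cover.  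On such a $U$, the base change $f_U:X\times_Y U\to U$ is again finite, flat, and generically \'etale and inherits an $N$-trivial embedding with an induced trivialization $\psi_U$; apply the transfer of $\cal{F}$ (after passing to the Zariski refinement on which the pullback of $a$ lifts to $\cal{F}$) to produce a section of $\cal{F}_{Nis}(U)$, and verify that these glue to a global section $f_*^\psi(a)\in\cal{F}_{Nis}(Y)$.

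The geometric input required is the following henselian splitting: if $f:X\to Y$ is a finite morphism of smooth $k$-schemes, $\{V_j\to X\}$ is a Nisnevich cover, and $y\in Y$, then there exists an \'etale neighborhood $U\to Y$ of $y$ (with residue field isomorphism over a chosen lift) such that $X\times_Y U$ is a disjoint union of connected components $\coprod_i X_i$, each of which lies Zariski-locally over a single $V_{j_i}$.  This follows from the well-known fact that henselization commutes with finite morphisms: $X\times_Y\spec(\cal{O}_{Y,y}^h)$ decomposes as $\coprod_i \spec(\cal{O}_{X,x_i}^h)$ over $f^{-1}(y)=\{x_1,\ldots,x_m\}$, and by the Nisnevich hypothesis each $\cal{O}_{X,x_i}^h$ receives a map from $\cal{O}_{V_{j_i},v_i}^h$ for some chosen lift $v_i$; algebraizing yields the desired \'etale neighborhood, and an auxiliary Zariski refinement on $X\times_Y U$ handles the factorization of each $X_i$ through $V_{j_i}$.

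With this geometric lemma in hand, I would first extend weak transfers to the Nisnevich separation $\cal{F}^{sep}$.  The kernel $K=\ker(\cal{F}\to\cal{F}^{sep})$ is preserved by each $f_*^\psi$: if $a\in K(X)$ vanishes on a Nisnevich cover of $X$, the splitting lemma produces a Nisnevich cover of $Y$ on which $f_*(a)$ vanishes, via Property (4).  Applying the patching construction above to the now-separated $\cal{F}^{sep}$ defines transfers on $\cal{F}_{Nis}$, and the axioms (1)--(5) of Definition \ref{PSOWT} are verified by pulling back to a Nisnevich cover on which the relevant sections lie in $\cal{F}$ and invoking the corresponding axiom there; orientation is preserved because each transfer on $\cal{F}_{Nis}$ is built pointwise from the transfer on $\cal{F}$, with the trivialization appearing only in the input data.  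Uniqueness is forced because any compatible transfer structure must agree with the constructed one on the subpresheaf $\cal{F}$, and then on the whole sheaf by Nisnevich locality.  The main obstacle in this plan is the henselian splitting lemma: producing an \'etale neighborhood of $y\in Y$ on which $X\times_Y U$ decomposes and each piece factors through a single cover element, while preserving the Nisnevich residue-field condition, is the essential geometric input; the remaining verifications are formal bookkeeping of the axioms under refinement.
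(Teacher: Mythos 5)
Your proposal is essentially the paper's proof: both hinge on the single geometric fact that a finite scheme over a Henselian local scheme splits as a disjoint union of Henselian local schemes, and both then use additivity (condition~(1)) and compatibility with \'etale base change (condition~(4)) to transport the transfer. The paper packages this via the first term $E_{\cal F}$ of the Godement flasque resolution, which sidesteps the explicit gluing and the separate treatment of the Nisnevich separation that you carry out by hand, but the underlying geometric input and the use of the transfer axioms are the same.
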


\begin{proof} First we note $\cal{F}_{Nis}$ inherits the transfer structure from $\cal{F}$.  This follows more or less immediately from the fact that a finite cover of a Henselian local scheme is a disjoint union of Henselian local schemes.  The requirement that $\cal{F}$ be compatible with disjoint unions then determines the induced weak transfers $f_* : \cal{F}_{Nis}(X) \to \cal{F}_{Nis}(Y)$ along finite morphisms to suitably shrunken targets $Y$.  For details see the $i=0$ case of Lemma $\ref{V 5.3}$.  \end{proof}

We have the first comparison results.

\begin{theorem} \label{MVW 22.2} Let $k$ be a field of characteristic $0$, and let $\cal{F}$ be a homotopy invariant presheaf of abelian groups on $\Sm / k$ with oriented weak transfers for affine varieties.  Then $\cal{F}_{Zar} = \cal{F}_{Nis}$. \end{theorem}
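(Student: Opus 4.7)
By Proposition $\ref{V 4.25, 4.26}$, $\cal{F}_{Zar}$ itself carries a canonical structure of a homotopy invariant presheaf with oriented weak transfers for affine varieties, and because Nisnevich sheafification factors through Zariski sheafification we have ${(\cal{F}_{Zar})}_{Nis} = \cal{F}_{Nis}$.  Replacing $\cal{F}$ by $\cal{F}_{Zar}$, the claim reduces to the following: \emph{any homotopy invariant Zariski sheaf $\cal{F}$ on $\Sm/k$ with oriented weak transfers for affine varieties is automatically a Nisnevich sheaf}, i.e., the canonical map $\cal{F} \to \cal{F}_{Nis}$ is an isomorphism.

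To verify this I would check the Mayer-Vietoris sequence
\begin{equation*}
0 \to \cal{F}(X) \to \cal{F}(U) \oplus \cal{F}(X') \to \cal{F}(U \times_X X') \to 0
\end{equation*}
for every elementary Nisnevich distinguished square arising from an open immersion $j : U \into X$ and an étale $f : X' \to X$ with $f^{-1}(X-U) \xrightarrow{\sim} X-U$; by the cd-structure characterization of Nisnevich sheaves this is enough.  Left exactness is immediate from the injectivity of restriction to dense opens (Corollary $\ref{V 4.19}$).  A standard dévissage on the reduced closed complement $Z := X - U$ (stratify $Z$ by smooth locally closed subvarieties and induct on $\dim Z$) reduces the remaining exactness statements to the case in which $Z$ is a smooth divisor in $X$.

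In that case the essential input is Corollary $\ref{V 4.13}$, which supplies an isomorphism of relative Zariski sheaves $\cal{F}_{(X,Z)} \xrightarrow{\sim} \cal{F}_{(X',f^{-1}Z)}$ on $Z$.  Given a matching pair $(u,x') \in \cal{F}(U) \oplus \cal{F}(X')$, one proceeds Zariski-locally on $X$: around each closed point $z \in Z$ choose a neighborhood $V \ni z$ satisfying the conclusion of Theorem $\ref{general Nis invariance}$, which furnishes a morphism
\begin{equation*}
\psi : \cal{F}(f^{-1}V - f^{-1}(V \cap Z)) \to \cal{F}(V - V \cap Z).
\end{equation*}
Property (2) of that theorem applied to $u$ and property (3) applied to $x'$, together with the injectivity of Corollary $\ref{V 4.19}$, show that the difference $u|_{V - V \cap Z} - \psi(x'|_{f^{-1}V - f^{-1}(V \cap Z)})$ is the restriction of a unique element $a_V \in \cal{F}(V)$, and that this $a_V$ pulls back to $u$ on $V \cap U$ and to $x'|_{f^{-1}V}$ on $f^{-1}V$.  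The Zariski sheaf property of $\cal{F}$ then glues the local lifts $\{a_V\}$ into the desired global section $a \in \cal{F}(X)$.

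The hard part has already been absorbed into Theorem $\ref{general Nis invariance}$, whose proof constituted the main geometric content of Sections $\ref{smooth corr}$–$\ref{Nis section}$.  Once the map $\psi$ and the consequent excision isomorphism of Corollary $\ref{V 4.13}$ are in hand, the Mayer-Vietoris assembly above and the dévissage reducing arbitrary $Z$ to smooth divisors are essentially formal, and the theorem follows.
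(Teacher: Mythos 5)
Your approach is genuinely different from the paper's, and it has a gap in the dévissage step.

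The paper does not directly verify the Nisnevich sheaf (Mayer--Vietoris) condition for $\cal{F}_{Zar}$. Instead, using Lemma~\ref{TRICAT 3.1.5} to endow $\cal{F}_{Nis}$ with oriented weak transfers, it passes to the kernel and cokernel of $\cal{F} \to \cal{F}_{Nis}$; these are homotopy invariant presheaves with oriented weak transfers whose Nisnevich sheafification vanishes, so the theorem reduces to the implication $G_{Nis}=0 \Rightarrow G_{Zar}=0$. Via Proposition~\ref{V 4.25, 4.26} one may assume $G$ is a homotopy invariant Zariski sheaf with transfers, and then the implication is a \emph{separatedness} statement: any Nisnevich cover refines to one containing a dense Zariski open immersion (to cover the generic point), and $G(X) \to G(\text{dense open})$ is injective by Corollary~\ref{V 4.19}, so a Nisnevich-locally trivial section is already zero. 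This is why the paper never needs the gluing half of the sheaf condition, nor any reduction to special complements.

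Your route aims higher: you want to show $\cal{F}_{Zar}$ is a Nisnevich sheaf by verifying Mayer--Vietoris for every elementary distinguished square. The smooth-divisor case you handle with Theorem~\ref{general Nis invariance} is essentially correct (after unwinding, property (1) kills $\psi(x')$, property (2) and the matching condition force $u|_{V-V\cap Z}$ into the image of $\cal{F}(V)$, and Corollary~\ref{V 4.19} gives uniqueness and the compatibility $f^*a_V = x'|_{f^{-1}V}$). But the reduction of an arbitrary $Z := X - U$ to a smooth divisor is asserted as a ``standard dévissage'' without justification, and it is not formal. Two concrete obstructions: (i) even when $Z$ is smooth at $z$, it may have codimension $\geq 2$, and you cannot simply replace $Z$ by a smooth divisor $D \supset Z$ because $(X - D, X')$ is then no longer an elementary distinguished square --- $p^{-1}(D) \to D$ is merely \'etale, not an isomorphism; (ii) when $Z$ is singular at $z$, removing $Z_{\mathrm{sing}}$ produces a square over $X - Z_{\mathrm{sing}}$, but this throws away exactly the neighborhood of $z$ you need to control, and inducting on $\dim Z$ doesn't obviously recover it. The excision isomorphism of Corollary~\ref{V 4.13} is stated only for $Z_2$ a smooth divisor equal to the \emph{full} complement, and the paper's machinery never claims more. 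Unless you supply the missing reduction, the argument is incomplete; and in any case the paper's kernel/cokernel trick makes it unnecessary, since it bypasses the gluing half entirely.

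One smaller point: the short exact Mayer--Vietoris you write down (with surjectivity onto $\cal{F}(U\times_X X')$) is strictly stronger than the cd-structure sheaf condition, which only requires exactness at $\cal{F}(X)$ and at $\cal{F}(U)\oplus \cal{F}(X')$. Your construction addresses the sheaf condition but would not by itself give the surjectivity; this does not affect the theorem, which only asserts $\cal{F}_{Zar} = \cal{F}_{Nis}$, but it is worth being precise about which statement you are trying to prove.
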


\begin{proof} Since we have Lemma $\ref{TRICAT 3.1.5}$, we can follow the argument of \cite[22.2]{MVW}: the kernel and cokernel of the presheaf map $\cal{F} \to \cal{F}_{Nis}$ have the same transfer structure and have trivial Nisnevich sheafification.  It suffices to show they have trivial Zariski sheafification, so we may assume $\cal{F}_{Nis} = 0$.  By Proposition $\ref{V 4.25, 4.26}$ we have that $\cal{F}_{Zar}$ is homotopy invariant with oriented weak transfers for affine varieties, and for any presheaf we have $\cal{F}_{Nis} = {(\cal{F}_{Zar})}_{Nis}$, hence we may assume $\cal{F}$ is a Zariski sheaf.

So we need that  $\cal{F}(S) =0$ for $S$ local.  But Theorem $\ref{general Nis invariance}$ implies that a section which is trivialized by a Nisnevich cover must already be Zariski locally trivial. \end{proof}

\begin{corollary} \label{MVW 23.12} Let $k$ be a field of characteristic $0$.  Then Theorem $\ref{V 4.14}$ and Corollary $\ref{V 4.14 -1}$ hold with the Zariski topology replaced by the Nisnevich topology. \end{corollary}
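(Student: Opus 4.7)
The plan is to deduce the Nisnevich versions directly from the Zariski versions by invoking Theorem~\ref{MVW 22.2}, which asserts that the Zariski and Nisnevich sheafifications coincide for any homotopy invariant presheaf with oriented weak transfers for affine varieties.  Thus the task reduces to identifying, for each sheaf appearing in Theorem~\ref{V 4.14} and Corollary~\ref{V 4.14 -1}, an underlying presheaf with this transfer structure, so that Theorem~\ref{MVW 22.2} bridges the two topologies.

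First I would verify that $\cal{F}_{-1}$, defined by $X \mapsto \coker(\cal{F}(X \times \A) \to \cal{F}(X \times (\A - 0)))$, inherits from $\cal{F}$ the structure of a homotopy invariant presheaf with oriented weak transfers for affine varieties.  The functors $- \times \A$ and $- \times (\A - 0)$ are smooth base changes which preserve finite morphisms and $N$-trivial embeddings (by Lemma~\ref{N trivial smooth BC} and Lemma~\ref{N trivial A1 stable}), so condition~(4) of Definition~\ref{PSOWT} produces compatible transfers on $\cal{F}(X \times \A)$ and $\cal{F}(X \times (\A - 0))$, which descend to the cokernel.  Orientability and homotopy invariance pass to $\cal{F}_{-1}$ immediately.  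Applying Theorem~\ref{MVW 22.2} to $\cal{F}_{-1}$ gives $(\cal{F}_{-1})_{Zar} = (\cal{F}_{-1})_{Nis}$.  By the same reasoning, the presheaf underlying $\cal{F}_{(X_2, Z_2)}$ -- namely the presheaf on $Z_2$ whose value on an open $V$ is $\varinjlim_{U} \cal{F}(U - U \cap Z_2)/\cal{F}(U)$ over open neighborhoods $U$ of $V$ in $X_2$ -- inherits oriented weak transfers (using conditions~(3) and~(4) of Definition~\ref{PSOWT} for the divisor embedding and the open restrictions), so its Zariski and Nisnevich sheafifications agree as well.

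With these identifications in hand, the chain of isomorphisms in Theorem~\ref{V 4.14} and Corollary~\ref{V 4.14 -1} is really an identification of presheaves up to sheafification; since each sheafification step yields the same result for the Zariski and Nisnevich topologies, the isomorphisms are equally valid as isomorphisms of Nisnevich sheaves on $((U \cap Z) \times Y)_{Nis}$.  The only genuine obstacle is the bookkeeping that the quotient and colimit presheaves entering the definitions of $\cal{F}_{(X,Z)}$ and $\cal{F}_{-1}$ truly carry oriented weak transfers; this amounts to checking that the base changes involved (smooth projections, open immersions, and smooth divisor embeddings) fall under conditions~(3)--(4) of Definition~\ref{PSOWT}, which is direct.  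No new geometric construction is required.
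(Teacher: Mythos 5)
Your overall strategy --- bridge the Zariski and Nisnevich versions via Theorem~\ref{MVW 22.2} --- is the intended one, and the verification that $\cal{F}_{-1}$ inherits the structure of a homotopy invariant presheaf with oriented weak transfers on $\Sm/k$ is correct and is exactly the observation the paper makes (cf.\ the proof of Proposition~\ref{MVW 23.5}).  So the identification $(\cal{F}_{-1})_{Zar} = (\cal{F}_{-1})_{Nis}$ is solid.

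Where your argument has a gap is in the treatment of $\cal{F}_{(X_2,Z_2)}$.  You assert that ``the presheaf underlying $\cal{F}_{(X_2,Z_2)}$ inherits oriented weak transfers, so its Zariski and Nisnevich sheafifications agree'' by ``the same reasoning,'' i.e., by Theorem~\ref{MVW 22.2}.  But Theorem~\ref{MVW 22.2} is a statement about presheaves \emph{on the big site $\Sm/k$}; the presheaf $V \mapsto \varinjlim_U \cal{F}(U - U \cap Z_2)/\cal{F}(U)$ lives on the small site of $Z_2$, and Definition~\ref{PSOWT} does not equip it with a transfer structure to which Theorem~\ref{MVW 22.2} applies.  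Invoking conditions~(3)--(4) of Definition~\ref{PSOWT} does not produce, on this small-site presheaf, the kind of structure the theorem requires.  The correct route for this side of the isomorphism is \emph{not} to put transfers on $\cal{F}_{(X,Z)}$, but to use what has already been proved: Corollary~\ref{V 4.13} says $\cal{F}_{(X,Z)}$ satisfies Nisnevich excision, and Corollary~\ref{V 4.14 -1} says its Zariski sheafification is (the restriction of) $(\cal{F}_{-1})_{Zar}$, which you have just shown equals $(\cal{F}_{-1})_{Nis}$.  Since the Zariski sheaf $\cal{F}_{(X,Z)}$ is therefore already a Nisnevich sheaf, its Zariski and Nisnevich sheafifications coincide, and the Zariski isomorphisms of Theorem~\ref{V 4.14} and Corollary~\ref{V 4.14 -1} promote automatically to Nisnevich ones.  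Replacing your appeal to a (nonexistent) transfer structure on $\cal{F}_{(X,Z)}$ by this appeal to Corollaries~\ref{V 4.13} and \ref{V 4.14 -1} closes the gap; the rest of your reasoning then goes through.
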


\begin{proposition}[application to contractions] \label{MVW 23.5} Let $k$ be a field of characteristic $0$, and let $\cal{F}$ be a homotopy invariant presheaf of abelian groups on $\Sm / k$ with oriented weak transfers for affine varieties.  Then ${(\cal{F}_{Nis})}_{-1} \cong {(\cal{F}_{-1})}_{Nis}$. \end{proposition}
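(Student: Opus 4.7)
The plan is to reduce the statement to an isomorphism on sections over fields, via Corollary $\ref{V 4.20}$.

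First I would observe that, by combining Proposition $\ref{V 4.25, 4.26}$ with Theorem $\ref{MVW 22.2}$, the Nisnevich sheaf $\cal{F}_{Nis}$ is itself a homotopy invariant presheaf with oriented weak transfers for affine varieties. For any homotopy invariant $\cal{G}$, the $k$-point $1 \in \A - 0$ together with homotopy invariance yields a natural kernel description: with $q : X \times (\A - 0) \to X$ the projection, $i_1 : X \hookrightarrow X \times (\A - 0)$ the section at $1$, and $j : X \times (\A - 0) \hookrightarrow X \times \A$, the identity $i_1^\ast \circ q^\ast = \id$ and the isomorphism $p^\ast : \cal{G}(X) \cong \cal{G}(X \times \A)$ together give
$$\cal{G}_{-1}(X) = \coker\bigl(j^\ast : \cal{G}(X \times \A) \to \cal{G}(X \times (\A - 0))\bigr) \;\cong\; \ker\bigl(i_1^\ast : \cal{G}(X \times (\A - 0)) \to \cal{G}(X)\bigr).$$
The products $f \times \id_{\A - 0}$ admit $N$-trivial embeddings whenever $f$ does, so each axiom of Definition $\ref{PSOWT}$ for $\cal{G}$ translates into a corresponding axiom for $\cal{G}_{-1}$. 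In particular both $\cal{F}_{-1}$ and $(\cal{F}_{Nis})_{-1}$ are homotopy invariant presheaves with oriented weak transfers for affine varieties.

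Second, since $X \mapsto X \times (\A - 0)$ sends Nisnevich covers to Nisnevich covers, the assignment $X \mapsto \cal{F}_{Nis}(X \times (\A - 0))$ is itself a Nisnevich sheaf; the kernel description then exhibits $(\cal{F}_{Nis})_{-1}$ as the kernel of a morphism of Nisnevich sheaves, hence as a Nisnevich sheaf. Therefore the natural map $\cal{F}_{-1} \to (\cal{F}_{Nis})_{-1}$ factors through a morphism $\varphi : (\cal{F}_{-1})_{Nis} \to (\cal{F}_{Nis})_{-1}$ of Nisnevich sheaves, compatible with the induced oriented weak transfer structure.

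Third, I would invoke Corollary $\ref{V 4.20}$ (combined with Theorem $\ref{MVW 22.2}$ identifying Zariski and Nisnevich sheafifications for such presheaves) to reduce the claim that $\varphi$ is an isomorphism to showing $\varphi(\spec E)$ is an isomorphism for every field extension $E \supset k$. Via the kernel descriptions, this amounts to the pair of identifications $\cal{F}(\spec E) = \cal{F}_{Nis}(\spec E)$ and $\cal{F}(\A_E - 0) = \cal{F}_{Nis}(\A_E - 0)$. The first follows from Proposition $\ref{V 4.24}$ passing to the filtered colimit of smooth semilocal approximations to $\spec E$; the second is the content of Proposition $\ref{V 5.4}$ applied after base change to $\A_E$, expressed as a filtered limit of $\A_U - 0$ for smooth affine models $U$ of subfields of $E$.

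The main obstacle is the base change of Proposition $\ref{V 5.4}$ from $\A_k$ to $\A_E$: that proposition is proved over the ground field $k$, so extending to $\A_E$ for arbitrary $E \supset k$ (not necessarily of finite type over $k$) requires running the argument with $k$ replaced by a finite-type subfield or smooth model, and then passing to the filtered colimit using continuity of both $\cal{F}$ and $\cal{F}_{Nis}$ along open immersions. A secondary but routine check is the inheritance by $\cal{G}_{-1}$ of the full transfer structure of Definition $\ref{PSOWT}$; each axiom for $\cal{G}$ pulls back along $- \times \id_{\A - 0}$ with minor bookkeeping since the relevant normal bundles are unchanged.
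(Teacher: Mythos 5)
Your argument is correct and is essentially the paper's own proof, up to cosmetic rephrasing: the paper constructs the same comparison map $(\cal{F}_{-1})_{Nis} \to (\cal{F}_{Nis})_{-1}$ and reduces to field sections by applying Corollary \ref{MVW 11.2} to its kernel and cokernel (the content of your Corollary \ref{V 4.20}), then invokes Theorem \ref{MVW 22.2} together with Theorem \ref{MVW 22.4} to get $\cal{F}(\A_E - 0)/\cal{F}(\A_E) = \cal{F}_{Nis}(\A_E - 0)/\cal{F}_{Nis}(\A_E)$, which is the same chain as your appeal to Proposition \ref{V 5.4}. The one point you elaborate beyond the paper is the filtered-colimit passage needed to apply the affine-line computations over $\A_E$ rather than $\A_k$; the paper leaves this implicit, so your extra care there is reasonable rather than a divergence.
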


\begin{proof} Proposition $\ref{MVW 22.1}$ and Theorem $\ref{MVW 22.2}$ together imply that $\cal{F}_{Nis}$ is homotopy invariant with oriented weak transfers.  The presheaf $\cal{F}_{-1}$ inherits homotopy invariance and the transfer structure, since if $f : X \to Y$  admits an $N$-trivial embedding then so does $f \times \id_T : X \times T \to Y \times T$.  (We use this for $T = \A, \A - 0$.)  Thus we have a canonical map of homotopy invariant presheaves with oriented weak transfers ${(\cal{F}_{-1})}_{Nis} \to {(\cal{F}_{Nis})}_{-1}$.  To apply Corollary $\ref{MVW 11.2}$ to the kernel and cokernel of this map, we need to show that $\cal{F}_{-1}(\spec E) = {(\cal{F}_{Nis})}_{-1}(\spec E)$ for all fields $E \supset k$.  In other words, we need $\cal{F}(\A_E  - 0) / \cal{F}(\A_E) = \cal{F}_{Nis}(\A_E - 0) / \cal{F}_{Nis}(\A_E)$, which follows from Theorem $\ref{MVW 22.2}$ and Theorem $\ref{MVW 22.4}$. \end{proof}

Now we can follow \cite[Lect.~24]{MVW}.

\begin{lemma} \label{V 5.3} \label{MVW 13.4} Let $k$ be a field, and let $\cal{F}$ be a Nisnevich sheaf of abelian groups on $\Sm / k$ with oriented weak transfers for affine varieties.  Then the cohomology presheaves $H^n_{Nis}(- \cal{F})$ have oriented weak transfers. \end{lemma}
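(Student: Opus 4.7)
The strategy is to mimic the Godement-resolution proof of \cite[Thm.~13.4]{MVW}, building a flasque resolution $\cal{F} \to G^\bullet\cal{F}$ in Nisnevich sheaves on $\Sm/k$ each of whose terms (and face maps) carries the structure of oriented weak transfers, and then descending these transfers to cohomology. Since the cohomology of a flasque resolution computes Nisnevich cohomology, this will give the desired transfers on $H^n_{Nis}(-,\cal{F})$.

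The first step is to extend $\cal{F}$ and its transfer structure to Henselian local schemes. For a Nisnevich point $y$ of $Y \in \Sm/k$, I write $Y_y^h := \spec \cal{O}_{Y,y}^h$ and define $\cal{F}(Y_y^h) := \varinjlim_{U \to Y} \cal{F}(U)$, the filtered colimit over étale neighborhoods of $y$. For $f : X \to Y$ finite flat and generically étale, admitting an $N$-trivial embedding, the base change $X \times_Y Y_y^h$ decomposes as a finite disjoint union $\coprod_{x \mapsto y} X_x^h$ of Henselian local schemes finite over $Y_y^h$. By property (4) of Definition \ref{PSOWT} (smooth base-change compatibility), each étale step in the filtered system defining $Y_y^h$ propagates the $N$-trivial embedding of $f$, so the transfer $\cal{F}(X_x^h) \to \cal{F}(Y_y^h)$ is the colimit of transfers along étale neighborhoods and is orientation-independent by hypothesis on $\cal{F}$. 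Applying property (1) (additivity) gives $f_\ast : \cal{F}(X \times_Y Y_y^h) = \bigoplus_x \cal{F}(X_x^h) \to \cal{F}(Y_y^h)$.

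Next, following \cite[13.1]{MVW}, define the Godement functor by $G\cal{F}(U) := \prod_{u \in U} \cal{F}(U_u^h)$ and iterate to obtain a cosimplicial flasque resolution $G^\bullet\cal{F}$. For $f : X \to Y$ admitting an $N$-trivial embedding, the induced map $G\cal{F}(X) \to G\cal{F}(Y)$ is defined on the $y$-component as the composition of projection to $\prod_{x \mapsto y} \cal{F}(X_x^h)$ with the transfer constructed above; iterating stalkwise gives transfers on each $G^n\cal{F}$. Flasqueness of $G^\bullet\cal{F}$ follows as in \cite[Lemma 13.3]{MVW} (the Nisnevich stalk of a product of pushforwards from Henselian locals is a direct summand, hence has vanishing higher Nisnevich cohomology), so $H^n_{Nis}(U,\cal{F}) = H^n(G^\bullet\cal{F}(U))$, and the chain-level transfers descend to cohomology.

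The verification of Definition \ref{PSOWT} (1)--(5) for $H^n_{Nis}(-,\cal{F})$ reduces, by naturality, to the same properties for $G^\bullet\cal{F}$, which in turn reduce to stalkwise properties of $\cal{F}$ itself. Additivity and the section condition are immediate from the stalk-by-stalk definition; the compatibility with addition of irrelevant summands (5) is clear since it already holds termwise for $\cal{F}$; and base-change compatibility for smooth maps (4) uses that Henselization commutes with smooth base change. Orientation-independence is preserved because at every stalk the transfer is built from orientation-independent transfers of $\cal{F}$. The main subtlety --- and the step I expect to require the most care --- is condition (3), compatibility with pullback along an embedding of a principal smooth divisor $g : Y' \hookrightarrow Y$: principality need not persist at the Henselian stalks of $Y'$, but Nisnevich-locally (and hence étale-locally on the Henselian neighborhoods appearing in the Godement resolution) the divisor $Y'$ is still defined by a single equation, so condition (3) for $\cal{F}$ applies to each term in the stalkwise product and assembles to the required commutativity for $G^\bullet\cal{F}$, hence for $H^n_{Nis}(-,\cal{F})$.
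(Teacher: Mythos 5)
Your proof takes essentially the same approach as the paper: both build the Godement flasque resolution $E_{\cal{F}}(X) = \prod_{x\in X}\cal{F}(\spec \cal{O}^h_{X,x})$, exploit the splitting of a finite morphism over a Henselian local base into a finite disjoint union of Henselian local schemes to define stalkwise oriented weak transfers on each term of the resolution, and then let these chain-level transfers descend to the cohomology presheaves. The paper is terser --- it does not spell out the colimit extension of $\cal{F}$ and its transfers to Henselian local schemes, nor the verification of conditions (1)--(5), in the detail you provide --- but the central construction is identical.
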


\begin{proof} Following \cite[Ex.~6.20]{MVW}, we observe the canonical flasque resolution of $\cal{F}$ consists of terms which may be endowed with oriented weak transfers.  More precisely, $\cal{F}$ admits a canonical injection into the sheaf $E_\cal{F}$ whose value on $X \in \Sm / k$ is $\prod_{x \in X} \cal{F} (\spec \cal{O}^h_{X,x})$, i.e., the product of the values of $\cal{F}$ on the Hensel local rings of the closed points in $X$.  (Then we take the cokernel and repeat.)  Given a finite flat morphism $f : X \to Y$ along which $E_\cal{F}$ should have an oriented weak transfer, the fiber $f^{-1}(\spec \cal{O}^h_{Y,y})$ splits as $\prod_{x \in f^{-1}(y)} \spec \cal{O}^h_{X,x}$.  Therefore we may identify $E_\cal{F}(X) = \prod_{x \in X} \cal{F} (\spec \cal{O}^h_{X,x}) = \prod_{y \in Y} \prod_{x \in f^{-1}(y)} \cal{F}(\spec \cal{O}^h_{X,x})$.  To define the morphism $f_* : E_\cal{F}(X) \to E_\cal{F}(Y)$, it suffices to define for every $y \in Y$ a morphism $\prod_{x \in f^{-1}(y)} \cal{F}(\spec \cal{O}^h_{X,x}) \to \cal{F}(\spec \cal{O}^h_{Y,y})$, and for this we take the sum of the weak transfers $f_* : \cal{F}(\spec \cal{O}^h_{X,x}) \to \cal{F}(\spec \cal{O}^h_{Y,y})$. \end{proof}

\begin{remark} Since the canonical flasque resolution can be constructed using only affine varieties, the Nisnevich cohomology of $\cal{F}$ has oriented weak transfers even if $\cal{F}$ itself has such transfers only for affine varieties.  Also note Lemma \ref{V 5.3} fails for the Zariski topology: we have a restriction morphism $\cal{F}(f^{-1}(\spec \cal{O}_{Y,y})) \to \prod_{x \in f^{-1}(y)} \cal{F} (\spec \cal{O}_{X,x})$ and a weak transfer $f_* : \cal{F}(f^{-1}(\spec \cal{O}_{Y,y})) \to \cal{F}(\spec \cal{O}_{Y,y})$, but there is no reason the morphism $f_*$ should extend to $\prod_{x \in f^{-1}(y)} \cal{F} (\spec \cal{O}_{X,x})$.  The lemma holds for the Zariski topology with the homotopy invariance hypothesis (in characteristic zero) by Proposition $\ref{MVW 22.1}$. \end{remark}

\begin{corollary} \label{cohomology is homotopy invariant} Let $k$ be a field of characteristic $0$, and let $\cal{F}$ be a homotopy invariant presheaf of abelian groups on $\Sm / k$ with oriented weak transfers for affine varieties.  Then $H^n_{Nis}(-, \cal{F}_{Nis})$ is a homotopy invariant presheaf with oriented weak transfers. \end{corollary}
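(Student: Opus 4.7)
The corollary splits into two parts. The transfer assertion is essentially free: by Proposition \ref{MVW 22.1} together with Theorem \ref{MVW 22.2} and Lemma \ref{TRICAT 3.1.5}, the Nisnevich sheafification $\cal{F}_{Nis}=\cal{F}_{Zar}$ is a Nisnevich sheaf carrying oriented weak transfers for affine varieties; applying Lemma \ref{V 5.3} then endows each presheaf $H^n_{Nis}(-,\cal{F}_{Nis})$ with oriented weak transfers for affine varieties.

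For homotopy invariance my plan is to use the Leray spectral sequence
$$E_2^{s,t}=H^s_{Nis}(X,R^tp_*\cal{F}_{Nis})\Longrightarrow H^{s+t}_{Nis}(X\times\A,\cal{F}_{Nis})$$
associated to the projection $p:X\times\A\to X$. I will verify the two collapse hypotheses: $p_*\cal{F}_{Nis}\cong\cal{F}_{Nis}$ and $R^tp_*\cal{F}_{Nis}=0$ for $t>0$. The first is homotopy invariance of the sheaf $\cal{F}_{Nis}$ itself, which follows from Proposition \ref{V 4.21} combined with Theorem \ref{MVW 22.2} after passing to Nisnevich stalks. Once both are established, the spectral sequence degenerates at $E_2$ and produces a natural isomorphism $H^n_{Nis}(X,\cal{F}_{Nis})\cong H^n_{Nis}(X\times\A,\cal{F}_{Nis})$, which is exactly the desired homotopy invariance. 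The Nisnevich stalk of $R^tp_*\cal{F}_{Nis}$ at a point $x\in X$ is computed, by continuity of Nisnevich cohomology, as $H^t_{Nis}(\A_R,\cal{F}_{Nis})$ with $R=\cal{O}^h_{X,x}$; a further standard limit argument reduces the problem to showing $H^t_{Nis}(\A_S,\cal{F}_{Nis})=0$ for $t>0$ and $S$ an arbitrary smooth semilocal $k$-scheme.

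The main obstacle is precisely this semilocal cohomology vanishing. My plan to establish it is to imitate the proof of Proposition \ref{V 5.4} (which treats the case $S=\spec k$) over a general smooth semilocal base: substitute the semilocal Mayer--Vietoris sequence (Corollary \ref{semilocal MV2}) for the Mayer--Vietoris sequence over a field (Theorem \ref{open affine line MV}), invoke the Nisnevich excision results of Section \ref{Nis section}, in particular the contraction identification of Corollary \ref{MVW 23.12}, and induct on $\dim S$. Equivalently one may appeal to the Gersten resolution (Theorem \ref{Gersten}), whose terms are pushforwards from generic points of subvarieties and therefore have vanishing higher Nisnevich cohomology on smooth semilocal schemes. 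Either route, once completed, closes the Leray argument and simultaneously yields the homotopy invariance and (together with Lemma \ref{V 5.3}) the oriented weak transfer structure on each $H^n_{Nis}(-,\cal{F}_{Nis})$.
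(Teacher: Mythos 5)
Your outline agrees with the paper's in its broad architecture: deduce transfers on $H^n_{Nis}(-,\cal{F}_{Nis})$ from Lemma \ref{V 5.3} after endowing $\cal{F}_{Nis}$ with transfers, and attack homotopy invariance via the Leray spectral sequence for $p:X\times\A\to X$, reducing to vanishing of the stalks of $R^tp_*\cal{F}_{Nis}$. That matches the \cite[24.1--24.4]{MVW} skeleton the paper follows. The transfer half of your argument is correct. However, the crucial middle step of the homotopy-invariance half has a genuine gap, and one of your two proposed fallbacks is circular.

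First, the circularity: you cannot appeal to the Gersten resolution (Theorem \ref{Gersten}). The paper explicitly builds the Gersten resolution out of Corollary \ref{cohomology is homotopy invariant} (it is listed as one of the four inputs furnishing the analogues of \cite[24.1, 11.1, 22.7]{MVW}). Using Gersten here would be a loop.

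Second, and more substantively, ``imitate Proposition \ref{V 5.4} over a semilocal base'' does not work as stated. That proof relies, at the final step, on the fact that $U\subset\A_k$ is a \emph{curve}, so after handling $H^0$ and $H^1$ there is nothing left. For $S$ a smooth local $k$-scheme of positive dimension, $\A_S$ has dimension $\dim S + 1$ and you must control all of $H^1,\dots,H^{\dim S+1}$; the curve shortcut disappears. Likewise, Corollary \ref{semilocal MV2} is a Mayer--Vietoris statement for Zariski covers of \emph{semilocal} schemes, but $\A_S$ is not semilocal once $\dim S>0$, so it cannot be substituted wholesale for Theorem \ref{open affine line MV} in that proof. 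What actually carries the induction on $\dim S$ in both \cite{MVW} and the paper is the more delicate injectivity statement of \cite[24.3]{MVW}: for a dense open $j:U\hookrightarrow S$ one must show the composite
$$H^n_{Nis}(S\times\A,\cal{F}) \xrightarrow{\ \tau\ } H^n_{Nis}(S\times\A,j_*j^*\cal{F}) \xrightarrow{\ \eta\ } H^n_{Nis}(U\times\A,j^*\cal{F})$$
is injective, with $\tau$ controlled by Corollary \ref{MVW 22.8} and the identification of the cokernel of $\cal{F}\to j_*j^*\cal{F}$ as a contraction (Corollaries \ref{MVW 23.12} and \ref{MVW 23.5}), and $\eta$ controlled similarly. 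You name Corollary \ref{MVW 23.12} and ``induct on $\dim S$'' but do not set up this injectivity, which is where the proof actually lives; without it the Leray argument does not close.

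Finally, a small but real inaccuracy: Nisnevich stalks of $R^tp_*\cal{F}_{Nis}$ are Henselian local rings, which are filtered colimits of \emph{local} rings of smooth schemes along Nisnevich neighborhoods, so the reduction is to smooth local $S$, not arbitrary smooth semilocal $S$. It is harmless to strengthen the claim, but it is not what the limit argument gives you, and it obscures where the dimension induction has to start.
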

\begin{proof} The proof of \cite[24.1-4]{MVW} goes through: the arguments are sheaf-theoretic and use properties of presheaves with transfers that we have verified extend to presheaves with oriented weak transfers.  The case $n=0$ follows from Proposition $\ref{V 4.25, 4.26}$ and Theorem $\ref{MVW 22.2}$.  Hence we are reduced to \cite[24.2]{MVW} by the same Leray spectral sequence argument.

The proof of \cite[24.2]{MVW} goes through in our setting since we have the computation of the Nisnevich cohomology for open subschemes of $\A_E$ by Proposition $\ref{MVW 22.7}$.  Thus we are reduced to proving \cite[24.3]{MVW} in our setting; copying notation, we need to show the composition
$$H^n_{Nis}(S \times \A, \cal{F}) \xrightarrow{\tau} H^n_{Nis}(S \times \A, j_* j^* \cal{F}) \xrightarrow{\eta} H^n_{Nis}(U \times \A, j^* \cal{F})$$
is injective.  The proof that $\eta$ is injective applies since we have Proposition $\ref{MVW 23.5}$ and Corollary $\ref{MVW 23.12}$.  The proof that $\tau$ is injective also works in our setting.  Corollary $\ref{MVW 22.8}$ gives the injectivity $\cal{F} \to j_* j^* \cal{F}$, and Corollary $\ref{MVW 23.12}$ allows us to identify the cokernel as a contraction.  Finally, the argument for \cite[24.4]{MVW} is sheaf-theoretic. \end{proof}

\begin{corollary} \label{Zar = Nis} Let $k$ be a field of characteristic $0$, and let $\cal{F}$ be a homotopy invariant presheaf of abelian groups on $\Sm / k$ with oriented weak transfers for affine varieties.  Then $H^n_{Nis}( - \cal{F}_{Nis}) = H^n_{Zar}( - \cal{F}_{Zar})$ for all $n$. \end{corollary}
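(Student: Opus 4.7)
The plan is to run the Leray spectral sequence for the natural morphism of sites $\pi : {(\Sm/k)}_{Nis} \to {(\Sm/k)}_{Zar}$ applied to the sheaf $\cal{F}_{Nis}$, and to show that all higher direct images $R^q\pi_* \cal{F}_{Nis}$ vanish for $q > 0$. Since $R^q\pi_* \cal{F}_{Nis}$ is by definition the Zariski sheafification of the presheaf $\cal{H}^q : U \mapsto H^q_{Nis}(U, \cal{F}_{Nis})$, and since $\pi_* \cal{F}_{Nis}$ equals $\cal{F}_{Nis}$ viewed as a Zariski sheaf, which by Theorem \ref{MVW 22.2} coincides with $\cal{F}_{Zar}$, this will collapse the spectral sequence to the isomorphism $H^n_{Zar}(-, \cal{F}_{Zar}) \cong H^n_{Nis}(-, \cal{F}_{Nis})$.

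First I would apply Corollary \ref{cohomology is homotopy invariant} to the presheaf $\cal{H}^q$: it is homotopy invariant and carries oriented weak transfers for affine varieties, so the entire toolkit of the paper (in particular Corollaries \ref{semilocal inj} and \ref{MVW 11.2}) applies to it. Next I would observe that for any field extension $E \supset k$, the scheme $\spec E$ is a terminal object for the Nisnevich topology: every Nisnevich cover of $\spec E$ must contain a morphism which is an isomorphism at the generic point, and hence admits $\id_{\spec E}$ as a refinement. Consequently $H^q_{Nis}(\spec E, \cal{F}_{Nis}) = 0$ for all $q > 0$, so $\cal{H}^q(\spec E) = 0$ for every field $E \supset k$ and every $q > 0$.

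Now Corollary \ref{MVW 11.2} (which we are entitled to invoke by the previous paragraph) forces $(\cal{H}^q)_{Zar} = 0$ for $q > 0$; that is, $R^q\pi_* \cal{F}_{Nis} = 0$ for $q > 0$. The Leray spectral sequence
\[ E_2^{p,q} = H^p_{Zar}(X, R^q\pi_* \cal{F}_{Nis}) \Longrightarrow H^{p+q}_{Nis}(X, \cal{F}_{Nis}) \]
therefore degenerates at $E_2$ to yield $H^n_{Zar}(X, \pi_* \cal{F}_{Nis}) \cong H^n_{Nis}(X, \cal{F}_{Nis})$ for all $n$ and all $X \in \Sm/k$, and the identification $\pi_* \cal{F}_{Nis} = \cal{F}_{Zar}$ (from Theorem \ref{MVW 22.2}) concludes the argument.

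The main obstacle was really the preparation: to run this short argument one needs both the transfer/homotopy-invariance inheritance of $\cal{H}^q$ (Corollary \ref{cohomology is homotopy invariant}) and the semilocal injectivity tool (Corollary \ref{MVW 11.2}), each of which rests on the geometric constructions developed earlier in the paper. Once those are in hand, the deduction is a formal Leray spectral sequence computation, with the vanishing of Nisnevich cohomology of $\spec E$ being the only extra input.
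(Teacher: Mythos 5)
Your proposal is correct and coincides with the argument the paper (following \cite[Lect.~24]{MVW}) has in mind: run the Leray spectral sequence for the change-of-topology morphism, use Corollary \ref{cohomology is homotopy invariant} to equip the presheaves $\cal{H}^q$ with the structure of homotopy invariant presheaves with oriented weak transfers, observe that they vanish on (spectra of) fields because a field is a point for the Nisnevich topology, and invoke Corollary \ref{MVW 11.2} to conclude $R^q\pi_*\cal{F}_{Nis}=0$ for $q>0$. The only minor point worth flagging is that $\cal{H}^q(\spec E)$ for $E$ a function field is defined in the paper as a filtered colimit over finite-type models, so one should appeal to the standard continuity of Nisnevich cohomology under such filtered limits to identify it with $H^q_{Nis}$ of the henselian local scheme $\spec E$; this is routine but deserves a word.
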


\noi \textbf{The Gersten resolution.} We have developed enough machinery that we can also imitate the construction of the Gersten resolution.  Corollaries \ref{cohomology is homotopy invariant}, \ref{semilocal inj}, \ref{V 4.15}, and \ref{Zar = Nis} provide us with the analogues of \cite[24.1, 11.1, 22.7]{MVW}.  Theorem \ref{V 4.14} is the analogue of \cite[23.12]{MVW}, and \cite[2.10, 23.4, 23.7]{MVW} are sheaf-theoretic.

\begin{theorem} \label{Gersten} Let $k$ be a field of characteristic $0$, and let $\cal F$ be a homotopy invariant presheaf of abelian groups on $\Sm / k$ with oriented weak transfers for affine varieties.  Let $X \in \Sm / k$ be a smooth $k$-scheme of dimension $n$, and let $X^{(i)}$ denote the set of codimension $i$ points of $X$.  Then there is a canonical exact sequence on $X_{Zar}$:

$$0 \to \cal{F} \to  \bigoplus_{{x \in X^{(0)}}} {i_x}_* (\cal{F}) \to \displaystyle \bigoplus_{x \in X^{(1)}} {i_x}_* (\cal{F}_{-1}) \to \cdots \to \displaystyle \bigoplus_{x \in X^{(n)}} {i_x}_* (\cal{F}_{-n}) \to 0.$$
\end{theorem}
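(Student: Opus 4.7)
The plan is to follow \cite[Lect.~24]{MVW}, since each required input is now in place: the homotopy invariance of $H^n_{Nis}(-,\cal{F}_{Nis})$ (Corollary \ref{cohomology is homotopy invariant}), the coincidence of Zariski and Nisnevich cohomology (Corollary \ref{Zar = Nis}), the semilocal injectivity (Corollary \ref{semilocal inj}), the computation of $H^*_{Nis}(U,\cal{F}_{Nis})$ for $U \subset \A$ (Proposition \ref{MVW 22.7}), the Nisnevich purity/contraction isomorphism (Theorem \ref{V 4.14} together with Corollary \ref{MVW 23.12}), and the fact that $\cal{F}_{-1}$ inherits the standing hypotheses (Proposition \ref{MVW 23.5}).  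As the remark preceding the theorem states, the other ingredients \cite[2.10, 23.4, 23.7]{MVW} are purely sheaf-theoretic and import verbatim.

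First I would form the coniveau (Cousin) complex $C^\bullet(\cal{F}_{Nis})$ on $X_{Zar}$, whose degree $p$ term on an open $U$ is $\bigoplus_{x \in U^{(p)}} H^p_x(\spec \cal{O}^h_{U,x}, \cal{F}_{Nis})$, with differentials arising from the coniveau filtration on Nisnevich cohomology.  The Gersten resolution then reduces to two claims: (i) for $x \in X^{(p)}$ the local cohomology $H^q_x(\spec \cal{O}^h_{X,x}, \cal{F}_{Nis})$ vanishes for $q \neq p$; and (ii) for $q = p$ it is canonically isomorphic to $\cal{F}_{-p}(\kappa(x))$.  Claim (ii) identifies the $p$-th term of $C^\bullet$ with $\bigoplus_{x \in X^{(p)}} (i_x)_* \cal{F}_{-p}$, while (i) collapses the coniveau spectral sequence
$$E_1^{p,q} = \bigoplus_{x \in X^{(p)}} H^{p+q}_x(\spec \cal{O}^h_{X,x}, \cal{F}_{Nis}) \Rightarrow H^{p+q}_{Nis}(X, \cal{F}_{Nis})$$
to the single row $q = 0$, showing $C^\bullet(\cal{F}_{Nis})$ is a resolution of $\cal{F}_{Nis}$, hence of $\cal{F}$ by Theorem \ref{MVW 22.2}.

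Both (i) and (ii) are proved by induction on $p$.  At a point $x \in X^{(p)}$ I would choose a regular parameter whose zero locus is a smooth codimension-one closed subscheme $Z \ni x$; Corollary \ref{MVW 23.12} then supplies a purity isomorphism on $Z_{Nis}$ in a neighborhood of $x$, whose long exact sequence for cohomology with supports reduces the computation of $H^*_x(\spec \cal{O}^h_{X,x}, \cal{F}_{Nis})$ to that of $H^{*-1}_x(\spec \cal{O}^h_{Z,x}, (\cal{F}_{-1})_{Nis})$.  Proposition \ref{MVW 23.5} ensures that $\cal{F}_{-1}$ again satisfies our standing hypotheses, so after $p$ iterations the induction descends to a tautological statement on the residue field $\kappa(x)$.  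The main obstacle is the bookkeeping required to verify that at every inductive stage the iterated contraction $\cal{F}_{-j}$ is entitled to purity in the relative form of Theorem \ref{V 4.14}; once that is certified by Proposition \ref{MVW 23.5}, the remainder is a direct transcription of the proof in \cite[Lect.~24]{MVW}, with the exactness of the Gersten sequence following from spectral sequence degeneration together with the identification of local cohomology with the values of the contractions on residue fields.
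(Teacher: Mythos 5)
Your proposal is correct and follows the route the paper intends: the paper's proof is simply a citation of the argument in \cite[Lect.~24]{MVW} with the paper's analogues of the required lemmas substituted, and you have written out exactly that transcription---form the Cousin complex, show the local cohomology $H^q_x$ vanishes for $q \neq p$ and equals $\cal{F}_{-p}(\kappa(x))$ for $q = p$, and run the induction via the purity isomorphism of Theorem \ref{V 4.14}/Corollary \ref{MVW 23.12}, using Proposition \ref{MVW 23.5} to carry the standing hypotheses through the contractions. The only cosmetic difference is that the paper's remark lists Corollary \ref{V 4.15} (Zariski) where you invoke Proposition \ref{MVW 22.7} (Nisnevich); these are the two versions of the same computation on open subschemes of $\A$ and either suffices in conjunction with Corollary \ref{Zar = Nis}.
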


\appendix
\counterwithin{theorem}{section}   

\section{Trivializing normal bundles} 

We collect some basic lemmas about normal bundles.  Let $k$ be a field.  We say a morphism of smooth $k$-schemes $f : X \to Y$ \textit{admits an $N$-trivial embedding} if there is a closed embedding of $Y$-schemes $X \into Y \times \mathbb{A}^n$ such that the normal bundle $N_X(Y \times \mathbb{A}^n)$ to $X$ in $Y \times \mathbb{A}^n$ is trivial.  The existence of any embedding $X \into Y \times \mathbb{A}^n$ implies $f$ is affine.  If $f$ is not itself an embedding, the existence of an embedding $X \into Y \times \AAA^n$ implies $X$ admits a nonconstant regular function.  If $f$ is \'{e}tale, then any embedding $X \into Y \times \mathbb{A}^n$ is $N$-trivial.

\begin{lemma} \label{N trivial no kahler} Let $f : X \to Y$ be a morphism in $\Sm / k$ such that $\Omega^1_{X/k}$ and $\Omega^1_{Y/k}$ are trivial.  Then $f$ admits an $N$-trivial embedding.
\end{lemma}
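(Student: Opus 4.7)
The plan is to start with an arbitrary $Y$-embedding of $X$ into some $Y \times \mathbb{A}^N$ and then stabilize in the extra factor to trivialize the conormal bundle. The triviality of $\Omega^1_{X/k}$ will let me split the conormal sequence, and the triviality of $\Omega^1_{Y/k}$ will force the conormal bundle to be stably free, so that only a stabilization is needed to turn stable triviality into honest triviality.

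First I would produce any closed $Y$-embedding $i\colon X \hookrightarrow Y \times \mathbb{A}^N$. In the applications of interest in this paper $X$ and $Y$ are affine, so such an $i$ exists by combining $f$ with a closed embedding $X \hookrightarrow \mathbb{A}^N$ arising from a generating set of $\Gamma(X, \cal{O}_X)$. Being a closed embedding of smooth $k$-schemes, $i$ is a regular embedding, so the conormal sheaf $N^\vee_i$ is a locally free $\cal{O}_X$-module of rank $\dim Y + N - \dim X$ fitting into the conormal exact sequence
$$0 \to N^\vee_i \to i^*\Omega^1_{Y \times \mathbb{A}^N/k} \to \Omega^1_{X/k} \to 0.$$
Since $\Omega^1_{X/k}$ is assumed free, this surjection splits globally, yielding an isomorphism
$$N^\vee_i \oplus \Omega^1_{X/k} \cong f^*\Omega^1_{Y/k} \oplus \cal{O}_X^N.$$

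Writing $d = \dim X$ and $e = \dim Y$ and invoking the triviality of $\Omega^1_{X/k}$ and $\Omega^1_{Y/k}$, the above becomes
$$N^\vee_i \oplus \cal{O}_X^d \cong \cal{O}_X^{e+N}.$$
Thus $N^\vee_i$ is stably free. To upgrade this to honest freeness, I would stabilize the embedding: compose $i$ with the closed immersion $Y \times \mathbb{A}^N \hookrightarrow Y \times \mathbb{A}^{N+d}$ sending $(y,v) \mapsto (y,v,0)$. A direct ideal-sheaf computation shows that the conormal bundle of the composite embedding is $N^\vee_i \oplus \cal{O}_X^d$, which by the displayed identity is $\cal{O}_X^{e+N}$, hence trivial. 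This provides the required $N$-trivial embedding of $f$.

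The only step requiring any care is producing the initial closed embedding $X \hookrightarrow Y \times \mathbb{A}^N$, which implicitly needs $f$ to be affine (compatible with the affine setting in which the lemma will be applied); once that is in hand, the remaining argument is a formal consequence of the splitting of the conormal sequence together with the standard stabilization trick for stably free bundles.
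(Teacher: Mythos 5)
Your proof is correct and takes essentially the same route as the paper's: pick any $Y$-embedding, use the conormal exact sequence and triviality of $\Omega^1_{X/k}$ to split it, conclude $N^\vee_i$ is stably free, then stabilize by adding constant coordinates. Your remark that one only needs $\Omega^1_{X/k}$ projective (not both sheaves trivial) to split the surjection, and your note on when the initial affine embedding exists, are small improvements in exposition but do not change the argument.
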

\begin{proof} For any embedding $i : X \to Y \times \bb{A}^n$ with ideal sheaf $\cal{I}$, in the exact sequence
$$0 \to {\cal{I} / {\cal{I}}^2} \to i^\ast \Omega^1_{Y \times \mathbb{A}^n /k} \to\Omega^1_{X/k}  \to 0$$
there is a section $\Omega^1_{X/k} \to i^\ast \Omega^1_{Y \times \mathbb{A}^n /k }$ since both of these bundles are trivial.  Hence $N_X(Y \times \bb{A}^n) \oplus \cal{O}^{\dim X} \cong \cal{O}^{n + \dim Y}$.  Therefore in any embedding $X \stackrel{i}{\rightarrow} Y \times \bb{A}^n \into Y \times \bb{A}^{n + \dim X}$ induced by a constant map $X \to \bb{A}^{\dim X}$, the normal bundle is trivial.
\end{proof}

\begin{lemma} \label{N trivial A1 stable}Let $f: X \to Y$ be a morphism in $\Sm / k$ that admits an $N$-trivial embedding.  Then any morphism of $Y$-schemes $X \to \A \times Y$ admits an $N$-trivial embedding.
\end{lemma}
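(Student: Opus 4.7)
The plan is to build the $N$-trivial embedding of the given $Y$-morphism $g: X \to \A \times Y$ directly from a fixed $N$-trivial embedding $i = (f, \beta) : X \hookrightarrow Y \times \bb{A}^n$ of $f$, using the extra $\A$-coordinate supplied by $g$. Writing $g = (g_1, f)$ with $g_1 : X \to \A$, my candidate is
\[ j := (g_1, f, \beta) : X \to \A \times Y \times \bb{A}^n . \]
The composition of $j$ with the projection onto the first two factors is $g$, so $j$ is by construction an embedding of $(\A \times Y)$-schemes realizing $g$.

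First I would verify that $j$ is a closed immersion by factoring
\[ X \xrightarrow{\Gamma_{g_1}} X \times \A \xrightarrow{i \times \id_\A} Y \times \bb{A}^n \times \A \xrightarrow{\sim} \A \times Y \times \bb{A}^n , \]
in which $\Gamma_{g_1}$ is the graph of $g_1$ (closed since $\A$ is $k$-smooth), $i \times \id_\A$ is the product of a closed immersion with the identity, and the final map is a coordinate permutation; each step is a closed immersion.

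Next I would compute $N_j$ via the short exact sequence for a composition of closed immersions,
\[ 0 \to N_{\Gamma_{g_1}} \to N_j \to \Gamma_{g_1}^{*}\, N_{i \times \id_\A} \to 0 . \]
Here $N_{\Gamma_{g_1}} \cong g_1^{*} T_\A = \cal{O}_X$ (the normal bundle of a graph in a product with a smooth factor) and $N_{i \times \id_\A} \cong p_1^{*} N_i$, whence $\Gamma_{g_1}^{*}\, N_{i \times \id_\A} \cong N_i$. Using triviality of $N_i$ this becomes a short exact sequence of vector bundles with trivial sub and quotient,
\[ 0 \to \cal{O}_X \to N_j \to \cal{O}_X^{n + \dim Y - \dim X} \to 0 . \]

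The hard part will be splitting this extension to conclude that $N_j$ is trivial. Dually, a splitting of $0 \to N_i^\vee \to N_j^\vee \to \cal{O}_X \to 0$ is furnished, for any local extension $\widetilde{g_1}$ of $g_1$ across $i$, by the class of $t - \widetilde{g_1} \in \cal{I}_j / \cal{I}_j^{2}$ (with $t$ the $\A$-coordinate on $\A \times Y \times \bb{A}^n$); this class is canonical modulo $N_i^\vee$, and the obstruction to globalizing lies in $\Ext^{1}(\cal{O}_X, N_i^\vee) \cong H^{1}(X, \cal{O}_X)^{\mathrm{rk}\, N_i}$, which vanishes in the affine settings in which the lemma is applied elsewhere in the paper. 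Hence $N_j \cong N_i \oplus \cal{O}_X$ is trivial of rank $n + 1 + \dim Y - \dim X$, and $j$ is the desired $N$-trivial embedding of $g$.
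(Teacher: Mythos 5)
Your candidate embedding $j = (g_1, f, \beta)$ is exactly the paper's $(f, \alpha, \beta)$ with the $\A$-coordinate relabelled, and your factorization of $j$ through the graph $X \hookrightarrow X \times \A$ produces precisely the short exact sequence $0 \to \cal{O}_X \to N_j \to N_i \to 0$, which is dual to the left column $0 \to N^\vee_{f,\beta} \to N^\vee_{f,\alpha,\beta} \to Q \to 0$ of the paper's exact square. Up to this point the two arguments coincide in substance, and your route via the composition of closed immersions is a perfectly good way to obtain the sequence.

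The divergence is in arguing the splitting, and here your proof has a genuine gap. You correctly observe that a section of $N^\vee_j \twoheadrightarrow \cal{O}_X$ is furnished locally by the class of $t - \widetilde{g_1}$ for a local extension $\widetilde{g_1}$ of $g_1$ across $i$, and that the obstruction to globalizing lies in $\Ext^1(\cal{O}_X, N^\vee_i) \cong H^1(X, \cal{O}_X)^{\operatorname{rk}\, N_i}$. But you then only claim this vanishes ``in the affine settings in which the lemma is applied elsewhere in the paper.'' The lemma is stated for arbitrary $X, Y \in \Sm/k$, and $H^1(X, \cal{O}_X)$ does not vanish in general, so as a proof of the stated lemma this is incomplete. (It is true that the paper only invokes Lemma~A.2 with $X$ affine, in the proof of the semilocal effaceability theorem, so your argument covers the paper's actual uses.) The paper, by contrast, asserts the splitting directly by claiming that the projection of $N^\vee_{f,\alpha,\beta}$ onto the $f^\ast \Omega^1_{Y/k} \oplus \beta^\ast \Omega^1_{\bb{A}^n/k}$ factor lands in $N^\vee_{f,\beta}$; that assertion is the step you are missing, and it is itself delicate --- an element $(a,b,c)$ with $a+b+c=0$ in $\Omega^1_{X/k}$ projects to $(a,c)$, whose image in $\Omega^1_{X/k}$ is $-b$, not manifestly $0$, so one needs exactly the kind of global correction by a lift of $dg_1$ whose existence is the $H^1$ obstruction you identified. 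You should either supply an argument for general $X$, or record the affine hypothesis explicitly.
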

\begin{proof} Suppose $i = (f, \beta) : X \into Y \times \bb{A}^n$ is an $N$-trivial embedding, and $(f, \alpha) : X \to Y \times \A$ is the given morphism.  Then we claim $(f, \alpha, \beta) : X \into Y \times \A \times  \bb{A}^n$ is $N$-trivial.   We denote by ${N}^\vee_{(-)}$ the conormal bundle of the embedding $(-)$.  We have an exact square:

$$\xym{{N}^\vee_{f, \beta} \ar[r] \ar[d] & f^\ast \Omega^1_{Y/k} \oplus \beta^\ast \Omega^1_{\bb{A}^n /k} \ar[r] \ar[d] & \Omega^1_{X/k} \ar[d]_-= \\
{N}^\vee_{f, \alpha, \beta} \ar[d] \ar[r] & f^\ast \Omega^1_{Y/k} \oplus \alpha^\ast \Omega^1_{\bb{A}^1 /k} \oplus \beta^\ast \Omega^1_{\bb{A}^n / k}\ar[r] \ar[d] & \Omega^1_{X/k}  \ar[d] \\
Q \ar[r]^-\cong &  \alpha^\ast \Omega^1_{\bb{A}^1 /k}  \ar[r] & 0 \\ }$$

The projection ${N}^\vee_{f, \alpha, \beta} \to  f^\ast \Omega^1_{Y/k} \oplus \beta^\ast \Omega^1_{\bb{A}^n /k}$ vanishes in $\Omega^1_{X/k}$, hence factors through ${N}^\vee_{f, \beta}$, so the left hand column is split exact.  Now ${N}^\vee_{f, \beta}$ is trivial by hypothesis, and $\Omega^1_{\bb{A}^1 /k}$ is trivial, hence ${N}^\vee_{f, \alpha, \beta}$ is trivial. \end{proof}

\begin{lemma} \label{N trivial smooth BC}
Let $f: X \to Y$ be a morphism in $\Sm / k$ that admits an $N$-trivial embedding.  Let $Y' \to Y$ be a flat morphism.  Then $f': X' \to Y'$ admits an $N$-trivial embedding.
\end{lemma}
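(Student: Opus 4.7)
The plan is to take an $N$-trivial embedding of $f$ and simply base change it. Fix an $N$-trivial embedding $i = (f,\beta) : X \hookrightarrow Y \times \mathbb{A}^n$, so that the conormal sheaf $N^\vee_i := N^\vee_X(Y \times \mathbb{A}^n)$ comes equipped with a trivialization $\psi : N^\vee_i \xrightarrow{\sim} \cal{O}_X^n$. Form the cartesian square
$$\xym{ X' \ar[r]^-{i'} \ar[d]_{g} & Y' \times \mathbb{A}^n \ar[d] \\ X \ar[r]^-{i} & Y \times \mathbb{A}^n }$$
by base change along the flat morphism $Y' \times \mathbb{A}^n \to Y \times \mathbb{A}^n$. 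Then $i' = (f', g\circ\beta)$ is a closed immersion of $Y'$-schemes, so it is an embedding candidate for $f' : X' \to Y'$.

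Next I would verify that the conormal sheaf of $i'$ is trivial. Let $\cal{I} \subset \cal{O}_{Y \times \mathbb{A}^n}$ denote the ideal of $X$ and $\cal{I}'$ the ideal of $X'$ in $Y' \times \mathbb{A}^n$. Since the base change is flat, $\cal{I}'$ equals the pullback $\cal{I} \cdot \cal{O}_{Y' \times \mathbb{A}^n}$, and flatness of $\cal{O}_{Y' \times \mathbb{A}^n}$ over $\cal{O}_{Y \times \mathbb{A}^n}$ identifies this with $\cal{I} \otimes_{\cal{O}_{Y \times \mathbb{A}^n}} \cal{O}_{Y' \times \mathbb{A}^n}$. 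Quotienting by the square, one obtains a canonical isomorphism $N^\vee_{i'} \cong g^* N^\vee_i$. Pulling back $\psi$ along $g$ then yields a trivialization $g^*\psi : N^\vee_{i'} \xrightarrow{\sim} \cal{O}_{X'}^n$, so $i'$ is an $N$-trivial embedding of $f'$.

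There is no serious obstacle; the only thing to check carefully is the compatibility of the conormal sheaf with flat base change, which is the standard statement that for a closed immersion the formation of $\cal{I}/\cal{I}^2$ commutes with flat pullback of the ambient scheme. (In the applications, e.g.\ in the proof of Theorem \ref{semilocal efface}, the morphism $Y' \to Y$ will actually be smooth, so $X'$ and $Y'$ automatically lie in $\Sm/k$ and the data produced above is an $N$-trivial embedding in the sense of Definition \ref{PSOWT}.)
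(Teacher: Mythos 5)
Your proof is correct and takes essentially the same approach as the paper: base change the $N$-trivial embedding along the flat map and observe that the (co)normal sheaf pulls back. The paper simply cites \cite[B.7.4]{Ful} for the compatibility of normal bundles with flat base change, whereas you spell out the underlying commutative algebra ($\cal{I}' = \cal{I} \otimes \cal{O}_{Y'\times\bb{A}^n}$ by flatness, hence $N^\vee_{i'} \cong g^*N^\vee_i$); the content is the same.
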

\begin{proof} By \cite[B.7.4]{Ful}, the normal bundle to $X'$ in $Y' \times \bb{A}^n$ is simply the pull-back of the normal bundle to $X$ in $Y \times \bb{A}^n$. \end{proof}

\begin{corollary} \label{prin divisor}
Let $f: X \to Y$ be a flat morphism in $\Sm / k$, $X \to Y \times \bb{A}^n$ an $N$-trivial closed embedding (of $Y$-schemes), and $Y' \into Y$ a closed immersion in $\Sm / k$.  Suppose that $X' =X \cap (Y' \times \bb{A}^n)$ is smooth.  Then $X' \into Y' \times \bb{A}^n$ is an $N$-trivial embedding (of $Y'$-schemes).
\end{corollary}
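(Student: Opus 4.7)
The plan is to identify the conormal sheaf $N^\vee_{X'}(Y' \times \bb{A}^n)$ with the restriction of $N^\vee_X(Y \times \bb{A}^n)$ to $X'$, from which triviality will follow immediately. This is the standard transversality computation for conormal sheaves, and the role of the flatness hypothesis on $f$ is to guarantee the requisite transversality: without it the intersection $X \cap (Y' \times \bb{A}^n)$ could fail to have the expected codimension and the identification on conormal sheaves would break.

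First I would observe that since $X \into Y \times \bb{A}^n$ is a morphism of $Y$-schemes, the intersection $X'$ coincides with the fiber product $X \times_Y Y'$, so $X' \to Y'$ is flat as well. Let $J$ and $I$ denote the ideal sheaves of $X$ and of $Y' \times \bb{A}^n$ inside $Y \times \bb{A}^n$, respectively; the ideal of $X'$ in $Y' \times \bb{A}^n$ is then $(J+I)/I$, which is generated by the image of $J$. Consequently there is a canonical surjection of sheaves on $X'$:
$$N^\vee_X(Y \times \bb{A}^n)|_{X'} \;=\; J/J^2 \otimes_{\cal{O}_X} \cal{O}_{X'} \;\twoheadrightarrow\; N^\vee_{X'}(Y' \times \bb{A}^n).$$

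The next step is to verify both sides are locally free of the same rank on $X'$, so the surjection is forced to be an isomorphism. Local freeness is immediate from the smoothness of the four schemes $X$, $Y \times \bb{A}^n$, $X'$, $Y' \times \bb{A}^n$. For the rank equality, at any $x' \in X'$ with image $y' = f(x')$, the flatness of $f$ and of its base change $X' \to Y'$ yields $\dim_{x'} X - \dim_{y'} Y = \dim_{x'} X' - \dim_{y'} Y'$, which rearranges to $\codim_{x'}(X, Y \times \bb{A}^n) = \codim_{x'}(X', Y' \times \bb{A}^n)$. Hence the displayed surjection is an isomorphism, and the trivialization of $N^\vee_X(Y \times \bb{A}^n)$ restricts to one of $N^\vee_{X'}(Y' \times \bb{A}^n)$.

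The argument is entirely formal once the identification $X' = X \times_Y Y'$ is in hand, so I do not anticipate any serious obstacle; the content of the flatness hypothesis on $f$ is captured precisely by the pointwise rank equality above, which is all that the conormal sheaf comparison requires.
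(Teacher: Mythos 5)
Your proof is correct and reaches the same key conclusion as the paper — namely, that $N^\vee_{X'}(Y' \times \bb{A}^n)$ is canonically identified with $N^\vee_X(Y \times \bb{A}^n)|_{X'}$ — but you get there by a genuinely different and somewhat more elementary route. The paper first observes that flatness of $f$ makes $X$ and $Y' \times \bb{A}^n$ intersect properly inside $Y \times \bb{A}^n$, then invokes the direct-sum decomposition of $N_{X'}(Y \times \bb{A}^n)$ for a transverse intersection (Fulton B.7.4), and compares this with the standard normal-bundle exact sequence for the chain $X' \subset Y' \times \bb{A}^n \subset Y \times \bb{A}^n$; splitting the latter against the former extracts the desired identification. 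You instead build the canonical surjection of conormal sheaves $J/J^2 \otimes_{\cal{O}_X} \cal{O}_{X'} \twoheadrightarrow (J+I)/\bigl((J+I)^2+I\bigr)$ directly from the inclusion of ideal sheaves, and then force it to be an isomorphism by a rank count: local freeness comes from smoothness of all four schemes, and the rank equality comes from the dimension formula for the flat morphism $f$ and its base change. This bypasses Fulton B.7.4 entirely and makes the role of flatness very transparent — it enters exactly through the fiber-dimension formula, which is the rank-count form of the paper's ``proper intersection.'' Both arguments are essentially the same length; the paper's is more consonant with the intersection-theoretic toolkit used elsewhere in the appendix (B.7.4 is also cited in Lemma A.3), while yours is more self-contained.
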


\begin{proof} Note the flatness of $f$ implies $X$ and $Y' \times \bb{A}^n$ intersect properly in $Y \times \bb{A}^n$.
The embedding $X'  \into Y' \times \bb{A}^n$ is regular, hence so is the embedding $X' \into Y \times \bb{A}^n$.  Then \cite[B.7.4]{Ful} asserts that we have an exact sequence:
$$N_{X'}(Y \times \bb{A}^n) = N_{X}(Y \times \bb{A}^n) \arrowvert_{X'} \oplus N_{Y' \times \bb{A}^n}(Y \times \bb{A}^n) \arrowvert_{X'}.$$

We have also the exact sequence:
$$0 \to N_{X'}(Y' \times \bb{A}^n) \to N_{X'}(Y \times \bb{A}^n) \to N_{Y' \times \bb{A}^n}(Y \times \bb{A}^n) \arrowvert_{X'} \to 0.$$
Therefore this last sequence is split exact and $N_{X'}(Y' \times \bb{A}^n) = N_{X}(Y \times \bb{A}^n) \arrowvert_{X'} $.   \end{proof}

If a morphism of $k$-schemes $f: X \to Y$ admits an $N$-trivial embedding $X \into Y \times \bb{A}^n$, then we say a morphism $g: Y' \to Y$ is transversal to $f$ if the canonical morphism $N_{X'} (Y' \times \bb{A}^n) \to g'^* (N_X(Y \times \bb{A}^n))$ is an isomorphism.  This is independent of the choice of $N$-trivial embedding since it is equivalent to the canonical morphism $\Omega^1_{Y'/Y} |_{X'} \to \Omega^1_{X'/X}$ being an isomorphism.  In the application, we require the weak transfers along a finite flat morphism $f : X \to Y$ in $\Sm / k$, whose definition involves the choice of an $N$-trivial embedding, to be compatible with certain types of base change.  By Corollary \ref{prin divisor}, any base change $g : Y' \to Y$ is transversal to such an $f$.  We only needed compatibility with base change by smooth morphisms, and by inclusions of a smooth divisor.

\bibliography{tspecreferences}{}

\begin{thebibliography}{10}

\bibitem{Deglise}
Fr{\'e}d{\'e}ric D{\'e}glise.
\newblock Orientable homotopy modules.
\newblock {\em Amer. J. Math.}, 135(2):519--560, 2013.

\bibitem{HDbertini}
Steven Diaz and David Harbater.
\newblock Strong {B}ertini theorems.
\newblock {\em Trans. Amer. Math. Soc.}, 324(1):73--86, 1991.

\bibitem{FS}
Eric~M. Friedlander and Andrei Suslin.
\newblock The spectral sequence relating algebraic {$K$}-theory to motivic
  cohomology.
\newblock {\em Ann. Sci. \'Ecole Norm. Sup. (4)}, 35(6):773--875, 2002.

\bibitem{BCC}
Eric~M. Friedlander and Vladimir Voevodsky.
\newblock Bivariant cycle cohomology.
\newblock In {\em Cycles, transfers, and motivic homology theories}, volume 143
  of {\em Ann. of Math. Stud.}, pages 138--187. Princeton Univ. Press,
  Princeton, NJ, 2000.

\bibitem{Ful}
William Fulton.
\newblock {\em Intersection theory}, volume~2 of {\em Ergebnisse der Mathematik
  und ihrer Grenzgebiete. 3. Folge. A Series of Modern Surveys in Mathematics
  [Results in Mathematics and Related Areas. 3rd Series. A Series of Modern
  Surveys in Mathematics]}.
\newblock Springer-Verlag, Berlin, second edition, 1998.

\bibitem{Hart}
Robin Hartshorne.
\newblock {\em Algebraic geometry}.
\newblock Springer-Verlag, New York, 1977.
\newblock Graduate Texts in Mathematics, No. 52.

\bibitem{HOV}
Jeremiah Heller, Paul~Arne {\O}stv{\ae}r, and Mircea Voineagu.
\newblock Equivariant cycles and cancellation for motivic cohomology, {\tt
  http://arxiv.org/abs/1304.5867v2}.
\newblock 2013.

\bibitem{HornYag}
Jens Hornbostel and Serge Yagunov.
\newblock Rigidity for {H}enselian local rings and {$\Bbb A^1$}-representable
  theories.
\newblock {\em Math. Z.}, 255(2):437--449, 2007.

\bibitem{IR}
Florian Ivorra and Kay R\"ulling.
\newblock K-groups of reciprocity functors, {\tt
  http://arxiv.org/abs/1209.1217v3}.
\newblock 2014.

\bibitem{KRec}
Bruno Kahn.
\newblock Foncteurs de {M}ackey \`a r\'eciprocit\'e, {\tt
  http://arxiv.org/abs/1210.7577}.
\newblock 2012.

\bibitem{KSY}
Bruno Kahn, Shuji Saito, and Takao Yamazaki.
\newblock Reciprocity sheaves, {I}, {\tt http://arxiv.org/abs/1402.4201}, with
  two appendices by {K}ay {R}\"ulling.
\newblock 2014.

\bibitem{Kelly}
Shane Kelly.
\newblock Triangulated categories of motives in positive characteristic, {\tt
  http://arxiv.org/abs/1305.5349v1}.
\newblock 2013.

\bibitem{MLChow}
Marc Levine.
\newblock Chow's moving lemma and the homotopy coniveau tower.
\newblock {\em $K$-Theory}, 37(1-2):129--209, 2006.

\bibitem{ST}
Marc Levine.
\newblock Slices and transfers.
\newblock {\em Doc. Math.}, (Extra volume: Andrei A. Suslin sixtieth
  birthday):393--443, 2010.

\bibitem{MVW}
Carlo Mazza, Vladimir Voevodsky, and Charles Weibel.
\newblock {\em Lecture notes on motivic cohomology}, volume~2 of {\em Clay
  Mathematics Monographs}.
\newblock American Mathematical Society, Providence, RI, 2006.

\bibitem{MorelBook}
Fabien Morel.
\newblock {\em {$\Bbb A^1$}-algebraic topology over a field}, volume 2052 of
  {\em Lecture Notes in Mathematics}.
\newblock Springer, Heidelberg, 2012.

\bibitem{MV}
Fabien Morel and Vladimir Voevodsky.
\newblock {${\bf A}^1$}-homotopy theory of schemes.
\newblock {\em Inst. Hautes \'Etudes Sci. Publ. Math.}, (90):45--143 (2001),
  1999.

\bibitem{PO}
Manuel Ojanguren and Ivan Panin.
\newblock Rationally trivial {H}ermitian spaces are locally trivial.
\newblock {\em Math. Z.}, 237(1):181--198, 2001.

\bibitem{Panin}
I.~Panin.
\newblock Oriented cohomology theories of algebraic varieties.
\newblock {\em $K$-Theory}, 30(3):265--314, 2003.
\newblock Special issue in honor of Hyman Bass on his seventieth birthday. Part
  III.

\bibitem{PanWNis}
I.~Panin.
\newblock Homotopy invariance of the sheaf {$W_{\rm Nis}$} and of its
  cohomology.
\newblock In {\em Quadratic forms, linear algebraic groups, and cohomology},
  volume~18 of {\em Dev. Math.}, pages 325--335. Springer, New York, 2010.

\bibitem{PanYagRigid}
Ivan Panin and Serge Yagunov.
\newblock Rigidity for orientable functors.
\newblock {\em J. Pure Appl. Algebra}, 172(1):49--77, 2002.

\bibitem{PanZBO}
Ivan Panin and Kirill Zainoulline.
\newblock Variations on the {B}loch-{O}gus theorem.
\newblock {\em Doc. Math.}, 8:51--67 (electronic), 2003.

\bibitem{Q}
Daniel Quillen.
\newblock Higher algebraic {$K$}-theory. {I}.
\newblock In {\em Algebraic {$K$}-theory, {I}: {H}igher {$K$}-theories ({P}roc.
  {C}onf., {B}attelle {M}emorial {I}nst., {S}eattle, {W}ash., 1972)}, pages
  85--147. Lecture Notes in Math., Vol. 341. Springer, Berlin, 1973.

\bibitem{CTS}
Joseph Ross.
\newblock Cohomology theories with supports.
\newblock {\em International Mathematics Research Notices}, 2014.

\bibitem{MSrelns}
Markus Spitzweck.
\newblock Relations between slices and quotients of the algebraic cobordism
  spectrum.
\newblock {\em Homology, Homotopy Appl.}, 12(2):335--351, 2010.

\bibitem{SV}
Andrei Suslin and Vladimir Voevodsky.
\newblock Singular homology of abstract algebraic varieties.
\newblock {\em Invent. Math.}, 123(1):61--94, 1996.

\bibitem{CTPST}
Vladimir Voevodsky.
\newblock Cohomological theory of presheaves with transfers.
\newblock In {\em Cycles, transfers, and motivic homology theories}, volume 143
  of {\em Ann. of Math. Stud.}, pages 87--137. Princeton Univ. Press,
  Princeton, NJ, 2000.

\bibitem{TRICAT}
Vladimir Voevodsky.
\newblock Triangulated categories of motives over a field.
\newblock In {\em Cycles, transfers, and motivic homology theories}, volume 143
  of {\em Ann. of Math. Stud.}, pages 188--238. Princeton Univ. Press,
  Princeton, NJ, 2000.

\bibitem{WalkerThesis}
Mark~Edward Walker.
\newblock {\em Motivic complexes and the {K}-theory of automorphisms}.
\newblock ProQuest LLC, Ann Arbor, MI, 1996.
\newblock Thesis (Ph.D.)--University of Illinois at Urbana-Champaign.

\bibitem{YagRigid}
Serge Yagunov.
\newblock Rigidity. {II}. {N}on-orientable case.
\newblock {\em Doc. Math.}, 9:29--40, 2004.

\end{thebibliography}
\bibliographystyle{plain}

\end{document}